\documentclass[a4paper,12pt]{amsart}
\usepackage[utf8]{inputenc}
\usepackage[english]{babel}
\usepackage{a4wide,soul}
\usepackage{tikz-cd}
\usepackage{amsthm, amssymb, amsmath}
\usepackage{enumitem}
\usepackage{xcolor}

\usepackage[cmtip,arrow]{xy}
\usepackage{pb-diagram, pb-xy}

\usepackage{etoolbox}
\apptocmd{\sloppy}{\hbadness 10000\relax}{}{}

\newtheorem{theorem}{Theorem}[section]
\newtheorem{lemma}[theorem]{Lemma}
\newtheorem{proposition}[theorem]{Proposition}
\newtheorem{corollary}[theorem]{Corollary}

\theoremstyle{definition}
\newtheorem{definition}[theorem]{Definition}
\newtheorem{example}[theorem]{Example}
\theoremstyle{remark}
\newtheorem{remark}[theorem]{Remark}
\newtheorem{fact}[theorem]{Fact}

\newcommand{\CC}{\mathbb{C}}
\newcommand{\NN}{\mathbb{N}}
\newcommand{\ZZ}{\mathbb{Z}}
\newcommand{\RR}{\mathbb{R}}

\newcommand{\A}{{\mathbb A}}

\newcommand{\C}{{\mathbb C}}
\newcommand{\N}{{\mathbb N}}

\newcommand{\Z}{{\mathbb Z}}

\renewcommand{\O}{{\mathcal O}}

\newcommand{\aut}{\mathop{\mathrm{Aut}}}

\newcommand{\Olo}{\mathcal{O}}

\newcommand{\rank}{\mathop{\mathrm{rank}}}
\newcommand{\saut}{\mathop{\mathrm{SAut}}}
\newcommand{\Aut}{\mathop{\mathrm{Aut}}}

\newcommand\dd[1]{\frac{\partial}{\partial #1}}
\newcommand{\tensor}{\otimes}

\newcommand{\Sc}{{\mathcal S}}
\newcommand{\Ks}{{\mathcal K}}

\newcommand{\sP}{{\mathcal P}}
\newcommand{\sL}{{\mathcal L}}
\newcommand{\sV}{{\mathcal V}}
\newcommand{\sH}{{\mathcal H}}
\newcommand{\sE}{{\mathcal E}}

\setstcolor{red}
\makeatletter
\@namedef{subjclassname@2020}{%
  \textup{2020} Mathematics Subject Classification}
\makeatother

\begin{document}
\author{Riccardo~Ugolini}
\address{Riccardo~Ugolini \\ Faculty of Mathematics \\ Ruhr University Bochum \\ Germany}
\email{riccardo.ugolini@ruhr-uni-bochum.de}

\author{J\"org Winkelmann}
\address{J\"org Winkelmann\\ Faculty of Mathematics \\ Ruhr University Bochum \\ Germany}
\email{joerg.winkelmann@rub.de}

\subjclass[2020]{Primary: 32M18; Secondary: 32Q28, 32M25}

\keywords{density property, Stein manifold, flexibility, complex automorphism}

\title{The density property for vector bundles}

\begin{abstract}
We prove that holomorphic vector bundles over Stein manifolds with the density property also satisfy the density property, provided that the total space is holomorphically flexible. We apply this result to provide a new class of Stein manifolds with the density property.
\end{abstract}

\maketitle
\tableofcontents

\section{Introduction}

The class of Stein manifolds with the density property is at the center of the Andersén-Lempert Theory. The latter began when Andersén and Lempert first described the group of holomorphic automorphisms of complex affine spaces $\C^n, \ n>1$ \cite{AndersenLempert1992}. Later, Varolin generalized their ideas and introduced the density property:

\begin{definition}[\cite{Varolin2001}]
A complex manifold $X$ has the \textit{density property} if the Lie algebra generated by {complete} holomorphic vector fields (those whose flows are defined for all complex times) is dense in the Lie algebra of all holomorphic vector fields in the compact-open topology.
\end{definition}

Note that the flow of a complete vector field at a fixed time $t \in \C$ defines an automorphism of $X$. Under the additional condition that $X$ is Stein, the density property hence guarantees the existence of a large number of automorphisms. This has many striking consequences; we refer the interested reader to the monograph \cite{ForstnericBook} or the more recent survey \cite{ALSurvey}.

Our main result is the following:
\begin{theorem}[Main theorem] \label{main}
  Let  $X$ be a Stein manifold with density property, $\pi:E\to X$ a holomorphic
  vector bundle.

  Assume that there exists  an automorphism
  $\phi$ of the total space $E$ and points $p,q\in E$ such that
  $\pi(p)=\pi(q)$, but $\pi(\phi(p))\ne\pi(\phi(q))$.

 Then $E$ satisfies the density property.
\end{theorem}

For the proof, we distinguish two cases: $\rank(E)\ge 2$ is
  dealt with in Theorem~\ref{mainrank2} and Theorem
  \ref{mainrank1} takes care of the case $\rank(E)=1$.
  
The first application of Theorem \ref{main} concerns a specific class of {\em flexible} manifolds.

\begin{definition}
  A complex algebraic manifold $X$ is flexible at a point $p \in X$ if the tangent space $T_p X$ is spanned by the tangent vectors to the orbits $Hp$ of
  unipotent one-parameter algebraic subgroups $H \subset \aut_{alg}(X)$ of the group of algebraic automorphisms. We say that $X$ is flexible if it is flexible at all of its points.
\end{definition}

These one-parameter subgroups induce algebraic
actions by the additive group of $\C$, i.e., they correspond to
{\em locally nilpotent derivations} (LNDs). In other words: $X$ is flexible
at $p$ if for every $v\in T_pX$
there exists an algebraic vector field $\Theta$ such that the
induced derivation $\theta$ of the function ring $\C[X]$ satisfies
\[
\forall f\in \C[X]:\exists n:
\underbrace{\theta(\ldots(\theta(}_{\text{$n$-times}}f))\ldots )=0
\]
These LNDs proved useful in many aspects of the Andersén-Lempert theory; more information can be found in the monograph \cite{LNDBook}.

Denoting by $\saut(X)$ the group generated by all unipotent
one-parameter subgroups of $\aut_{alg}(X)$,
the main result about flexible affine manifolds is the following:

\begin{theorem} \cite[Theorem 0.1]{FlexibleSurvey} \label{flexibilitythm}
Let $X$ be an affine algebraic manifold of dimension greater than one. Then, the following are equivalent:
\begin{enumerate}
\item $X$ is flexible
\item the group $\saut(X)$ acts transitively
\item the group $\saut(X)$ acts $n$-transitively for every $n \in \N$
\end{enumerate}
\end{theorem}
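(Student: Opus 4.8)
The plan is to establish the cycle of implications $(3)\Rightarrow(2)\Rightarrow(1)\Rightarrow(3)$ (referring to the three items in the order listed), with essentially all of the work going into the last one. The implication $(3)\Rightarrow(2)$ is immediate, since $n$-transitivity for every $n$ contains the case $n=1$. For $(2)\Rightarrow(1)$ I would consider the \emph{flexible locus}
\[
U=\{\, p\in X:\ T_pX\ \text{is spanned by the values at}\ p\ \text{of the vector fields attached to LNDs of}\ \C[X]\,\}.
\]
This set is Zariski open (finitely many LNDs whose values span $T_pX$ at a point span $T_qX$ for $q$ nearby) and invariant under $\saut(X)$ (for $g\in\saut(X)$ the subgroup $gHg^{-1}$ is again a one-parameter unipotent algebraic subgroup, so $dg_p$ carries the span at $p$ onto the span at $gp$); hence, once $U\ne\emptyset$, transitivity forces $U=X$, which is $(1)$. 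To see $U\ne\emptyset$, for a tuple $\vec H=(H_1,\dots,H_m)$ of one-parameter unipotent subgroups consider the orbit map $\phi_{\vec H}\colon H_1\times\dots\times H_m\to X$, $\phi_{\vec H}(t_1,\dots,t_m)=h_1(t_1)\cdots h_m(t_m)\cdot p$. Its image is constructible with irreducible closure, concatenating tuples only enlarges these closures, and since $X$ is irreducible (being a connected smooth variety) and of finite dimension, transitivity lets one strictly enlarge any proper closure, so after finitely many steps some $\phi_{\vec H}$ is dominant. Then $d\phi_{\vec H}$ is surjective at a general point, and each $d\phi_{\vec H}(\partial/\partial t_i)$ is the value of the push-forward by an element of $\saut(X)$ of a velocity field of a $\mathbb{G}_a$-action, hence lies in the span defining $U$; thus $U$ is dense.

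The substance is $(1)\Rightarrow(3)$, which I would approach through the notion of a \emph{saturated} family of LNDs from \cite{FlexibleSurvey}. Take $\mathcal N$ to be the set of \emph{all} LNDs of $\C[X]$; it is stable under conjugation by $\saut(X)$ (which equals the group generated by the corresponding unipotent subgroups), and if $\partial\in\mathcal N$ and $h\in\ker\partial$ then $h\partial\in\mathcal N$ (it is again locally nilpotent, and its flow preserves $h$ and fixes $\{h=0\}$ pointwise). Flexibility says the vector fields from $\mathcal N$ span $T_pX$ at every $p$, so each $\saut(X)$-orbit is open and, $X$ being irreducible, $\saut(X)$ acts transitively. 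To promote this to $m$-transitivity I would induct on $m$, the step being: for distinct $q_1,\dots,q_{m-1}\in X$, the stabilizer of $\{q_1,\dots,q_{m-1}\}$ in $\saut(X)$ acts transitively on $X\setminus\{q_1,\dots,q_{m-1}\}$. Fix $b$ in that complement; flexibility yields LNDs $\partial_1,\dots,\partial_n$ whose values at $b$ form a basis of $T_bX$, and the replacement property is designed to let me pass to $h_j\partial_j$ with $h_j\in\ker\partial_j$, $h_j(q_i)=0$ for all $i$, and $h_j(b)\ne 0$ (the existence of such $h_j$ being the delicate point, see below): the resulting vector fields vanish at every $q_i$, so their flows lie in the stabilizer, while their values at $b$ still span $T_bX$. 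Hence the orbit of $b$ under the stabilizer is open, and, running this at every base point, ``reachable from $b$ within the stabilizer'' is an equivalence relation with open classes; since $\dim X\ge 2$ the set $X\setminus\{q_1,\dots,q_{m-1}\}$ is connected, so there is a single class, the stabilizer acts transitively on it, and together with the already-established transitivity this gives $m$-transitivity.

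I expect the main obstacle to be exactly the construction of the functions $h_j$: one needs, in the kernels of suitable LNDs — possibly only after replacing $\partial_j$ by a conjugate under $\saut(X)$ or by a further element of $\mathcal N$ — enough invariants to vanish at the prescribed points $q_1,\dots,q_{m-1}$ without vanishing at $b$, and to do so while keeping the spanning of $T_bX$. This is where $\dim X\ge 2$ is genuinely used (on a curve the stabilizer of a point is typically far from transitive) and where the full force of the saturation of $\mathcal N$ — its closure under both conjugation and multiplication by kernel elements — is required. The surrounding topological and constructibility arguments should be routine by comparison.
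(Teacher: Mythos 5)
First, note that the paper itself offers no proof of this statement: it is quoted verbatim as \cite[Theorem 0.1]{FlexibleSurvey}, so there is no in-paper argument to compare against; your outline has to stand on its own against the source it is implicitly reconstructing. The easy implications are handled correctly: $(3)\Rightarrow(2)$ is trivial, and your $(2)\Rightarrow(1)$ argument (openness and $\saut(X)$-invariance of the flexible locus, plus non-emptiness via a dominant orbit map $\phi_{\vec H}$ whose partial derivatives are values of conjugated $\mathbb{G}_a$-velocity fields) is sound and is essentially the standard argument.

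The genuine gap is exactly where you flag it, in $(1)\Rightarrow(3)$: the existence of $h_j\in\ker\partial_j$ with $h_j(q_i)=0$ for all $i$ and $h_j(b)\ne 0$ is not a "delicate point" that can be deferred --- it is the entire content of the theorem, and for a fixed LND it can simply be \emph{false}. Elements of $\ker\partial_j$ are constant on the closures of the orbits of the associated $\mathbb{G}_a$-action, so if some $q_i$ lies on the closure of the orbit through $b$, no invariant separates them, and no choice of $h_j$ works for that $\partial_j$. The resolution in \cite{FlexibleSurvey} is not a routine patch: one must first use the already-established $1$-transitivity to put the points in general position relative to the orbits, and then invoke the saturation of the family (closure under conjugation \emph{and} under multiplication by kernel elements) together with their orbit-separation lemmas to produce replacement derivations whose invariants do separate the points while the modified fields still span $T_bX$; this is also precisely where $\dim X\ge 2$ enters, since for $\dim X=1$ the kernel of an LND consists of constants and no separation is possible. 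As written, your induction step does not go through without this machinery, so the proposal is a correct skeleton of the known proof with its load-bearing lemma left unproved.
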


An action is $n$-transitive if any ordered $n$-tuple of distinct points can be sent to any other such $n$-tuple by the action.
Condition $(iii)$
implies that the automorphism group of a
flexible manifold is infinite dimensional.

An easy example of a flexible variety is $\C^ n$ for $n\ge 2$.

In view of Theorem \ref{flexibilitythm}, we have the
Corollary of our main Theorem:

\begin{corollary} \label{maincor}
  Let $X$ be a Stein manifold with density property,
   $E \to X$ a holomorphic vector bundle such that the total space $E$ is a flexible affine variety. 

Then $E$ satisfies the density property.
\end{corollary}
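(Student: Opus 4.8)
The plan is to obtain Corollary~\ref{maincor} as an immediate consequence of Theorem~\ref{main}. Two of the three hypotheses of Theorem~\ref{main} are handed to us for free: $X$ is Stein with the density property, and $\pi\colon E\to X$ is a holomorphic vector bundle. So the only thing that must be produced is an automorphism of the total space $E$ which is \emph{not} compatible with $\pi$, and the tool for this is the flexibility of $E$ via Theorem~\ref{flexibilitythm}.

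First I would clear away the degenerate cases. If $\rank E=0$ then $E\cong X$ and there is nothing to prove; if $\dim X=0$ then $E\cong\C^n$ and the conclusion is the classical theorem of Andersén and Lempert (the case $n\le 1$ being excluded as usual). Thus we may assume $\dim X\ge 1$ and $\rank E\ge 1$; in particular $\dim E\ge 2$, every fibre $E_x\cong\C^{\rank E}$ contains at least two distinct points, and $X$ itself contains at least two distinct points. Since $E$ is by hypothesis a flexible affine algebraic manifold of dimension at least $2$, Theorem~\ref{flexibilitythm} applies, so $\saut(E)$ acts $2$-transitively on $E$. Now choose distinct points $p,q$ in a common fibre $E_{x_0}$ and points $p',q'$ lying in two \emph{distinct} fibres; by $2$-transitivity there is $\Phi\in\saut(E)\subseteq\Aut(E)$ with $\Phi(p)=p'$ and $\Phi(q)=q'$. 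Note that every element of $\saut(E)$ is in particular a holomorphic automorphism, so $\Phi$ is admissible input for Theorem~\ref{main}. Such a $\Phi$ cannot be compatible with $\pi$: compatibility with $\pi$ means in particular that $\Phi$ maps each fibre of $\pi$ into a fibre of $\pi$ (equivalently, that it descends to an automorphism $\phi$ of $X$ with $\pi\circ\Phi=\phi\circ\pi$), and then $p,q$ lying in one fibre would force $p'=\Phi(p)$ and $q'=\Phi(q)$ to lie in one fibre as well, contrary to the choice of $p',q'$.

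Feeding $X$, the bundle $\pi\colon E\to X$, and this $\Phi$ into Theorem~\ref{main} then yields that $E$ has the density property, which is the assertion. The only point that calls for care — and it is a minor one — is pinning down precisely what ``not compatible with $\pi$'' means, so that the contradiction in the previous paragraph is airtight; once that is fixed, the deduction is routine. The mathematical substance of the corollary is carried entirely by Theorem~\ref{main}, so there is no serious obstacle in this step.
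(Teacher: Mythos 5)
Your proposal is correct and follows the same route as the paper: the paper likewise invokes the $2$-transitivity of $\saut(E)$ guaranteed by Theorem~\ref{flexibilitythm} to produce an algebraic (hence holomorphic) automorphism sending two points of a common $\pi$-fibre to points in distinct fibres, observes that such an automorphism cannot be $\pi$-compatible, and then applies Theorem~\ref{main}. Your extra care about the degenerate cases ($\rank E=0$, $\dim X=0$) and about $\dim E\ge 2$ for Theorem~\ref{flexibilitythm} is harmless and slightly more explicit than the paper.
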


\begin{proof}
The automorphism group of the complex manifold $E$ acts
  $2$-transitively on $E$, because $E$ is flexible.
  Hence we may choose points $p,q\in E $ with $\pi(p)=\pi(q)$
  and an automorphism $\phi$ of $E$ such that $\pi(\phi(p))\ne
  \pi(\phi(q))$. Now we may invoke Theorem~\ref{main}.

\end{proof}

This raises the question how to check whether $E$ is flexible.
It is therefore useful to know that flexibility of the base manifold
$X$ implies the flexibility of the total space $E$.

\begin{theorem}[=Theorem~\ref{vb-flexible}] \label{flextotspace}
  Let $E\to X$ be an algebraic  vector bundle over a flexible smooth
  affine variety $X$.

  Then $E$ is a flexible affine variety.
\end{theorem}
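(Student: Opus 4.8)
The plan is to reduce, via Theorem~\ref{flexibilitythm}, to proving that $\saut(E)$ acts transitively on $E$, and then to exhibit enough one-parameter unipotent subgroups of $\aut_{alg}(E)$ by treating the fibre directions and the base directions separately. First note that $E$ is affine (an affine bundle over an affine base), and that the cases $\rank E=0$ (where $E=X$) and $\dim X\le 1$ (where flexibility forces $X\cong\A^1$, the bundle is trivial, and $E\cong\A^{1+r}$) are immediate; so assume $\dim E>1$. By Theorem~\ref{flexibilitythm} it then suffices to show that $\saut(E)$ is transitive.

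For the fibre directions I would use translations. Since $X$ is affine, the sheaf of sections $\mathcal E$ of $E$ is globally generated, so for every $e\in E_x$ there is a global algebraic section $s$ of $E$ with $s(x)=e$; the map $(t,e')\mapsto e'+t\,s(\pi(e'))$ is an algebraic $\mathbb{G}_a$-action on $E$, i.e.\ the flow of a complete locally nilpotent vector field, hence lies in $\saut(E)$. These translations move every point of $E$ into the zero section $0(X)$, and move $0_x$ to every point of the fibre $E_x$. Consequently every $\saut(E)$-orbit is a union of whole fibres, and the transitivity of $\saut(E)$ reduces to the statement that the image under $\pi$ of the $\saut(E)$-orbit of $0(X)$ is all of $X$.

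The heart of the argument is to produce liftable base-direction flows by the \emph{replica} trick. Let $\delta$ be a nonzero LND on $X$ and choose a nonzero element $f$ of its plinth ideal $\ker\delta\cap\delta(\C[X])$, say $f=\delta(r)$. By the local slice theorem $X_f\cong Z\times\A^1$ with $\mathbb{G}_a$ acting by translation in the $\A^1$-factor, where $Z=\mathrm{Spec}\,(\ker\delta)_f$ is smooth and affine. By the Bass--Quillen theorem (Lindel: the ring $\C[Z]$ is regular and of finite type over $\C$) the bundle $E|_{X_f}$ is pulled back from $Z$; hence the translation action lifts to a $\mathbb{G}_a$-action on $E|_{X_f}$ whose generator is an LND $\Theta$ on $\C[E]_{\pi^*f}$ projecting to $\delta|_{X_f}$. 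Since $f\in\ker\delta$ one has $\pi^*f\in\ker\Theta$, so $(\pi^*f)^N\Theta$ is still locally nilpotent and, for $N\gg 0$, extends to a globally defined LND on $E$, projecting to the replica $f^N\delta$ of $\delta$.

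Finally, the translations together with all such lifted replicas generate a subgroup $G\le\saut(E)$ whose orbits are $\pi$-saturated and project, under $\pi$, onto the orbits of the subgroup $H\le\saut(X)$ generated by the one-parameter groups $\exp(\C\,f^N\delta)$. Away from the zero locus of $f$ the orbit of $\exp(\C\,f^N\delta)$ coincides with that of $\exp(\C\delta)$, so the flexibility of $X$ --- through the fact that at each point $x$ one may choose LNDs $\delta$ with plinth ideal not vanishing at $x$ whose values still span $T_xX$ (equivalently, that the isotropy subgroups of $\saut(X)$ act transitively on tangent directions, a standard feature of flexible varieties) --- forces every $H$-orbit to be open, hence $H$ transitive on the irreducible variety $X$. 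Then the $G$-orbit of $0(X)$ is all of $E$, so $\saut(E)$ is transitive and $E$ is flexible by Theorem~\ref{flexibilitythm}. I expect this last step to be the main obstacle: a $\mathbb{G}_a$-action on $X$ need not lift to a vector-bundle action on $E$ (there is a genuine linearization obstruction), so $\saut(X)$ cannot simply be transported to $E$; the replica construction supplies \emph{enough} liftable one-parameter subgroups, but verifying that these alone act transitively is precisely where the full local transitivity of flexible varieties --- not merely transitivity of $\saut(X)$ --- has to be invoked.
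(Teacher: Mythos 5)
Your proposal reaches the right conclusion but by a genuinely different route, and the two arguments diverge exactly where you flag the main obstacle. Both hinge on Lindel's solution of the Bass--Quillen conjecture, but the paper uses it to lift \emph{every} $G_a$-action on $X$ globally to a $G_a$-action on $E$ by vector bundle automorphisms: Theorem~\ref{bass-quillen} gives $\mu_t^*E\simeq E$, and Lemma~\ref{split} splits the extension $1\to N\to G\to G_a\to 1$ formed by the group of lifts. So the ``genuine linearization obstruction'' you invoke to justify your detour is, according to Proposition~\ref{ga-lift}, absent for unipotent one-parameter groups (it is reductive groups for which the equivariant lifting problem genuinely obstructs). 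With all base actions lifted, $\pi_*$ maps the cone of $G_a$-tangent vectors at $p$ onto $T_{\pi(p)}X$ for every $p$, and the paper finishes by quoting Gaifullin--Shafarevich (flexibility along the zero section of a parabolic $G_m$-variety) and spreading flexibility over $E$ by the fibrewise $\C^*$-action. Your handling of the vertical directions by translations along global sections is arguably cleaner and more self-contained than that appeal, and your reduction to transitivity of $\saut(E)$ via Theorem~\ref{flexibilitythm} (with the low-dimensional and rank-zero cases set aside) is fine, as is the replica construction itself: localizing at a plinth element $f=\delta(r)$, using the slice theorem to write $X_f\simeq Z\times\A^1$ with $Z$ smooth affine, pulling $E|_{X_f}$ back from $Z$ by Lindel, and clearing denominators with $(\pi^*f)^N$ all check out.

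What your route buys is independence from the group-theoretic splitting lemma; what it costs is the last step, which as written is asserted rather than proved. You need, at every point $x$, finitely many LNDs whose plinth ideals do not vanish at $x$ and whose values span $T_xX$. This does not follow from transitivity of $\saut(X)$ alone; it follows from the stronger fact that the isotropy subgroup $\saut(X)_x$ acts transitively on tangent directions (the prescribed-differential theorem of Arzhantsev--Flenner--Kaliman--Kutzschebauch--Zaidenberg, which gives $SL(T_xX)$ in the image for $\dim X\ge 2$), via a conjugation argument you should write out: pick one LND $\delta$ and a point where both $\delta$ and its plinth ideal are nonzero, transport that point to $x$ by transitivity, then conjugate the resulting LND by isotropy elements $\psi$ at $x$, noting that $\psi(x)=x$ preserves non-vanishing of the plinth ideal at $x$ while moving the value at $x$ through an $SL(T_xX)$-orbit of a nonzero vector, which spans. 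This is a one-way implication, not the ``equivalence'' you claim, and it is the single unproved link in your chain; with it supplied, your argument closes.
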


The proof of the main theorem for vector bundles of rank greater than one
contains two main ingredients: Proposition \ref{approximable} gives that all vector fields on $E$ that are compatible with the projection map are limits of Lie combinations of complete vector fields, while Theorem \ref{no-inter} grants that there is no intermediate closed Lie algebra between the Lie algebra of vector fields compatible with the projection map and the Lie algebra of all vector fields. The fact that $E$ is flexible is used to guarantee the existence of a complete vector field which is not compatible with the projection map.

The case of line bundles is handled separately in Section \ref{LineBundle}. The main difference is that the fiber does not satisfy the density property, hence a new approach is required when approximating vertical vector fields. 

Using Theorem \ref{main}, we are able to provide a class of Stein manifolds with the density property.

\begin{corollary} \label{quasi-hom}
Let $X$ be a smooth irreducible affine variety with only constant invertible functions. Assume a reductive algebraic group $G$ acts on $X$ with an open orbit and that the unique closed orbit is biholomorphic to neither the complex plane $\CC$ nor a torus $(\CC^*)^k, \ k\in \NN$.

Then $X$ has the density property.
\end{corollary}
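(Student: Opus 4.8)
The plan is to realise $X$ as the total space of an algebraic vector bundle over its closed orbit and then invoke Corollary \ref{maincor}. First I would set up the bundle. Write the closed orbit as $Y=G\cdot y_0\cong G/P$; being a closed subvariety of the affine variety $X$ it is affine, so $P$ is reductive by Matsushima's criterion. Luna's slice theorem (in the smooth case) yields a $P$-stable affine slice $\mathcal S\ni y_0$ and a $G$-isomorphism $X\cong G\times_P\mathcal S$, and since $Y$ is the \emph{unique} closed orbit the saturated open set $G\cdot\mathcal S$ must be all of $X$. I would then check that $\mathcal S$ is a smooth affine $P$-variety with $\C[\mathcal S]^P=\C$ (so $y_0$ is its unique closed orbit, and a fixed point) which carries a dense $P$-orbit — the latter because $X$ has a dense $G$-orbit and $G$-orbits in $G\times_P\mathcal S$ correspond to $P$-orbits in $\mathcal S$. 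The crux is that such an $\mathcal S$ is linear: $P$-equivariantly isomorphic to the $P$-module $V:=T_{y_0}\mathcal S$. For this I would embed $\mathcal S$ $P$-equivariantly as a closed subvariety of a $P$-module $W$ with $y_0\mapsto 0$, choosing $W$ of minimal dimension; then $\dim W=\dim T_{y_0}\mathcal S=\dim\mathcal S$ by smoothness (the dense orbit being what guarantees that a minimal embedding, given by a $P$-submodule lifting $\mathfrak m_{y_0}/\mathfrak m_{y_0}^2$, exists), and a closed subvariety of $W$ of dimension $\dim W$ must equal $W$, whence $\mathcal S=W\cong V$. Hence $\pi\colon X\cong G\times_P V\to Y$ is an algebraic, indeed $G$-homogeneous, vector bundle.

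Next I would show the base $Y$ is flexible and has the density property. Any unit of $\C[Y]$ pulls back along $\pi$ to a unit of $\C[X]$, so $\C[Y]^*=\C^*$. Writing $G=G_{ss}\cdot Z(G)^0$, the quotient $Y/\!/G_{ss}$ is a quotient of the torus $G/G_{ss}$, hence itself a torus; if $G_{ss}$ were not transitive on $Y$ this torus would be positive-dimensional and would give non-constant units on $Y$, contradicting $\C[Y]^*=\C^*$. Thus $G_{ss}$ is transitive on $Y$, and, being generated by one-parameter unipotent subgroups, its image in $\aut(Y)$ lies in $\saut(Y)$ and acts transitively, so $Y$ is flexible by Theorem \ref{flexibilitythm} (when $\dim Y\ge 2$). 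Moreover $Y=G/P$ is affine homogeneous with reductive stabiliser and is isomorphic to neither $\C$ (a line is never homogeneous under a reductive group) nor, by hypothesis, a torus, so $Y$ has the density property by the known results on homogeneous spaces (see \cite{ForstnericBook,ALSurvey}). The degenerate cases are harmless: $\dim Y=1$ is impossible, since an affine homogeneous curve is $\C$ or $\C^*$ and $Y$ is neither; and if $\dim Y=0$ then $X\cong V$ is a complex vector space, for which the density property is the Andersén--Lempert theorem.

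Finally, Theorem \ref{flextotspace} shows that the algebraic vector bundle $X\to Y$ over the flexible smooth affine $Y$ has flexible total space, and $X$ is Stein because affine; Corollary \ref{maincor}, applied with base $Y$ and total space $E=X$, then gives the density property of $X$. The step I expect to be the real obstacle is the linearity of the Luna slice $\mathcal S$ — equivalently, the fact that a smooth affine variety acted on by a reductive group with a dense orbit and a fixed point is a representation; granting that, the remainder is assembling Theorems \ref{flexibilitythm} and \ref{flextotspace}, Corollary \ref{maincor}, and the homogeneous-space results.
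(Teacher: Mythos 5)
Your proof follows the same skeleton as the paper's: use Luna's slice theorem to write $X\cong G\times_H\C^N$ as a vector bundle over the closed orbit $G/H$, note that $G/H$ is affine with the density property by the homogeneous-space results (the paper cites \cite{DonzelliDvorskyKaliman2010}; the hypothesis excluding $\C$ and tori enters exactly here), and conclude via Theorem~\ref{main}/Corollary~\ref{maincor}. Where you genuinely diverge is in how the two auxiliary inputs are supplied. First, the paper simply quotes the global form of Luna's theorem (a smooth affine $G$-variety with a unique closed orbit is $G$-isomorphic to $G\times_H V$ for the slice representation $V$), whereas you re-derive the linearity of the slice by a minimal-embedding-dimension argument. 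That sub-argument is the soft spot: the dense orbit is not what makes it work. What one actually uses is that $\C[\mathcal S]^P=\C$ (unique closed orbit), so that the saturated open set on which the étale slice map $\mathcal S\to T_{y_0}\mathcal S$ is an isomorphism is forced to be everything; your version also silently needs the $P$-stable lift of $\mathfrak m/\mathfrak m^2$ to generate $\C[\mathcal S]$ as an algebra and $\mathcal S$ to be irreducible. Second, for the incompatible automorphism the paper imports flexibility of $X$ wholesale from \cite[Theorem 2]{GaifullinShafarevic2019} (this is stated in the paragraph preceding the proof), while you produce it internally: transitivity of $G_{ss}$ on $Y$ (forced by $\C[X]^*=\C^*$) gives flexibility of the base via Theorem~\ref{flexibilitythm}, and Theorem~\ref{flextotspace} then gives flexibility of the total space. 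Your route is more self-contained and is a perfectly valid alternative; it also makes visible where the hypothesis on invertible functions is used. One small caveat in your degenerate case: if $\dim Y=0$ and $N=1$ then $X\cong\C$, which does \emph{not} have the density property, so Andersén--Lempert does not cover that case; this marginal configuration (a point as closed orbit, e.g.\ $\C^*$ acting on $\C$) is a blind spot of the statement itself rather than of your argument, and the paper's proof does not address it either.
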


With the exception on the condition about the unique closed orbit, the hypotheses are the same as in \cite[Theorem 2]{GaifullinShafarevic2019} where the authors prove that such manifolds are flexible.

The fact that there is a unique closed orbit is a consequence of Luna's slice Theorem
(\cite{LunaSlice}, see also \cite[Theorem 6.7]{AlgGeoIV}).

\begin{proof}[Proof of Corollary~\ref{quasi-hom}]
Luna's slice Theorem gives that $X$ is $G$-biholomorphic to $G \times_H \CC^N$ for a reductive subgroup $H \subset G$ acting linearly on $\CC^N$ for some $N \in \NN$. In particular, the projection $G \times_H \CC^N \to G/H$ onto the first factor is a holomorphic vector bundle. Since $G$ and $H$ are reductive, $G/H$ is affine and biholomorphic to the unique closed orbit of the $G$-action on $X$. The work of Donzelli, Dvorsky, and Kaliman \cite{DonzelliDvorskyKaliman2010} guarantees that $G/H$ has the density property; the conclusion follows from Theorem \ref{main}.
\end{proof}

Manifolds equipped with a Lie group action admitting an open orbit are called \emph{quasi-homogeneous}
(or \emph{almost-homogeneous}); many quasi-homogeneous affine varieties were classified by Popov \cite{Popov1973a, Popov1973b} and Popov and Vinberg \cite{PopovVinberg1972}.

If we require $G$ to be semisimple instead of reductive, the result holds without additional hypotheses and is a corollary of the following
result:

\begin{theorem}\cite[Theorem 5.6]{FlexibleSurvey} 
  \label{semisimple}
Suppose that a connected semisimple algebraic group $G$ acts on a smooth irreducible affine variety $X$ with an open orbit.

Then $X$ is homogeneous with respect to a connected affine algebraic group $\tilde{G} \supset G$ without nontrivial characters.
\end{theorem}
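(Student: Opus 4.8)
The plan is to reduce the statement to the case of a homogeneous vector bundle and then enlarge $G$ by a suitable group of ``translations along sections''. Since $G$ is semisimple and the $G$-action on the affine variety $X$ has a dense orbit $\Omega$, every $G$-invariant regular function is constant on $\Omega$, hence on $X$; thus $\mathcal O(X)^G=\C$ and $X$ has a unique closed $G$-orbit $Y$. Fix $y\in Y$: as $Y$ is a closed orbit in an affine variety, the stabilizer $H:=G_y$ is reductive and $Y\cong G/H$. Exactly as in the proof of Corollary~\ref{quasi-hom}, Luna's slice theorem (here using that $X$ is smooth and that $X/\!\!/G$ is a point) produces a $G$-equivariant isomorphism $X\cong G\times_H V$ for a finite-dimensional $H$-module $V$, under which the projection $\pi:E:=G\times_H V\to G/H$ is a $G$-homogeneous vector bundle whose zero section is the closed orbit $Y$. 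Since $G$ and $H$ are reductive, $G/H$ is affine.

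Next I would construct $\tilde G$. Because $G/H$ is affine, the $\mathcal O(G/H)$-module of sections $\Gamma(G/H,E)\cong(\mathcal O(G)\otimes V)^H$ is finitely generated and surjects onto every fiber $E_p$; moreover it is a rational (locally finite) $G$-module for the action induced by left translation on $G$. Picking finitely many sections whose values at $eH$ span $V\cong E_{eH}$ and letting $W$ be the finite-dimensional $G$-submodule they generate, the evaluation $\mathrm{ev}_{eH}:W\to E_{eH}$ is still surjective, and by $G$-equivariance so is $\mathrm{ev}_p:W\to E_p$ for every $p$. The additive group $W$ acts on $E$ by the algebraic automorphisms $x\mapsto x+w(\pi(x))$, i.e. by translating each point with the value, at the point below it, of the corresponding section. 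A direct computation shows that $G$ normalizes this subgroup of $\Aut(E)$ and acts on $W$ through its given $G$-module structure, so that $\tilde G:=W\rtimes G$ acts on $X\cong E$ by algebraic automorphisms, with $G=\{0\}\rtimes G\subset\tilde G$.

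Finally, I would check the three required properties. For transitivity: given $x\in E_p$ with $p=gH$, apply $g$ to the zero vector over $eH$ to reach the zero vector over $p$, then apply some $w\in W$ with $\mathrm{ev}_p(w)=x$ (available by the surjectivity above) to reach $x$; hence $\tilde G$ acts transitively on $X$. The group $\tilde G$ is connected, being an extension of the connected group $G$ by the vector group $W$, and it is affine algebraic. And any character of $\tilde G$ restricts trivially to the unipotent subgroup $W$ and to the semisimple subgroup $G$, hence is trivial; so $\tilde G$ has no nontrivial characters, and $X$ is homogeneous under $\tilde G\supset G$ as claimed.

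The step I expect to cause the most trouble is the reduction itself: upgrading the étale-local conclusion of Luna's slice theorem to the global isomorphism $X\cong G\times_H V$ — this is precisely where smoothness of $X$ and the triviality of the categorical quotient $X/\!\!/G$ are used — together with the (more routine) point that the group of section-translations can be taken finite-dimensional without destroying transitivity, i.e. the extraction of the finite-dimensional $G$-submodule $W$ with surjective evaluation. Once $X$ is presented as a homogeneous vector bundle, the rest of the argument is formal.
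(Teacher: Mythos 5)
Your argument is correct: the reduction via Luna's slice theorem to $X\cong G\times_H V$ (using smoothness and $\mathcal{O}(X)^G=\CC$), the extraction of a finite-dimensional $G$-submodule $W$ of sections with surjective evaluation, and the transitive action of $\tilde G=W\rtimes G$ (which has no characters since $W$ is unipotent and $G$ is semisimple) all hold up. Note, however, that the paper does not prove this statement at all — it is quoted verbatim from the flexibility survey of Arzhantsev et al.\ (\cite[Theorem 5.6]{FlexibleSurvey}) with an explicit disclaimer of any contribution; your proof is essentially the standard one from that source, and it matches the way the paper itself deploys Luna's slice theorem in the proof of Corollary~\ref{quasi-hom}.
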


For the sake of completeness, we explicitly state the result for semisimple groups; we do not claim any contribution to it.

\begin{corollary}[Arzhantsev, Flenner, Kaliman, Kutzschebauch, Zaidenberg]
  Let $X$ be an affine manifold of dimension greater than one and not isomorphic to a torus $\left(\CC^ *\right)^k,\  k \in \NN$.
  Assume that a connected semisimple algebraic group $G$ acts on $X$ algebraically with an open orbit.

Then $X$ satisfies the density property.
\end{corollary}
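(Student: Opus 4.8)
The plan is to derive this as a formal consequence of Theorem~\ref{semisimple} together with the (already known) density property of homogeneous spaces of linear algebraic groups without characters; no new ingredient enters, consistently with the authors' disclaimer. First I would apply Theorem~\ref{semisimple}: the $G$-action extends to a transitive action of a connected affine algebraic group $\tilde G\supset G$ with no nontrivial characters, so that $X\cong\tilde G/H$ for a closed subgroup $H\subset\tilde G$, and this homogeneous space is affine because $X$ is. Since $\dim X>1$, the space $X$ is not isomorphic to $\CC$, and by assumption it is not isomorphic to an algebraic torus $(\CC^*)^{k}$; thus $X$ avoids precisely the two exceptional isomorphism types that have to be excluded in the homogeneous-space density theorem.

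Next I would record that a connected linear algebraic group with trivial character group is generated by its one-parameter unipotent subgroups. Let $\tilde G^{+}\subset\tilde G$ be the closed connected normal subgroup they generate; in characteristic zero it contains every unipotent element of $\tilde G$, as each such element lies on a one-parameter unipotent subgroup. By functoriality of the Jordan decomposition the unipotent part of any element of $\tilde G$ maps into $\tilde G^{+}$, so every element of $\tilde G/\tilde G^{+}$ is semisimple; a connected linear algebraic group all of whose elements are semisimple is a torus. Its character group embeds into $X(\tilde G)=\{1\}$, so $\tilde G/\tilde G^{+}$ is trivial, i.e. $\tilde G=\tilde G^{+}$. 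In particular $X=\tilde G/H$ is homogeneous under a group generated by one-parameter unipotent subgroups, hence flexible, in the spirit of Theorems~\ref{flexibilitythm} and~\ref{flextotspace}.

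It then remains to invoke the theorem that a homogeneous space of a connected linear algebraic group without nontrivial characters, of dimension at least two and isomorphic to neither $\CC$ nor an algebraic torus, possesses the algebraic density property; this is due to Donzelli, Dvorsky and Kaliman \cite{DonzelliDvorskyKaliman2010} in this generality, resting on the semisimple Lie group case treated earlier by T\'oth and Varolin, and it is also recorded in \cite{FlexibleSurvey} and \cite{ForstnericBook}. By the previous two paragraphs $X$ meets all the hypotheses, and since the algebraic density property implies the density property, $X$ has the density property.

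I expect essentially no obstacle internal to this argument: it is pure bookkeeping, namely extending the action via Theorem~\ref{semisimple}, recognizing a character-free group as generated by unipotents, and checking that the two exceptional isomorphism classes are avoided (immediate from $\dim X>1$ and the hypothesis). The genuine difficulty is external and already in the literature: the proof of the homogeneous-space density property runs by induction on $\dim\tilde G$, successively splitting off one-parameter unipotent (and then reductive) subgroups and reducing to the density property of semisimple algebraic groups, which is where the real analytic content lies.
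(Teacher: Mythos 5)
Your proposal follows essentially the same route as the paper: apply Theorem~\ref{semisimple} to realize $X$ as an affine homogeneous space of a connected affine algebraic group without nontrivial characters, note that the hypotheses exclude $\CC$ and the tori, and quote the known density theorem for such homogeneous spaces. The only differences are cosmetic: your middle paragraph on generation by unipotent one-parameter subgroups is not needed for the conclusion, and the paper cites \cite[Theorem 1.3]{KalimanKutz2017}, which covers arbitrary closed isotropy groups, rather than \cite{DonzelliDvorskyKaliman2010}, whose hypotheses require reductive isotropy and so would not by itself cover every stabilizer arising from Theorem~\ref{semisimple}.
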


\begin{proof}
  By Theorem \ref{semisimple}, $X$ is a homogeneous space.
  Since a semisimple algebraic group acts with an open orbit,
  $X$ is neither one-dimensional nor a torus.
  Hence it satisfies the density property thanks to \cite[Theorem 1.3]{KalimanKutz2017}.
\end{proof}

Let us illustrate a new example of Stein manifold with the density property that can be obtained from our results:

\begin{example}
  Let $P$ be a polynomial on $\C$
  without multiple zeroes.
  Then
  \[
  X=\{(x,y,z)\in\C^3:xy=P(z)\}
  \]
  is called a {\em  Danielewski surface}. Danielewski surfaces are
  known to be flexible affine varieties and to satisfy the density
  property (\cite{KalimanKutz2008.1}). One may easily verify that
  $b_2(X)=\deg(P)-1$ for a Danielewski surface
  (compare \cite{MR1669174}).
  The classification
  of homogeneous surfaces
  (see \cite{MR638369}) implies that no simply-connected
  complex surface with $b_2(X)>1$ is biholomorphic to a quotient $G/H$ of
  complex Lie groups.
  Furthermore, $b_2(X)>0$ implies that there exist non-trivial
  holomorphic line bundles.%
  \footnote{$b_2(X)>0$ implies $H^2(X,\Z)\ne\{0\}$. On a Stein manifold
    $H^ 2(X,\Z)$ classifies holomorphic line bundles up to
    holomorphic equivalence.}
  Fix such a non-trivial line bundle over a Danielewski surface
  and call the total space $Y$.
  By Theorem \ref{flextotspace} flexibility of
  the base manifold $X$ implies that the total space $Y$ is also flexible.
  Thus $Y$ is a Stein manifold with density property
  by Corollary \ref{maincor}.
  To our knowledge, this is a new example
  of a manifold with density property
  that does not appear elsewhere in the literature. 
  \end{example}

Most of the paper is devoted to the proof of Theorem \ref{main}. Section \ref{flexibility} independently proves Theorem \ref{flextotspace}, while in the last section we discuss the existence of tame sets in the newly provided Stein manifolds with the density property; the definition of tame set and a panoramic of the context in which they arise will be provided there.

\begin{fact} \label{spanLie}
  Let $X$, $Y$ be complete vector fields on a manifold $M$.

  Then $[X,Y]$ can be approximated by
  linear combinations of complete vector fields.
\end{fact}

\begin{proof}
  For the benefit of the reader we indicate a proof although this is well
  known.
  Let $\phi_t$ be the flow associated to $X$. Then
  \[
    [X,Y]={\mathcal L}_X(Y)=\lim_{t\to\infty}\frac{\phi_t^*Y-Y}{t}
  \]
  where ${\mathcal L}$ denotes the Lie derivative.
  Both $\frac 1t\phi_t^ *Y$ and $\frac 1tY$
  are complete for every $t\in\C$.
\end{proof}

\begin{corollary}\label{closure-is-LA}
  Let $M$ be a manifold, $D$ the space of all vector fields and
  $C$ the vector space generated by all complete vector fields.

  Then the closure of $C$ in $D$ is a Lie subalgebra.
\end{corollary}

\section{Connections and horizontal vector fields}

Let $\pi:E\to X$ be a holomorphic vector bundle.

To find complete vector fields on $E$, we will lift complete vector fields on $X$ along the projection $\pi:E \to X$. In order to do so, we need a holomorphic connection.

A holomorphic connection $\nabla$
on a holomorphic vector bundle is a rule to derive
  holomorphic sections.
  It defines a map $\nabla:\Gamma(U,E)\to\Omega^ 1(U)\tensor\Gamma(U,E)$
  for open subsets $U\subset X$.
  In local coordinates  $\nabla$ evaluates to 
  \[
  \nabla\sigma\simeq d\sigma+A\sigma
  \]
  with the {``\em connection form''} $A$ being a section
  in $End(E)\tensor\Omega_X^1$.

  A connection yields a direct sum decomposition of the tangent
  bundle $T_E=H\oplus V$ where $H$ is the vector bundle of
  {\em horizontal}
  vector fields and $V$ the bundle of {\em vertical} vector fields.
  (such a direct sum decomposition is also known as
  {\em ``Ehresmann connection''}).
  A vector field $v$ on $X$ may be lifted (with respect to the given
  connection) to a ``horizontal vector field''  $\tilde v$.
  In local coordinates ($x_i$ coordinates on the base, $y_j$ linear
  coordinates on the fiber) we have
  \begin{equation}\label{lift-explizit}
  v=\sum_i f_i(x)\frac{\partial}{\partial x_i}
  \ \mapsto\
  \tilde v=\left(\sum_i f_i(x)\frac{\partial}{\partial x_i}\right)
  +\sum_{i,j,k}A_{i,j}^kf_k(x)y_i\frac{\partial}{\partial y_j}
  \end{equation}

  For details on the theory of connections,
  see e.g. \cite{Atiyah},\cite{Wells}.

  \begin{fact}
    Let $X$ be a manifold, $E$ a vector bundle over $X$ with a
    connection $\nabla$.

    Then every vector field on the manifold $E$ decomposes as
    the sum of a horizontal (with respect to $\nabla$) and a
    vertical vector field.
  \end{fact}
  \begin{proof}
    This follows from the direct sum decomposition $T_E=H\oplus V$.
  \end{proof}

\begin{proposition}
Every holomorphic vector bundle $E$ over a Stein manifold $X$ admits a holomorphic connection.
\end{proposition}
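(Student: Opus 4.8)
The plan is to exploit the two standard facts that make holomorphic connections on Stein manifolds easy: the existence of a $C^\infty$ connection on any bundle, and Cartan's Theorem B, which kills the obstruction to making it holomorphic. Recall that a holomorphic connection on $\pi:E\to X$ is a $\C$-linear map $\nabla$ from holomorphic sections of $E$ to holomorphic sections of $E\tensor\Omega^1_X$ satisfying the Leibniz rule $\nabla(fs)=f\nabla(s)+s\tensor df$. The space of holomorphic connections, when nonempty, is an affine space over $H^0(X,\mathrm{End}(E)\tensor\Omega^1_X)$, so the question is purely one of existence, and existence is governed by a single cohomological obstruction, the Atiyah class $a(E)\in H^1(X,\mathrm{End}(E)\tensor\Omega^1_X)$.

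First I would set up the Atiyah extension: for the bundle $E$ there is a canonical short exact sequence of holomorphic vector bundles on $X$,
\[
0\to E\tensor\Omega^1_X\to J^1(E)\to E\to 0,
\]
where $J^1(E)$ is the bundle of $1$-jets of holomorphic sections of $E$. A holomorphic connection is precisely a holomorphic splitting of this sequence (a right inverse to $J^1(E)\to E$ as a map of $\O_X$-modules), and such a splitting exists if and only if the extension class $a(E)\in\mathrm{Ext}^1_{\O_X}(E,E\tensor\Omega^1_X)\cong H^1\bigl(X,\mathrm{End}(E)\tensor\Omega^1_X\bigr)$ vanishes. This is the conceptual heart of the argument, but it is entirely standard (Atiyah), so I would state it and cite it rather than reprove it.

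Then I would invoke the hypothesis that $X$ is Stein. The sheaf $\mathrm{End}(E)\tensor\Omega^1_X$ is a coherent analytic sheaf on the Stein manifold $X$ (indeed it is locally free), so Cartan's Theorem B gives $H^1\bigl(X,\mathrm{End}(E)\tensor\Omega^1_X\bigr)=0$. Hence $a(E)=0$ and the Atiyah sequence splits holomorphically, producing the desired holomorphic connection. Alternatively, and perhaps more self-containedly, one can argue directly at the level of local data: choose a holomorphic trivializing cover $\{U_i\}$ with transition matrices $g_{ij}$; the local connection matrices of the naive flat connections, $\omega_i=0$, fail to patch because on overlaps they differ by $g_{ij}^{-1}dg_{ij}$, which is an $\mathrm{End}(E)\tensor\Omega^1_X$-valued Cech $1$-cocycle; by Theorem B this cocycle is a coboundary, $g_{ij}^{-1}dg_{ij}=\eta_i-g_{ij}^{-1}\eta_j g_{ij}$ for holomorphic $\mathrm{End}(E)\tensor\Omega^1_X$-valued $0$-cochains $\eta_i$, and then the $\omega_i:=\eta_i$ glue to a global holomorphic connection.

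The main obstacle is really just bookkeeping: verifying that $g_{ij}^{-1}dg_{ij}$ transforms correctly so as to define a class in $H^1(X,\mathrm{End}(E)\tensor\Omega^1_X)$ (the cocycle condition follows from differentiating $g_{ik}=g_{ij}g_{jk}$), and checking that the gauge-transformed matrices $\eta_i$ do patch to a well-defined global object. None of this is deep; the only genuine input is Cartan's Theorem B, which is where Steinness is used, exactly as in the analogous (and even easier) proof that every holomorphic vector bundle over a Stein manifold admits a global holomorphic frame-free... that is, admits many global sections. I would therefore keep the write-up short: state the Atiyah class formulation, note that it lives in a group that vanishes by Theorem B, and conclude.
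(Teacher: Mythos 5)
Your argument is correct and is essentially identical to the paper's proof: both identify the obstruction as the Atiyah class in $H^1(X,\mathrm{End}(E)\otimes\Omega^1_X)$ and kill it with Cartan's Theorem B, since that sheaf is coherent and $X$ is Stein. You merely spell out the Atiyah extension and the \v{C}ech-cocycle bookkeeping that the paper leaves implicit.
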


\begin{proof}
The obstruction to the existence of a holomorphic connection on $E$ is given by a cohomology class in $H^1(X, \Omega^1 \otimes End(E))$. The sheaf $\Omega^1 \otimes End(E)$ is coherent and $X$ is Stein, hence Cartan's Theorem B guarantees that the cohomology group is trivial and there is no obstruction to the existence of a holomorphic connection.
\end{proof}

Now that we can lift vector fields, we need to check that completeness is preserved.

\begin{proposition}\label{horiz}
Let $E \to X$ be a holomorphic vector bundle equipped with a holomorphic connection.

Then for every complete vector field on $X$ the lift to $E$ which is horizontal with respect to the connection is complete, too.
\end{proposition}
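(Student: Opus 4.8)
The plan is to show that the horizontal lift $\tilde{V}$ of a complete field $V$ on $X$ is complete by proving that its integral curves cannot escape to infinity in finite time, using the completeness of $V$ downstairs together with the linear structure along the fibers. First I would fix a complete holomorphic vector field $V$ on $X$ with flow $\varphi_t$ defined for all $t \in \C$, and let $\tilde{V}$ be its horizontal lift with respect to the given holomorphic connection. Since $\pi_* \tilde{V} = V$, any integral curve $\gamma(t)$ of $\tilde{V}$ projects to an integral curve of $V$, namely $\pi(\gamma(t)) = \varphi_t(\pi(\gamma(0)))$, which exists for all complex time. So the only way $\gamma$ can fail to extend is if it runs off to infinity within a fiber (or across fibers) while its projection stays in a compact set of $X$ over a bounded time interval; the task is to rule this out.

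The key point is that horizontal transport along the connection is \emph{linear} on the fibers. Concretely, I would argue as follows. Cover a relatively compact piece of the base trajectory by finitely many trivializing charts $U_\alpha$ over which $E|_{U_\alpha} \cong U_\alpha \times \C^r$ and the connection is given by a holomorphic connection matrix $\Gamma_\alpha$ (a matrix of holomorphic $1$-forms). In such a chart, writing a point as $(x, w)$ with $w \in \C^r$, the horizontal lift has the local form $\tilde{V} = \sum_i V_i(x)\,\partial_{x_i} - \big(\Gamma_\alpha(V)(x)\,w\big)\cdot \partial_w$, so that along a base curve $x(t) = \varphi_t(x_0)$ the fiber component $w(t)$ satisfies the \emph{linear} ODE $\dot w(t) = -A(t) w(t)$ with $A(t) = \Gamma_\alpha(V)(x(t))$ holomorphic in $t$. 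A linear ODE with coefficients holomorphic on a disc has solutions holomorphic on that same disc — there is no blow-up — so $w(t)$ extends as far as $x(t)$ does. Patching finitely many such charts along the (compact, for $t$ in a bounded disc) base trajectory, one sees the lifted curve extends over the same time interval; since $V$ is complete this interval is all of $\C$. Hence $\tilde{V}$ is complete.

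Equivalently, and perhaps more cleanly, I would phrase this via parallel transport: for each $x_0 \in X$ and each $t \in \C$, parallel transport along the path $s \mapsto \varphi_s(x_0)$, $s \in [0,t]$ (or along any path in $\C$ from $0$ to $t$, the answer being path-independent in $t$ up to the monodromy of the connection, but here we only move along the fixed integral curve so no monodromy issue arises) defines a $\C$-linear isomorphism $P_t \colon E_{x_0} \to E_{\varphi_t(x_0)}$, and the flow of $\tilde{V}$ is simply $\tilde\varphi_t(v) = P_t(v)$ for $v \in E_{x_0}$. The existence of parallel transport for all $t$ reduces, in local trivializations, precisely to the solvability-for-all-time of the linear ODE above, which holds because $\varphi_t(x_0)$ is defined for all $t$ and the connection is holomorphic. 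Thus $\tilde\varphi_t$ is a globally defined flow and $\tilde{V}$ is complete.

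The main obstacle is the bookkeeping in passing from the local chart statement to a global one: one must ensure that for $t$ ranging over a fixed compact disc $|t|\le R$ the base trajectory $\{\varphi_t(x_0) : |t|\le R\}$ is covered by \emph{finitely} many trivializing neighborhoods and that the local solutions glue, so that no finite accumulation of chart transitions occurs before time $R$. This is routine since that base trajectory is the continuous image of a compact set, but it is the step that needs care; the analytic heart of the argument — that linear ODEs with holomorphic coefficients on a disc do not blow up — is elementary.
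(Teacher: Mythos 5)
Your argument is correct and follows essentially the same route as the paper: in a local trivialization the fiber component of the lifted integral curve satisfies a linear ODE with coefficients holomorphic along the (globally defined) base trajectory, and such linear equations cannot blow up in finite time. The paper's proof is the same reduction, stated more tersely; your additional care about patching trivializations along the compact piece of the trajectory and the parallel-transport reformulation only make explicit what the paper leaves implicit.
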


\begin{proof}
  Let $V=\sum_i f_i(x)\dd {x_i}$ be (the local expression for) a complete vector field on the base with
  associated flow $\Phi(x,t)$, i.e., $ \Phi'(x,t)=f(\Phi(x,t))$.

  Locally, on some neighborhood $U\subset X$ we may trivialize $E$ such that
  $\nabla(\sigma)=d\sigma+\alpha \sigma$
  where $\alpha$ is a holomorphic function on $U$ with
  values in $Mat(n\times n,\CC)$. The horizontal lift of $V$ is given by Equation \eqref{lift-explizit} above.

  Hence, we are looking for a function $H:U \times \CC^r \times \CC \to \CC^r$ such that
  \[
  H_i'(x,s,t)=\sum_{i,j,k}\alpha^i_{jk}(\Phi(x,t))v_j(\Phi(x,t))H_k(x,s,t)
  \]
  But this is a {\em linear} differential equation. Thus there is a global
  solution (global on the fiber).
  
\end{proof}
  We conclude this section with a lemma which will be used later.
Loosely speaking, a $\pi$-compatible vector field is one that maps $\pi$-fibers to $\pi$-fibers; see Section \ref{incompatible} for more details.

\begin{lemma} \label{horizontal}
Let $E \to X$ be a holomorphic vector bundle equipped with a holomorphic connection over a Stein manifold with the density property $X$.

Then every horizontal vector field on $E$
which is $\pi$-compatible
 can be approximated by linear combinations of complete vector fields. 
\end{lemma}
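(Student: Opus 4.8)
The plan is to reduce the statement to the density property of the base $X$ via the horizontal lifting construction of Proposition \ref{horiz}. A horizontal vector field $W$ on $E$ that preserves $\pi$ projects to a holomorphic vector field $V = \pi_* W$ on $X$; since the connection determines a canonical horizontal lift, $W$ is exactly the horizontal lift of $V$, so it suffices to approximate $W$ by Lie combinations of complete vector fields when $V$ is itself a Lie combination of complete vector fields on $X$. The density property of $X$ gives such $V$ as a limit (in the compact-open topology) of elements of the Lie algebra generated by complete vector fields on $X$, so the first step is to check that horizontal lifting is continuous in the compact-open topology: if $V_n \to V$ uniformly on compacta in $X$, then their horizontal lifts $\tilde V_n \to \tilde V$ uniformly on compacta in $E$. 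This is immediate from the local formula $f(x)\,\partial_x + f(x)\alpha(x)(s)\,\partial_s$ for the lift, since $\alpha$ is holomorphic and fixed.

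Next I would reduce to the case where $V$ is a single complete vector field on $X$, and then to a Lie bracket of complete vector fields. By Proposition \ref{horiz}, the horizontal lift $\tilde V$ of a complete $V$ is complete on $E$, so it is trivially a (finite) Lie combination of complete vector fields on $E$. The remaining point is that horizontal lifting, while linear over $\C$, need not be a Lie algebra homomorphism: the lift of $[V_1, V_2]$ differs from $[\tilde V_1, \tilde V_2]$ by a vertical vector field, namely the one measuring the curvature of the connection paired with $V_1 \wedge V_2$. Concretely, $[\tilde V_1, \tilde V_2] = \widetilde{[V_1,V_2]} + \Theta$, where $\Theta$ is vertical. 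Here is where Lemma \ref{vertical} (invoked in the proof of Theorem \ref{mainrank2}) enters: $\Theta$ is a vertical vector field on $E$, hence approximable by Lie combinations of complete vector fields. Since $\tilde V_1, \tilde V_2$ are complete, $[\tilde V_1, \tilde V_2]$ is approximable by Remark \ref{spanLie}, and therefore $\widetilde{[V_1,V_2]} = [\tilde V_1,\tilde V_2] - \Theta$ is approximable as the difference of two approximable fields (the approximable fields form a closed linear subspace). Iterating, the horizontal lift of any iterated bracket of complete vector fields on $X$ is approximable, and combined with the continuity of lifting established above, the horizontal lift of any element of the closure of the Lie-generated subalgebra — in particular of any holomorphic vector field on $X$, by the density property — is approximable.

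The main obstacle is the curvature-correction term: one must verify cleanly that the discrepancy $[\tilde V_1,\tilde V_2] - \widetilde{[V_1,V_2]}$ is genuinely vertical (so that Lemma \ref{vertical} applies) and that it is globally well-defined on $E$, not just locally. Verticality follows because both sides project to $[V_1,V_2]$ under $\pi_*$, which is a computation with the local connection form $\alpha$; global well-definedness is automatic once one knows the object is $\pi_*$-vertical and built from globally defined data ($\tilde V_i$ and the connection). One subtlety worth spelling out: Lemma \ref{vertical} must be available for \emph{all} vertical fields, including those arising here, which is the case since it is proved independently in the rank $\ge 2$ setting; in the line bundle case a separate argument is needed, consistent with the paper's remark that Section \ref{LineBundle} handles line bundles differently. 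Modulo these checks, the lemma follows formally from Propositions \ref{horiz} and the verticality reduction together with the density property of $X$.
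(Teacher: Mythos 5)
Your argument is correct for bundles of rank at least two, but it takes a genuinely different and more roundabout route than the paper. The paper never confronts the curvature term at all: it first uses Remark \ref{spanLie} (and its corollary, that the closure of the \emph{linear} span of complete vector fields is already a Lie subalgebra) to convert the density property of $X$ into the statement that every holomorphic vector field $V$ on $X$ is a limit of plain linear combinations of complete vector fields, and only then lifts. Since horizontal lifting is $\C$-linear and continuous, and each lift of a complete field is complete by Proposition \ref{horiz}, the lifted linear combinations approximate $\tilde V$ and the proof ends there. Your route instead lifts Lie combinations directly, identifies the discrepancy $[\tilde V_1,\tilde V_2]-\widetilde{[V_1,V_2]}$ as a vertical (curvature) field, and disposes of it via Lemma \ref{vertical}. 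That is sound, but it buys you nothing and costs you something: it imports a dependence on Lemma \ref{vertical} and hence on $\mathrm{rank}(E)\ge 2$, whereas the lemma as stated carries no rank hypothesis and the paper's proof is rank-independent (this matters since the line-bundle section also relies on lifting base vector fields); and the bookkeeping of correction terms for iterated brackets, while manageable, is extra work. The moral you missed is that Remark \ref{spanLie} lets you discard the Lie structure on the base \emph{before} lifting, so the non-homomorphism property of the lift never becomes an issue.
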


\begin{proof}
  A $\pi$-compatible horizontal vector field on $E$ is the lift of a vector field $V$ on $X$. Because of the density property and Fact \ref{spanLie}, $V$ can be approximated by a linear combination of complete vector fields. Since lifting a vector field is a linear operation, the same linear combination of the lifts of these vector fields will approximate our original vector field on $E$.
\end{proof}

\section{The Euler vector field}\label{sect-euler}

Let $E\to X$ be a complex vector bundle. Then there is a natural
$\C^*$-action on $E$ with a corresponding vector field,
the {``\em Euler vector field''} $\Theta$.

In local coordinates: Let $(x_i)_i$ be coordinates on the
base space $X$ and let $(y_j)_j$ be fiberwise linear coordinates
on $E$.
Then
\[
\Theta=\sum_j y_j\frac{\partial}{\partial y_j}
\]

The vector bundle structure is completely determined by the Euler vector field:
The zero-section coincides with the vanishing locus of $\Theta$,
$\pi(p)=\pi(q)$ for $p,q\in E$ iff the closures of the $\C^ *$-orbits
through $p$ and $q$ intersect.
A function $\lambda$ on $E$ is fiber wise linear if and only
if $\Theta\lambda=\lambda$.

\begin{lemma}\label{euler-hori-commute}
  Let $E\to X$ be a holomorphic vector bundle with a holomorphic connection
  $\nabla$ and let $\Theta$ be the Euler vector field.

  Then $[\Theta,V]=0$ for every horizontal vector field $V$.
\end{lemma}
\begin{proof}
  This follows from an easy calculation using \eqref{lift-explizit}.
\end{proof}

\section{Vector fields (in)compatible with the vector bundle structure}%
\label{incompatible}

\begin{definition}\label{def-compat}
  Let $\pi:E\to X$ a holomorphic vector bundle on a complex manifold $X$.

  \begin{itemize}
  \item
  An automorphism $\phi$ of the complex manifold $E$ is called
  {\em ``compatible with the bundle structure''},
  or simply ``$\pi$-compatible''
  if $\phi$ maps $\pi$-fibers to $\pi$-fibers
  (In other words: if
  $\forall p,q\in E:\pi(p)=\pi(q)\implies \pi(\phi(p))=\pi(\phi(q))$.)
\item
A vector field $v$ on $E$ (not necessarily complete)
is called ``$\pi$-compatible'' if for every
$p,q\in E$ with $\pi(p)=\pi(q)$ we have $\pi_*(v_p)=\pi_*(v_q)$.
  \end{itemize}
\end{definition}

In the above setup an automorphism or a vector field
is called {\em incompatible} if it is not compatible.

  If $X$ is smooth and therefore normal, being ``compatible''
is equivalent to the existence of an automorphism $\psi$ of $X$ such that
the diagram
\[
\begin{tikzcd}
E \arrow{r}{\phi} \arrow{d}{\pi} & E \arrow{d}{\pi}\\
X \arrow{r}{\psi} & X \\
\end{tikzcd}
\]
is commutative.

Note that we do not require the map $\phi$ to be fiberwise linear.
Hence the notion ``{\em $\pi$-compatible automorphism}''
is weaker than the notion
of a vector bundle automorphism.

For a vector field $v$ on $E$
``$\pi$-compatibility'' 
is equivalent to the existence of a vector field
$w$ on $X$ with $\pi_*(v_p)=w_{\pi(p)}\ \forall p\in E$.

If a $\pi$-compatible vector field is also complete, the associated
flow consists of $\pi$-compatible automorphisms.

\begin{lemma}\label{lem5.1}
  Let $v$ be a $\pi$-compatible vector field
  on $E$ and let $f$ be a holomorphic function
  on $E$.
  Assume that $v$ is not everywhere tangent to the $\pi$-fibers (i.e., $\pi_*v\not\equiv 0$).
  
  Then $fv$ is $\pi$-compatible if and only if $f$ is constant
  along the $\pi$-fibers.
\end{lemma}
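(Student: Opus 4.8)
The plan is to translate the statement into a pointwise identity and then deal with a single degeneracy locus. Since $v$ is $\pi$-compatible there is a holomorphic vector field $w$ on $X$ with $\pi_*(v_p)=w_{\pi(p)}$ for all $p\in E$, and since $\pi_*$ is linear on each tangent space, $\pi_*((fv)_p)=f(p)\,w_{\pi(p)}$. Thus $fv$ is $\pi$-compatible exactly when, for all $p,q$ in a common fibre over $x=\pi(p)=\pi(q)$, we have $f(p)\,w_x=f(q)\,w_x$.

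For the ``if'' direction I would simply note that if $f$ is constant along the $\pi$-fibres it descends to a function $\tilde f$ on $X$, holomorphic because $\tilde f=f\circ\sigma$ for any local holomorphic section $\sigma$ of $\pi$; then $\pi_*((fv)_p)=(\tilde f\,w)_{\pi(p)}$, so $fv$ is $\pi$-compatible, with associated field $\tilde f\,w$. This direction does not use the hypothesis $\pi_*v\not\equiv 0$.

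For the ``only if'' direction, let $Z=\{x\in X:w_x=0\}$. The hypothesis $\pi_*v\not\equiv 0$ means $w\not\equiv 0$, so $Z$ is a proper analytic subset of the (connected) manifold $X$ and $U:=X\setminus Z$ is open and nonempty. Over $U$ the scalar $w_x$ is nonzero, so from $f(p)\,w_x=f(q)\,w_x$ we get $f(p)=f(q)$ whenever $p,q$ lie in a common fibre over a point of $U$. To remove the restriction to $U$, I would pass to the fibre product $P:=E\times_X E$, i.e.\ the total space of the bundle $E\oplus E\to X$, which is a connected complex manifold, and consider the holomorphic function $F\colon P\to\C$, $F(p,q)=f(p)-f(q)$. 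The previous step says $F$ vanishes on the nonempty open set $\{(p,q)\in P:\pi(p)\in U\}$, so by the identity theorem $F\equiv 0$ on $P$; that is precisely the assertion that $f$ is constant along every $\pi$-fibre.

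The only genuinely non-formal point is crossing the zero locus $Z$ of $w$, and the identity-theorem argument on $E\times_X E$ is the cleanest route I see; it is also the only place connectedness of $X$ enters. Connectedness is genuinely needed here: on a component over which $v$ happens to be vertical the statement would fail, since there $\pi_*(fv)\equiv 0$ for any $f$.
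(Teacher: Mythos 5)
Your proof is correct. The forward direction is the same trivial observation as in the paper. For the converse the paper argues by contraposition: assuming $f$ is not fibrewise constant, it intersects the (claimed open and dense) set of points where $f$ is non-constant along its fibre with the open dense set where $\pi_*v\neq 0$, picks a witness pair $p,q$ in a common fibre there, and reads off $\pi_*\bigl((fv)_p\bigr)\neq\pi_*\bigl((fv)_q\bigr)$. You argue directly: $\pi$-compatibility of $fv$ forces $f$ to be fibrewise constant over the dense open set $\{w\neq 0\}$, and the identity theorem applied to $(p,q)\mapsto f(p)-f(q)$ on $E\times_X E$ propagates this across the zero locus of $w$. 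Both arguments hinge on the same fact --- that the locus where $\pi_*v$ vanishes is a proper analytic subset --- but your version has the small advantage of not needing the paper's unproved assertion that the set of points where $f$ is non-constant along its fibre is dense (which requires knowing that the set of fibres on which $f$ is constant is analytic); you only ever use density of $\{w\neq 0\}$, which is immediate. Your closing remark about connectedness is apt: both proofs implicitly assume $X$ connected.
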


\begin{proof}
  Assume that $f$ is not constant along the $\pi$-fibers.
  
  The set $\Omega_1$ of $p\in E$ where $f$ is not constant along
  the $\pi$-fiber is open and dense
  in $E$.
  The set $\Omega_2$ of points where $v$ is not tangent to the $\pi$-fiber is
  also open and dense.

  Hence the intersection of these two sets $\Omega_1\cap\Omega_2$
  is still dense and in
  particular not empty. Let $p$ be a point in $\Omega_1\cap\Omega_2$.
  Choose $q\in\Omega_1\cap\Omega_2$
  with $\pi(p)=\pi(q)$, but $f(p)\ne f(q)$.
  (This is possible, since $p\in\Omega_1$.)
  By construction, we have $\pi_*(v_p)\ne 0$ and
  $\pi_*(v_p)=\pi_*(v_pq)$.
  
  We obtain
  \[
  \pi_*(fv_p)=f(p)\underbrace{\pi_*(v_p)}_{\ne 0}
  \ne f(q)\pi_*(v_p)=f(q)\pi_*(v_p)=\pi_*(fv_q).
  \]
  Thus $fv$ can not be $\pi$-compatible unless $f$ is constant along the
  $\pi$-fibers.

  The opposite direction is obvious.
\end{proof}

\begin{lemma}
Let $V$ be a complete vector field on $E$. Then the associated
flow
\begin{itemize}
\item
  consists of vector bundle automorphisms if $[\Theta,V]=0$.
\item
  consists of automorphisms $\phi$ which stabilize each fiber
  (i.e.~$\pi\circ\phi=\pi$) and act fiberwise
   as translations if $[\Theta,V]=-V$.
\end{itemize}
\end{lemma}

\begin{proof}
  This follows easily from explicit calculations in local
  coordinates.
\end{proof}

\begin{proposition}\label{from-auto-to-vf}
  Let $E\to X$ be a vector bundle over a Stein manifold $X$.
  
  If there exists a $\pi$-incompatible automorphism $\phi$, there also
  exists a $\pi$-incompatible complete vector field $v$.
\end{proposition}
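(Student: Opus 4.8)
The plan is to produce the required vector field as a $\phi$-pushforward of a vertical translation field; the Euler field itself will not be needed. For a global holomorphic section $c$ of $E$, let $\tilde c$ be the holomorphic vector field on $E$ which in any local trivialization $(x,y)$ of $E$ reads $\sum_j c_j(x)\,\dd{y_j}$ (these are precisely the holomorphic fields with $[\Theta,\tilde c]=-\tilde c$). Its flow is the fiberwise translation $p\mapsto p+t\,c(\pi(p))$, defined for all $t\in\C$, so $\tilde c$ is complete; hence $\phi_*\tilde c$ is complete for every such $c$, and it suffices to exhibit one $c$ for which $\phi_*\tilde c$ is \emph{not} $\pi$-compatible.

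Write $g:=\pi\circ\phi\colon E\to X$. For $a\in E$ the chain rule gives
\[
\pi_*\big((\phi_*\tilde c)_{\phi(a)}\big)=d\pi_{\phi(a)}\big(d\phi_a(\tilde c_a)\big)=dg_a(\tilde c_a)=dg_a\big(c(\pi(a))\big),
\]
so $\phi_*\tilde c$ fails to be $\pi$-compatible as soon as there are $a,b\in E$ with $g(a)=g(b)$ but $dg_a\big(c(\pi(a))\big)\neq dg_b\big(c(\pi(b))\big)$. Since $\phi$ is $\pi$-incompatible, $g$ is non-constant along some fiber $E_{x_0}$; fix such an $x_0$, a point $p\in E_{x_0}$ at which $g|_{E_{x_0}}$ has nonzero differential, and a vector $u\in E_{x_0}$ with $d(g|_{E_{x_0}})_p(u)\neq0$.

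The geometric crux is that $g^{-1}(g(p))=\phi^{-1}\big(E_{g(p)}\big)$ is, via $\phi$, biholomorphic to the fiber $E_{g(p)}$, hence a connected closed submanifold of $E$ of dimension $\rank E$. If it were contained in $E_{x_0}$, then --- being a closed submanifold of the connected manifold $E_{x_0}$ of full dimension, hence also open --- it would equal $E_{x_0}$; but that forces $\phi(E_{x_0})=E_{g(p)}$ and hence $g$ constant on $E_{x_0}$, contrary to our choice. So there is $q\in E$ with $g(q)=g(p)$ and $\pi(q)\neq x_0$. Using that $X$ is Stein, Cartan's theorems permit interpolation of sections of $E$ at the two distinct points $x_0$ and $\pi(q)$: pick a global section $c$ with $c(x_0)=u$ and $c(\pi(q))=0$. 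Then $g(p)=g(q)$, while $dg_p\big(c(\pi(p))\big)=d(g|_{E_{x_0}})_p(u)\neq0$ and $dg_q\big(c(\pi(q))\big)=dg_q(0)=0$; hence $v:=\phi_*\tilde c$ is complete and $\pi$-incompatible, as required. I expect the only steps that need care are the dimension/connectedness identification of $g^{-1}(g(p))$ with a fiber and the two-point interpolation of sections; there is no analytic difficulty beyond Cartan's theorems.
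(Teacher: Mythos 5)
Your proof is correct. It shares the paper's basic mechanism --- push forward by $\phi$ a complete vector field that is tangent to the fibres, chosen via Steinness of $X$ so that the pushforward fails compatibility --- but the execution is genuinely different. The paper uses the Euler field: it first tests whether $\phi_*\Theta$ is already incompatible, and in the remaining case multiplies $\Theta$ by a pullback $f\circ\pi$ of a base function separating points of $\pi(\phi^{-1}(F))$, invoking the preceding lemma on when $fv$ stays compatible. You instead use the degree~$-1$ translation fields $\tilde c$ attached to global sections $c\in\Gamma(X,E)$, and verify incompatibility by the direct computation $\pi_*\bigl((\phi_*\tilde c)_{\phi(a)}\bigr)=dg_a\bigl(c(\pi(a))\bigr)$ with $g=\pi\circ\phi$. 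This buys you two things: you avoid the case distinction and the auxiliary lemma entirely (in particular you never need $\phi_*\Theta$ to be non-vertical, a hypothesis of that lemma which the paper's argument quietly relies on), and your interpolation step only requires prescribing $c$ at two points rather than finding a base function non-constant on $\phi^{-1}(F)$. Your connectedness/dimension argument showing $g^{-1}(g(p))=\phi^{-1}(E_{g(p)})$ is not contained in a single fibre is exactly the geometric fact the paper's choice of $f$ also implicitly needs, and you supply it explicitly. All the individual steps (completeness of the fibrewise translation flow, the chain rule identity, two-point interpolation of sections on a Stein manifold via Cartan's Theorem B) are sound.
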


\begin{proof}
  Let $\Theta$ denote the Euler vector field (see \S\ref{sect-euler}).
  If $\phi_*\Theta$ is $\pi$-incompatible, we are done.
  Thus we may assume that $\phi_*\Theta$ is $\pi$-compatible.
  Let $F$ be a $\pi$-fiber such that $\phi^{-1}(F)$ is not a fiber.
  Then we choose a holomorphic function $f$ on $X$ such that $f\circ\pi$
  is not constant along $\phi^{-1}(F)$. This implies that
  $f\circ\pi\circ\phi$ is not constant on $F$. Now
  Lemma \ref{lem5.1}
  implies that $\phi_*\left( (f\circ\pi)\Theta\right)$ is a
  complete vector field on $E$ which is incompatible with $\pi$.
\end{proof}

\begin{corollary}
  Let $\pi:E\to X$ be an algebraic vector bundle and assume
  that $E$ is a flexible smooth affine variety.

  Then there
  exists a $\pi$-incompatible complete holomorphic vector field $v$
  on $E$.
\end{corollary}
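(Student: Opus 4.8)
The plan is to combine flexibility of $E$ with Proposition~\ref{from-auto-to-vf}. The only thing that proposition needs as input is the existence of a single $\pi$-incompatible automorphism of the complex manifold $E$; once we have that, the proposition immediately produces a $\pi$-incompatible complete holomorphic vector field. So the entire task reduces to showing: \emph{a flexible smooth affine variety carrying an algebraic vector bundle structure admits a $\pi$-incompatible automorphism}.

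First I would fix a fiber $F_0=\pi^{-1}(x_0)$ and pick two distinct points $p,q\in F_0$ (this requires $\rank E\ge 1$, which is automatic for a vector bundle whose total space has dimension strictly greater than $\dim X$; if $\rank E=0$ then $E\cong X$ and there is nothing of the sort to prove — but in that degenerate case the statement is vacuous anyway, so I would note $\rank E\ge 1$). Next, pick a point $p'$ lying in a \emph{different} fiber, say $p'\in\pi^{-1}(x_1)$ with $x_1\ne x_0$; such a point exists as soon as $X$ is not a single point, i.e. $\dim X\ge 1$, which again is part of the ambient hypothesis that $E$ has dimension $\ge 2$ — and if $\dim X=0$ then $E$ is an affine space and the Euler field together with a translation already gives a $\pi$-incompatible pair, so that case is handled directly. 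By Theorem~\ref{flexibilitythm}, applied to the flexible affine variety $E$ (which has dimension greater than one), the group $\saut(E)$ acts $2$-transitively; hence there is an automorphism $\phi\in\saut(E)\subset\Aut(E)$ with $\phi(p)=p$ and $\phi(q)=p'$. Then $\phi$ maps $q\in F_0$ to $p'\notin F_0$ while keeping $p\in F_0$ fixed, so $\phi(F_0)$ is not contained in a single $\pi$-fiber. Since $X$ is smooth hence normal, $\pi$-compatibility of $\phi$ would force $\phi$ to send the fiber $F_0$ onto a single fiber; therefore $\phi$ is $\pi$-incompatible.

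Finally, feed this $\phi$ into Proposition~\ref{from-auto-to-vf}: since $E$ is in particular a vector bundle over the Stein manifold $X$ (affine implies Stein), the proposition yields a $\pi$-incompatible complete holomorphic vector field $v$ on $E$, which is exactly the claim.

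The main obstacle is purely bookkeeping: making sure the low-dimensional and low-rank edge cases ($\dim X=0$, $\rank E=0$) are either excluded by the standing hypotheses or dispatched by hand, and checking that $2$-transitivity of $\saut(E)$ is genuinely available, i.e. that $\dim E>1$. Once $\dim E\ge 2$, flexibility of $E$ gives $2$-transitivity via Theorem~\ref{flexibilitythm}, and the rest is immediate. In particular there is no hard analysis here — the work was already done in Proposition~\ref{from-auto-to-vf}; this corollary is just the observation that flexibility supplies the hypothesis of that proposition.
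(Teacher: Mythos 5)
Your proposal is correct and follows essentially the same route as the paper: use $2$-transitivity of $\saut(E)$ (from Theorem~\ref{flexibilitythm}) to build an automorphism fixing one point of a fiber while moving another point of that fiber into a different fiber, conclude $\pi$-incompatibility, and feed the result into Proposition~\ref{from-auto-to-vf}. The extra bookkeeping on the degenerate cases $\rank E=0$ and $\dim X=0$ is harmless but not needed beyond what the paper already assumes.
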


\begin{proof}
  The automorphism group of every flexible smooth affine variety
  is $n$-transitive for every
  $n\in\N$. Choose $p,q_1,q_2\in E$ with
  $\pi(p)=\pi(q_1)\ne \pi(q_2)$ and $p\ne q_1$.
  Since $\Aut(E)$ (the group of variety automorphisms of $E$)
  is $2$-transitive, there exists an automorphism $\phi$ of the  variety $E$
  such that $\phi(p)=p$, $\phi(q_1)=\phi(q_2)$.
  Now $\pi(\phi(p))\ne\pi(\phi(q_1))$ although $\pi(p)=\pi(q_1)$.
  Thus $\phi$ is $\pi$-incompatible and the statement
  follows from Proposition~\ref{from-auto-to-vf}.
\end{proof}

\section{Preparations}

\subsection{Existence of certain vector fields}

  \begin{lemma}\label{ex-stein-vf}
    Let $X$ be a Stein manifold, $p\in X$, $u\in T_pX$ and
    let $W$ be a  holomorphic vector field with $W(p)\ne 0$.

    Then there exists a holomorphic vector field $V$ on $X$
    with $[V,W](p)=u$.
  \end{lemma}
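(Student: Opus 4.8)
The plan is to reduce the problem to a local computation near $p$ and then use the fact that on a Stein manifold any holomorphic vector field defined on a neighborhood of a point can be approximated — in fact, with the help of Cartan's Theorem A/B, reproduced — by a global holomorphic vector field matching a prescribed finite jet at $p$. First I would choose local holomorphic coordinates $(x_1,\dots,x_n)$ centered at $p$. Since $W(p)\ne 0$, the holomorphic flow box theorem lets me straighten $W$: after a biholomorphic change of coordinates near $p$ we may assume $W=\partial/\partial x_1$ on a neighborhood $U$ of $p$. In these coordinates the bracket becomes $[V,W]=[V,\partial/\partial x_1]=-\sum_i (\partial a_i/\partial x_1)\,\partial/\partial x_i$ for $V=\sum_i a_i(x)\,\partial/\partial x_i$. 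So to get $[V,W](p)=u=\sum_i u_i\,\partial/\partial x_i|_p$, it suffices to pick $V$ locally with $(\partial a_i/\partial x_1)(0)=-u_i$; the simplest choice is $a_i(x)=-u_i x_1$, i.e.\ the local vector field $V_0=-x_1\sum_i u_i\,\partial/\partial x_i$.

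Next I would globalize. The local vector field $V_0$ is a holomorphic section of $T_X$ over $U$; I want a global holomorphic vector field $V$ on $X$ whose $1$-jet at $p$ agrees with that of $V_0$. Equivalently, letting $\mathfrak{m}_p$ be the ideal sheaf of $p$, I need the restriction map $H^0(X,T_X)\to (T_X/\mathfrak{m}_p^2 T_X)_p$ to be surjective, and then lift the class of $V_0$. Surjectivity follows from Cartan's Theorem B applied to the coherent sheaf $\mathfrak{m}_p^2 T_X$ on the Stein manifold $X$: the exact sequence $0\to \mathfrak{m}_p^2 T_X\to T_X\to T_X/\mathfrak{m}_p^2 T_X\to 0$ gives $H^0(X,T_X)\twoheadrightarrow H^0(X, T_X/\mathfrak{m}_p^2 T_X)$ since $H^1(X,\mathfrak{m}_p^2 T_X)=0$. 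Choose such a $V$. Then $V$ and $V_0$ have the same value and the same first-order part at $p$ in the straightened coordinates, so $[V,W](p)=[V_0,W](p)=u$, because the bracket $[\cdot,W](p)$ with $W=\partial/\partial x_1$ depends only on the $1$-jet of the first argument at $p$.

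I expect the only mild subtlety to be the bookkeeping for the flow-box normalization: one must check that straightening $W$ is a biholomorphism of a neighborhood of $p$ and that the prescribed tangent vector $u$ transforms accordingly — but since we are free to prescribe $u$ in any coordinates (the statement is coordinate-free), this is automatic: we carry out the construction in the straightened chart and transport back. There is no real obstacle here; the content is entirely "local model plus Stein globalization via Theorem B," and the hypothesis $W(p)\ne 0$ is used exactly once, to make the flow-box straightening available so that the bracket with $W$ reads off a coordinate derivative.
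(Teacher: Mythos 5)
Your proposal is correct and follows essentially the same route as the paper's proof: straighten $W$ to $\partial/\partial x_1$ near $p$, take $V$ with $1$-jet $-x_1\sum_j u_j\,\partial/\partial x_j$ at $p$, and globalize using Steinness. The only difference is that you spell out the jet-interpolation step (Cartan's Theorem B applied to $\mathfrak{m}_p^2 T_X$), which the paper leaves as the remark ``this is possible because $X$ is Stein.''
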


  \begin{proof}
    In suitably chosen local coordinates in some neighborhood of $p$
    we have
    \[
    W=\dd{x_1},\quad u=(u_1,\ldots,u_n)=\sum_j u_j\dd{x_j}\quad (u_j\in\C)
    \]
    We choose $V$ such that
    \[
    V=-\sum_j u_jx_1\dd{x_j}+\text{ terms vanishing of order $\ge 2$ in $p$}
    \]
    (This is possible, because $X$ is Stein.)
    Then
    \[
    [V,W]=\sum_j u_j\dd{x_j}+\text{ terms vanishing in $p$}
    \]
  \end{proof}
  
\subsection{Runge Theory}

\begin{proposition}
  Let $\pi:E\to X$ be a holomorphic vector bundle over a Stein manifold $X$
  and let $\Omega\subset X$ be a Runge domain
  (i.e.~every holomorphic function on $\Omega$ may be
  approximated by holomorphic functions on $X$).

  Then every section $s\in\Gamma(\Omega,E)$ may be approximated
  by sections on $X$. (We assume that $E$ is equipped with some
  fixed hermitian metric.)
\end{proposition}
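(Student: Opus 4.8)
The plan is to reduce the statement about sections of $E$ to the classical fact that a Runge domain $\Omega\subset X$ admits approximation of holomorphic functions by functions on $X$, together with the fact that coherent sheaves on Stein manifolds are generated by global sections. First I would invoke Cartan's Theorem A: the sheaf $\mathcal O(E)$ of holomorphic sections of $E$ is coherent, and $X$ is Stein, so there are finitely many global sections $\sigma_1,\dots,\sigma_N\in\Gamma(X,E)$ which generate the stalk $\mathcal O(E)_x$ as an $\mathcal O_{X,x}$-module at every point $x\in X$. (One can in fact realize $E$ as a subbundle of a trivial bundle $X\times\C^N$, which makes the $\sigma_i$ the restrictions of the standard coordinate sections.) Consequently, over $\Omega$ every section $s\in\Gamma(\Omega,E)$ can be written as $s=\sum_{i=1}^N h_i\sigma_i|_\Omega$ for suitable holomorphic functions $h_i\in\mathcal O(\Omega)$.

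Next I would approximate: since $\Omega$ is Runge in $X$, each $h_i$ can be approximated uniformly on a given compact $K\subset\Omega$ by holomorphic functions $\tilde h_i\in\mathcal O(X)$. Then $\tilde s:=\sum_{i=1}^N\tilde h_i\sigma_i$ is a global holomorphic section of $E$ on $X$, and on $K$ we have
\[
\|\tilde s-s\|=\Bigl\|\sum_{i=1}^N(\tilde h_i-h_i)\sigma_i\Bigr\|
\le\sum_{i=1}^N\|\tilde h_i-h_i\|_K\cdot\sup_{K}\|\sigma_i\|,
\]
which is as small as we like once the $\|\tilde h_i-h_i\|_K$ are small; here $\|\cdot\|$ is the fixed hermitian norm on the fibres of $E$ and the $\sup_K\|\sigma_i\|$ are finite constants. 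This gives the desired approximation of $s$ by global sections in the compact-open topology on $\Gamma(\Omega,E)$.

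The one genuine point to be careful about is the existence of the global generating sections $\sigma_i$ and the resulting representation $s=\sum h_i\sigma_i$ with $h_i$ holomorphic \emph{on all of $\Omega$} (not merely locally): this is where coherence and Cartan's Theorem A on the Stein manifold $X$ — or, equivalently, the embedding of $E$ into a trivial bundle $X\times\C^N$ — enter, and it is the step I expect to require the most care to state cleanly. Once $E$ is presented as a subbundle of a trivial bundle, the argument becomes transparent: sections of $E$ over $\Omega$ are $\C^N$-valued holomorphic maps on $\Omega$ landing in the subbundle, their components are approximable by the Runge property of $\Omega$, and the orthogonal (or any holomorphic) projection of $X\times\C^N$ onto $E$ — which exists globally because $X$ is Stein — carries the approximating maps back into sections of $E$ without spoiling the estimate.
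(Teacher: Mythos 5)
Your proof is correct in its final (direct-summand) form, but note that the paper does not actually prove this proposition: it is dispatched with a one-line citation to Theorem~5.4.4 of Forstneri\v{c}'s monograph. What you supply is therefore a genuinely different, self-contained and elementary route: realize $E$ as a direct summand of a trivial bundle $X\times\C^N$ (finitely many generating global sections give a surjection $\mathcal O_X^N\to\mathcal E$ of locally free sheaves, which splits because $H^1(X,\mathrm{Hom}(E,\mathcal K))=0$ on the Stein manifold $X$), view a section over $\Omega$ as an $N$-tuple of scalar holomorphic functions, approximate componentwise using the Runge hypothesis, and push the approximants back into $E$ with the holomorphic projection; the fibrewise operator norms of the projection and the comparison of metrics are bounded on compacts, so the estimate survives. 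Two caveats on your intermediate steps. First, the formulation via $s=\sum_i h_i\sigma_i$ with $h_i\in\mathcal O(\Omega)$ requires surjectivity of $\Gamma(\Omega,\mathcal O^N)\to\Gamma(\Omega,E)$, hence the vanishing of $H^1(\Omega,\mathcal K)$ for the kernel sheaf, i.e.\ essentially that $\Omega$ be Stein; the parenthetical definition of ``Runge'' in the statement does not guarantee this (e.g.\ $\C^2\setminus\{0\}\subset\C^2$ satisfies it by Hartogs but is not Stein). You rightly flag this as the delicate point, and your direct-summand version sidesteps it entirely, since there the functions being approximated are simply the $\C^N$-coordinates of $s$. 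Second, the \emph{orthogonal} projection onto a holomorphically varying subbundle is in general only real-analytic, not holomorphic; the projection you need is the holomorphic splitting furnished by Cartan's Theorem~B, as your ``or any holomorphic'' already concedes. With these readings the argument is complete and arguably more informative than the citation in the paper.
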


\begin{proof}
This is a consequence of \cite[Theorem 5.4.4]{ForstnericBook}.
\end{proof}




\begin{theorem}\label{dense-sections} 
  Let $\pi:E\to X$ be a holomorphic vector bundle on a Stein manifold $X$.
  Let
  $(\sigma_j)_{j\in I}$
  be a 
  family of  holomorphic sections in $E$ which span
  $E$ everywhere.

  Then the $\Olo_X$-module generated by the $\sigma_i$ is
  dense in the space of global sections of $E$.
\end{theorem}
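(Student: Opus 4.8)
The plan is to reduce the statement to a statement about finitely many sections together with a Runge-type approximation argument, then use Cartan's Theorems A and B. First I would observe that since $X$ is Stein and $E$ is finite rank, by Cartan's Theorem A the $\Olo_X$-module of global sections of $E$ is generated by finitely many global sections $\tau_1,\dots,\tau_m$ (indeed $E$ is a quotient of a trivial bundle $\Olo_X^N$). Hence it suffices to show that each $\tau_k$ lies in the closure of the submodule $M$ generated by the family $(\sigma_j)_{j\in I}$. Because a compact-open limit only involves a compact exhaustion $K_1\subset K_2\subset\cdots$ of $X$, it further suffices to approximate each $\tau_k$ uniformly on each $K_\ell$ by elements of $M$.

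Next I would exploit that the $\sigma_j$ span $E$ at every point. Fix a compact set $K$. By compactness, finitely many of the sections, say $\sigma_{j_1},\dots,\sigma_{j_r}$, already span $E_x$ for every $x$ in a neighborhood of $K$ — more precisely, I would choose them so that they span $E$ on a relatively compact Stein (Runge) neighborhood $\Omega$ of $K$. Over $\Omega$ the surjection $\Olo_\Omega^{\,r}\twoheadrightarrow E|_\Omega$, $(f_1,\dots,f_r)\mapsto\sum_i f_i\sigma_{j_i}$, splits holomorphically: its kernel is a subbundle (the map has constant rank since the $\sigma_{j_i}$ span everywhere), and over the Stein manifold $\Omega$ the short exact sequence of vector bundles splits. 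Therefore the given section $\tau_k|_\Omega$ can be written as $\sum_i f_i\,\sigma_{j_i}$ with $f_i\in\Olo(\Omega)$. Since $\Omega$ is Runge in $X$, each $f_i$ is a uniform limit on $K$ of functions $\tilde f_i\in\Olo(X)$, and then $\sum_i \tilde f_i\,\sigma_{j_i}\in M$ approximates $\tau_k$ uniformly on $K$ (with respect to the fixed hermitian metric, using that the $\sigma_{j_i}$ are bounded on $K$). Letting $K$ run through the exhaustion yields $\tau_k\in\overline{M}$, hence $\overline{M}$ contains all global sections.

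The main obstacle is the step where I need the spanning family to be replaced, on a Runge neighborhood, by a \emph{finite} spanning subfamily and then to split the resulting surjection $\Olo_\Omega^{\,r}\to E|_\Omega$. Finiteness on a compact set is immediate from openness of the spanning condition plus compactness; the subtlety is choosing $\Omega$ to be simultaneously Stein and Runge in $X$ while still being spanned by the chosen sections — here one shrinks to a sublevel set $\{\varphi<c\}$ of a strictly plurisubharmonic exhaustion, which is Stein and Runge, and which lies in the open spanning locus. Once $\Omega$ is fixed, the splitting is the standard fact that on a Stein manifold every short exact sequence of holomorphic vector bundles splits (Cartan B applied to the $\mathrm{Hom}$-bundle of a would-be splitting, or equivalently triviality of the relevant $H^1$). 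The rest is the Runge approximation proposition quoted just above and elementary estimates with the hermitian metric.
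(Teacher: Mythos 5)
Your proposal is correct and follows essentially the same route as the paper: replace the family by a finite subfamily spanning $E$ over a relatively compact Runge--Stein open set, show the induced map on sections over that set is surjective, and then use Runge approximation of the coefficient functions along an exhaustion. The only cosmetic differences are that the paper produces the finite spanning subfamily by a recursion on the dimension of the degeneracy locus rather than by compactness, and obtains surjectivity on sections from $H^1=0$ for the coherent kernel sheaf rather than by splitting the bundle surjection; both variants are valid.
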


\begin{remark}
  Density is the best one may hope for, equality is not always satisfied.
  For example, let $X=\C$, $E$ the trivial line bundle and consider
  \[
  \sigma_k=\frac{e^{2\pi i z}-1}{z-k}
  \]
  for $k\in\Z$.
  Then every holomorphic function ($=$ section in the trivial line bundle)
  which is contained in the $\Olo_X$-module generated by the
  $(\sigma_k)_{k\in\Z}$
  must vanish on almost all integers, hence this $\Olo_X$-module does not
  equal $\Olo(\C)$, although it is dense.
\end{remark}

\begin{proof}
  Fix a Runge exhaustion $U_k$, i.e., we choose
  holomorphically convex open subsets $U_k$
  of $X$ such that $U_k$ is relatively compact in $U_{k+1}$,
  $U_k$ is Runge in $X$ and $X$ equals the union of all $U_k$.
  
  (For example, if $\rho$ is a strictly plurisubharmonic exhaustion
    function, we may choose $U_k=\{x\in X:\rho(x)<k\}$.)

  Fix $p\in X$ and $k\in\N$. Choose $\sigma_1,\ldots,\sigma_r$
  within the given family such that
  $\sigma_1(p),\ldots,\sigma_r(p)$ generate $E_p$.
  Then these sections generate $E$ outside an analytic subset $S_1$
  of codimension at least $1$.
   
   Now we may proceed by recursion: Choosing one point on each irreducible
   component of $S_1\cap U_k$ and then enough sections such that they generate
   at these points. Since $U_k$ is relatively compact,
   there are only finitely many such irreducible components.
   Hence finitely many additional sections are enough.
   In this way we may increase
   the codimension of the singular set
   step by step, finally verifying: {\em For every $k\in\N$ there exists
     a finite subfamily $\{ \sigma_j: j\in I_k\}$ such that these
     sections generate $E_p$ for every $p\in U_k$.}

   Let $N_k=\#I_k$. Let $I_k=\{\sigma_1,\ldots,\sigma_{N_k}\}$.
   We consider the sheaf morphism $\Olo^{N_k}\to\sE$ given by
   \[
   (f_1,\ldots,f_N)\mapsto \sum_{j=1}^{N_k} f_j\sigma_j
   \]

   Let $\Ks$ denote the kernel sheaf.
   For every $k$ we obtain a long exact
   sequence
   \[
   \to \Gamma(U_k,\Olo^{N_k})\longrightarrow\Gamma(U_k,E)
   \longrightarrow H^1(U_k,\Ks)\to\ldots
   \]
   Since $U_k$ is Stein
   and $\Ks$ is coherent,
   the cohomology group $H^1(U_k,\Ks) $
   must be trivial. Thus
   $\Gamma(U_k,\Olo^N)\to\Gamma(U_k,E)$ is surjective.
   Hence every section of $E|_{U_k}$ is contained in the
   $\Olo_{U_k}$-module generated by the $\sigma_i$.

   Let $s\in\Gamma(X,E)$. For every $k\in\N$ the section $s$
   is contained in the $\Olo_{U_k}$-module generated by the $\sigma_j$.

   Fix a hermitian metric on $E$. 
   Since $U_k$ is Runge in $X$, we may find $s_k\in\Gamma(X,E)$
   such that
   \[
   ||s-s_k||_{\overline{U_{k-1}}}\le \frac 1k
   \]
   Then $s=\lim_{k\to\infty}s_k$ for the topology of locally uniform
   convergence.
   Therefore 
   $\Gamma(X,E)$ equals the
   closure of the $\Olo_X$-module generated by the $\sigma_j$.
\end{proof}

\subsection{Fréchet Spaces}

\begin{theorem}[Peter-Weyl-Theorem]\label{pweyl}
  Let $V$ be a Fr\'echet space and let $G$ be a reductive group
  acting linearly and continuously on $V$.
  An element $v\in V$ is called $G$-finite if the $G$-orbit through   $v$
  spans a finite-dimensional vector subspace of $V$.

  Then the set of all $G$-finite elements is dense in $V$.
\end{theorem}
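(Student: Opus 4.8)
The plan is to deduce the statement from the classical Peter--Weyl theorem for the maximal compact subgroup $K\subseteq G$; the only extra work beyond the Hilbert-space case is to replace orthogonal projections onto isotypic components by convolution operators built from matrix coefficients, which makes sense on any complete locally convex space and in particular on a Fréchet space.

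First I would set up the reduction to a compact group. Let $K$ be a maximal compact subgroup of $G$; since $G$ is reductive it is the complexification of $K$, so $K$ is Zariski dense in $G$. Every $G$-finite vector is trivially $K$-finite, and conversely, if $v$ is $K$-finite then $W:=\operatorname{span}(K\cdot v)$ is finite-dimensional and $K$-invariant; applying a continuous functional $\ell$ vanishing on $W$ to the holomorphic orbit map $g\mapsto \ell(g\cdot v)$, which then vanishes on the Zariski-dense subgroup $K$ and hence identically, shows via Hahn--Banach that $W$ is $G$-invariant, so $v$ is $G$-finite. Thus it suffices to prove that the $K$-finite vectors are dense. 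I expect this reduction to be the main obstacle: it is the only place where reductivity of $G$ is used rather than mere compactness, and it genuinely relies on the orbit maps being holomorphic — for an arbitrary merely continuous representation of $G$ one would only obtain density of the $K$-finite vectors, not of the $G$-finite ones, so the statement is really about the holomorphic actions appearing in the applications.

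Next I would run the compact-group argument in the Fréchet setting. For $\phi\in C(K)$ and $v\in V$, define the convolution $\phi\ast v=\int_K\phi(k)\,(k\cdot v)\,dk$ with respect to normalized Haar measure; since $K$ is compact, $V$ is complete, and the action is continuous, this vector-valued integral exists in $V$ and depends continuously on $v$. Given a continuous seminorm $p$ on $V$ and $\varepsilon>0$, continuity of the action at $e$ lets me choose $\phi\ge 0$ with $\int_K\phi\,dk=1$ supported so close to $e$ that $p(\phi\ast v-v)<\varepsilon$. By the Peter--Weyl theorem for $C(K)$, the matrix coefficients of finite-dimensional representations span a uniformly dense subalgebra, so I can replace $\phi$ by a finite combination $\psi=\sum c^{(s)}_{ij}\rho^{(s)}_{ij}$ of matrix coefficients, with an error controlled by $\|\psi-\phi\|_{\infty}\cdot\sup_{k\in K}p(k\cdot v)$, the supremum being finite because $k\mapsto p(k\cdot v)$ is continuous on the compact group $K$.

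Finally I would check that $\psi\ast v$ is $K$-finite, which reduces to the case of a single matrix coefficient $\rho_{ij}$ of an $n$-dimensional representation $\rho$: the identity $\rho_{ij}(l^{-1}k)=\sum_m\rho_{im}(l^{-1})\rho_{mj}(k)$ together with translation invariance of Haar measure gives $l\cdot(\rho_{ij}\ast v)=\sum_{m=1}^n\rho_{im}(l^{-1})\,(\rho_{mj}\ast v)$, so $\operatorname{span}\{\rho_{1j}\ast v,\dots,\rho_{nj}\ast v\}$ is a finite-dimensional $K$-invariant subspace containing $\rho_{ij}\ast v$. Hence $\psi\ast v$ is $K$-finite, and combining this with the approximation in the previous paragraph shows that the $K$-finite — and therefore, by the reduction, the $G$-finite — vectors are dense in $V$.
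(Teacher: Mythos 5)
Your argument is correct, but note that the paper does not actually prove this statement: its ``proof'' is a one-line citation to the literature, so there is no argument of record to compare against. What you have written is a complete, self-contained version of the standard proof (Weyl's unitary trick plus an approximate identity built from matrix coefficients), and all the analytic steps go through in the Fr\'echet setting exactly as you say: the vector-valued integral $\int_K\phi(k)(k\cdot v)\,dk$ exists by completeness and compactness, the seminorm estimates $p(\phi\ast v-v)$ and $\|\psi-\phi\|_\infty\sup_K p(k\cdot v)$ are the right ones, and the computation $l\cdot(\rho_{ij}\ast v)=\sum_m\rho_{im}(l^{-1})(\rho_{mj}\ast v)$ correctly exhibits a finite-dimensional invariant subspace. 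Two remarks. First, your justification of the reduction step is slightly off: a holomorphic function can vanish on a Zariski-dense subset without vanishing identically (e.g.\ $\sin(\pi z)$ on $\Z\subset\C$), so ``Zariski dense'' is not the right invocation. The correct fact is that $K$ is a maximal totally real submanifold (a real form) of $G=K_{\C}$, so a holomorphic function vanishing on $K$ vanishes to infinite order there by the Cauchy--Riemann equations and hence vanishes identically; with that fix the Hahn--Banach argument upgrading $K$-invariance of $W=\operatorname{span}(K\cdot v)$ to $G$-invariance is fine. Second, your observation that the statement as printed (``acting linearly and continuously'') is too weak, and that holomorphy of the orbit maps $g\mapsto g\cdot v$ is genuinely needed to pass from $K$-finite to $G$-finite, is a legitimate and worthwhile criticism of the theorem as stated; in the paper's only application $G=\C^*$ acts by $\lambda\cdot w=\sum_{\alpha,k}\lambda^{|\alpha|}f_{\alpha,k}(x)y^\alpha\frac{\partial}{\partial x_k}$, which is visibly holomorphic in $\lambda$, so the hypothesis is satisfied where it matters.
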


\begin{proof}
See \cite{PeterWeyl} for the original reference pertaining only to compact groups or the textbook \cite{AlgGeoIV} for the statements about reductive groups.
\end{proof}

\begin{corollary}
  Let $\pi:E\to X$ be a vector bundle, $\Theta$ the Euler vector field
  and let $H$ be a closed subspace of $\Gamma(E,TE)$ which is invariant
  under
  \[
  ad(\Theta): v \mapsto [\Theta,v].
  \]

  For $k\in\Z$, let
  \[
  H_k=\{v\in H:[\Theta,v]=kv\}.
  \]

  Then
  \[
  \oplus _{k\ge -1} H_{k}
  \]
  is dense in $H$.
\end{corollary}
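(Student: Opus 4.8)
The plan is to realize $\bigoplus_{k\in\N_0}H_{k-1}$ as the linear span of the $\C^*$-finite vectors for the natural $\C^*$-action on $H$, and then to apply the Peter--Weyl Theorem \ref{pweyl}. First I would make the action explicit: the $\C^*$-action on $E$ generated by $\Theta$ (multiplication by $\lambda$ in the fibres; write $\mu_\lambda$ for it) pushes holomorphic vector fields forward, giving a linear action of $\C^*$ on $\Gamma(E,TE)$. In a local trivialisation with base coordinates $(x_i)$ and fibrewise linear coordinates $(y_j)$, a field $v=\sum_i a_i\,\dd{x_i}+\sum_j b_j\,\dd{y_j}$ is sent by $\lambda$ to $\sum_i a_i(x,\lambda^{-1}y)\,\dd{x_i}+\sum_j\lambda\,b_j(x,\lambda^{-1}y)\,\dd{y_j}$. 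This formula shows the action is continuous, indeed algebraic (rational in $\lambda$), and that the flow $\psi_s=\mu_{e^s}$ of $\Theta$ acts on vector fields by $(\psi_s)_*=e^{-s\operatorname{ad}\Theta}$.

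Next I would verify that $H$ is not merely $\operatorname{ad}\Theta$-invariant but genuinely $\C^*$-invariant. For $v\in H$ the partial sums $\sum_{n=0}^{N}\tfrac{(-s)^n}{n!}(\operatorname{ad}\Theta)^n v$ all lie in $H$, since $H$ is a linear subspace closed under $\operatorname{ad}\Theta$; by the local description above these partial sums converge locally uniformly on $E$ to $(\psi_s)_*v$, and since $H$ is closed we get $(\psi_s)_*v\in H$ for all $s\in\C$, i.e.\ $H$ is $\C^*$-invariant. Being a closed subspace of a Fréchet space, $H$ is itself Fréchet, so Theorem \ref{pweyl} applies to the reductive group $\C^*$ acting linearly and continuously on $H$ and yields that the $\C^*$-finite vectors are dense in $H$.

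Finally I would identify those $\C^*$-finite vectors. Each spans a finite-dimensional $\C^*$-submodule of $H$; since the action is algebraic, this submodule splits into weight spaces for the characters $\lambda\mapsto\lambda^m$, so every $\C^*$-finite vector is a finite sum of eigenvectors $w$ with $\lambda\cdot w=\lambda^m w$, equivalently $[\Theta,w]=-m\,w$. The occurring weights are constrained: if $w\neq 0$ and $[\Theta,w]=cw$, then writing $w=\sum_i a_i\,\dd{x_i}+\sum_j b_j\,\dd{y_j}$ locally gives $\Theta a_i=c\,a_i$ and $\Theta b_j=(c+1)\,b_j$; expanding the holomorphic functions $a_i,b_j$ in Taylor series along the fibres shows that a non-zero holomorphic function $f$ on $E$ with $\Theta f=\nu f$ forces $\nu\in\N_0$, so $c\in\N_0$ whenever some $a_i\neq 0$, and $c+1\in\N_0$ (hence $c\geq-1$) otherwise. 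In every case $c\in\{-1,0,1,2,\dots\}=\{k-1:k\in\N_0\}$, so $w\in H_c=H_{k-1}$ for some $k\in\N_0$. Thus the $\C^*$-finite vectors lie in $\bigoplus_{k\in\N_0}H_{k-1}$ (automatically a direct sum, being a sum of eigenspaces for distinct eigenvalues of $\operatorname{ad}\Theta$), and by the previous step this subspace is dense in $H$.

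The main obstacle I expect is the passage from $\operatorname{ad}\Theta$-invariance to $\C^*$-invariance of $H$: one genuinely needs that the exponential series $e^{-s\operatorname{ad}\Theta}$ converges to the geometric $\C^*$-action, and this is exactly where the explicit local form of $\Theta$ enters rather than soft functional analysis; the weight bound $c\geq-1$ is the other point that relies on the concrete bundle structure.
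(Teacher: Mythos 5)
Your proof is correct and takes exactly the route the paper intends: the corollary is stated as an immediate consequence of the Peter--Weyl Theorem~\ref{pweyl}, applied to the $\C^*$-action on vector fields generated by the flow of $\Theta$, with the $\C^*$-finite vectors spanning precisely $\oplus_{k\in\N_0}H_{k-1}$. Your additional care in upgrading $ad(\Theta)$-invariance of $H$ to invariance under the geometric $\C^*$-action (where the locally uniform convergence of the exponential series rests on Cauchy estimates along the entire fibres) and in showing the occurring weights are bounded below by $-1$ fills in details the paper leaves implicit.
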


  \begin{proof}
    This follows from the Peter-Weyl-Theorem,
    because $H_k$ equals the set of $v\in H$ for which
    the natural $\C^ *$-action operates as $\lambda:v\to\lambda^ kv$.
  \end{proof}

  \section{Vertical vector fields}

As mentioned in the introduction, we will prove that vector fields preserving the projection can be approximated by Lie combinations of complete vector fields. We already proved that horizontal vector fields preserving the projection can be approximated by Lie combinations of complete vector field. We will now show the same result for vertical vector fields. These aspects will be put together in Proposition \ref{approximable}.

\begin{remark}\label{Lie-vs-lin}
  If a vector field $v$ on a  manifold can be approximated
  by Lie combinations of complete vector fields, it can already
  be approximated by linear combinations. This holds, because
  \[
    [v,w]=\lim_{t\to 0}\frac{\phi_t^* w-w}{t}
  \]
  if $v$ is complete with flow $\phi_t$.
\end{remark}

  \begin{definition}\label{def-adm}
  Let $\pi:E \to X$ be a holomorphic vector bundle
  of rank $r$ over a Stein manifold $X$.

  An {\em ``admissible family''} (or ``admissible frame'')
  is a family $(\sigma_1,\ldots,\sigma_r;f)$

  such that
  \begin{itemize}
  \item
    Every $\sigma_j$ is a section in $E$.
  \item
    $\sigma_1\wedge\ldots\wedge\sigma_r$ is not identically zero.
  \item
    $f$ is a non-constant holomorphic function on $X$.
  \item
    $f\left(\sigma_1\wedge\ldots\wedge\sigma_r\right)^ {-1}$ is a
    holomorphic section of $(\det E)^ *$.
  \end{itemize}
  \end{definition}

  We observe that by the last condition $\sigma_1(x),\ldots,\sigma_r(x)$
  is a vector space basis of the fiber $E_x=\pi^{-1}(x)\simeq\C^ r$
  for every $x\in X$ with
  $f(x)\ne 0$.
  
  \begin{lemma}\label{V-1}
  Let $\pi:E \to X$ be a holomorphic vector bundle
  of rank $r$ over a Stein manifold $X$.

  Let $S$ be a countable subset of $X$. Then there exists
  an admissible family (in the sense of Definition~\ref{def-adm})
  with
  \[
  f(x)\ne 0 \ \forall x\in S
  \]
\end{lemma}

\begin{proof}
  The space of global holomorphic sections $V=\Gamma(X,E^ r)$
  is a Fr\'echet vector space with respect to the topology of locally uniform
  convergence. Hence its topology is completely metrizable and $V$
  satisfies the Baire property. Thus the intersection of countably many open
  dense subsets is still dense and in particular not empty.

  For each $x\in S$ we consider the set $W_x$ of those
  $(\sigma_1,\ldots,\sigma_r)\in V$
  for which $\sigma_1(x),\ldots,\sigma_r(x)$
  forms a basis of the vector space $E_x$.

  Then each $W_x$ is open and dense in $V$, implying:
  \[
  \cap_{x\in S}W_x\ne\{\}
  \]
  We choose an element
  \[
  (\sigma_1,\ldots,\sigma_r)\in\cap_{x\in S}W_x
  \]
  Consider the set $C$ of all $x$ in which these sections fail to
  generate, i.e., the zero-locus of
  $\sigma_1\wedge\ldots\wedge\sigma_r$.
  This is a closed analytic set which does not intersect $S$.
  Taking multiplicities into account, we may regard $C$ as a divisor.
  Let $F$ denote the space of those holomorphic
  functions on $X$ which vanish on each irreducible component
  of $C$ with at least the same multiplicity
  as $\sigma_1\wedge\ldots\wedge\sigma_r$, i.e., $div(F)\ge C$ as divisors.
  
  For each $x\in S$, the set $\{f\in F:f(x)\ne 0\}$ is open and dense.
  Thus there is an element $f\in F$ with
  $\forall x\in S:f(x)\ne 0$.

  By construction, $(\sigma_1,\ldots,\sigma_r;f)$ is an admissible
  family.
\end{proof}

\begin{lemma}\label{fin-adm}
  There are finitely many admissible families
  \[
  A_k=(\sigma_1^{k},\ldots,\sigma^{(k)}_r;f^ {(k)})
  \]
  such that
  \[
  \forall x\in X:\exists k: f^ {(k)}(x)\ne 0
  \]
\end{lemma}

\begin{proof}
  We start with one admissible family $A_1$.

  Given finitely many admissible families $A_1,\ldots,A_k$, let
  $Z_k$ denote the common zero locus of the $f^ {(k)}$.
  We choose a countable set $\Sigma_k$ which intersects every irreducible
  component of $Z_k$ and apply Lemma~\ref{V-1} with $S=\Sigma_k$.
  This yields another family $A_{k+1}$ such that $\dim Z_{k+1}<\dim Z_k$.

  Thus we may proceed recursively until we found enough families to arrive
  at an empty common zero locus.
\end{proof}

\begin{lemma}\label{vf-xi}
  Let $\pi:E\to X$ be a vector bundle of rank $r$ and let
  $(\sigma_r,\ldots,\sigma_r;f)$ be an admissible family.

  Then there exists a continuous linear operator $\Xi$ from the
  space of vector fields on $\C^ r$ to the space of vertical vector fields
  on $E$ such that:

  \begin{enumerate}
  \item
    $\Xi$ preserves the natural grading (induced by the natural
    $\C^ *$-action).
  \item
    For every $x\in X$ with $f(x)\ne 0$ the evaluated
    map $v\mapsto \Xi(v)(x)$ equals the map induced by a linear
    isomorphism of $\C^ r$ and $E_x\simeq\C^ r$.
  \item
    $\Xi(v)$ is complete if $v$ is complete.
  \end{enumerate}
\end{lemma}

\begin{proof}
  We define $X^ *=\{x\in X:f(x)\ne 0\}$ and
  $\Phi: X^*\times\C^ r\to E$ as
  \[
  \Phi(x;t)=\frac{1}{f(x)}\sum_{j=1}^ r t_j\sigma_j(x)
  \]
  and $\Phi_x(t)=\Phi(x,t)$.

  For a vector field
  \[
  v=\sum_{k=1}^r h_k(t)\dd{t_k}
  \]
  we obtain a vertical vector field on $E|_{X^ *}$ as
  \begin{align*}
    (D\Phi_x)(v)&= \sum_{k=1}^r h_k(t)(D\Phi_x)\left(\dd{t_k}\right)\\
    & =\sum_{k,l=1}^r h_k(t)(D\Phi_x)_{k,l}\cdot\left(\dd{s_l}\right)
      \text{ with $s=(D\Phi_x)(t)$}
    \\
    & =\sum_{k,l=1}^r h_k(\Phi_x^{-1}(s))(D\Phi_x)_{k,l}\cdot \left(\dd{s_l}\right)\\
  \end{align*}

  We need some linear algebra: For a $r\times r$-matrix $S$ Cramer's rule states
  $S^{-1}=\frac{1}{\det(S)}S^ \#$. If $f$ is a scalar, we deduce
  \[
  (S\frac 1f)^ {-1}=\frac{1}{\det(S\frac 1f)}(S\frac 1f)^ \#
  =\frac{1}{f^{-r}\det(S)}f^{1-r}(S)^ \#=\frac{f}{\det(S)}S^ \#
  \]

  In our setting we have $\Phi_x=\frac 1{f(x)}S(x)$
  where $S(x)$ is the $r\times r$-matrix
  formed by the $\sigma_j(x)$ and we may deduce that
  $x\mapsto \Phi_x^{-1}$ which a priori is defined only on $X^*$
  actually extends holomorphically to all of $X$.

  On the other hand, by construction $x\mapsto f(x)\Phi_x$ is holomorphic
  on the whole $X$.

  Therefore
  \begin{equation}\label{def-vvf}
  \Xi(v)=\tilde v=\left(f(x)\right)^ m(D\Phi_x)(v)
  \end{equation}
  defines a global vertical holomorphic vector field on $E$ for $m\ge 1$.

  Immediate from the construction we have properties $(1)$ and $(2)$.
  Property $(3)$ holds on $X^ *$, because $\Phi_x$ is an isomorphism
  where $f(x)\ne 0$.

  Taking $m\ge 2$ in equation (\ref{def-vvf})
  the vector field $\tilde v$ will vanish
  on all fibers $E_x$ with $f(x)=0$ and therefore trivially be complete.
\end{proof}

\begin{proposition} \label{vertical}
Let $\pi:E \to X$ be a holomorphic vector bundle over a Stein manifold $X$.

If $r=rank(E)\ge 2$,
then every vertical vector field on $E$ can be
approximated by Lie combinations of complete vertical vector fields.
\end{proposition}

\begin{proof}
  Let $v$ be a vertical vector field on $E$. Due to the Peter-Weyl-theorem
  (Theorem~\ref{pweyl}) we may approximate $v$ by finite sums
  of homogeneous vector fields. Hence it suffices to consider the case
  where $v$ is homogeneous, i.e., $[E,v]=kv$ for some degree $k\in\Z$.

  Vertical vector fields of a fixed degree $k$ can be regarded as
  sections in a coherent $\Olo_X$-module sheaf on $X$.
  
  Lemma~\ref{fin-adm} gives us finitely many admissible
  families
  \[
  A_k=(\sigma_1^{n},\ldots,\sigma^{(n)}_r;f^ {(n)}):\quad \forall x\in X:\exists k:
  f^ {(n)}(x)\ne 0
  \]

  With Lemma~\ref{vf-xi} it follows that for every fixed $k$
  there are finitely many vertical
  vector fields $w_\alpha$ on $E$ such that
  \begin{enumerate}
  \item
    Every $w_\alpha$ is induced by a map $\Xi$ associated to
    an admissible family as
    described in Lemma~\ref{vf-xi}.
  \item
    For every $x$ the vector space of vector fields
    of degree $k$ on $E_x$ is spanned by the $w_\alpha$.
  \end{enumerate}

  By Andersen-Lempert theory every vector field on $\C^ r$ can be
  approximated by linear combinations of complete vector fields
  (taking into account \ref{Lie-vs-lin}).

  Thus for every $w_\alpha$ we can find a vector field $u$ on $\C^ r$ and a
  sequence of sums of complete vector fields $u_n$  on $\C^ r$
  with
  \[
  w_\alpha=\Xi(u)\text{ and }\lim_{n\to\infty}u_n=u
  \]
  Therefore $w_\alpha=\lim_{n\to\infty}\Xi(u_n)$. Note that the vertical vector
  fields $\Xi(u_n)$ are complete by Lemma~\ref{vf-xi}, (3).

  Thus every $w_\alpha$ is approximable. On the other hand, since $X$ is Stein,
  the vertical vector fields of fixed degree $k$ form a coherent sheaf and
  this sheaf is everywhere generated by the $w_\alpha$,
  we know that every vertical vector field of degree $k$ may be
  represented as a finite $\Olo_X$-linear combination of the $w_\alpha$.

  Since we discuss {\em vertical} vector fields on $E$, finite
  $\Olo_X$-linear combinations of complete resp.~approximable
  vertical vector fields are still complete resp.~approximable.

  This completes the proof.
  \end{proof}

Now that we established these facts, let us proceed with one of the two main ingredients of the proof of Theorem \ref{mainrank2}.

\begin{proposition} \label{approximable}
Let $E \xrightarrow[]{\pi} X$ be a holomorphic vector bundle of rank $\geq 2$ over a Stein manifold $X$ with the density property.

Then every $\pi$-compatible vector field on $E$
can be approximated by a linear combination of complete vector fields.
\end{proposition}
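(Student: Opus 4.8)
The plan is to decompose an arbitrary $\pi$-compatible vector field $v$ on $E$ into a \emph{vertical} part and a \emph{horizontal} part relative to a fixed holomorphic connection $D$ (which exists by the proposition in Section 3), and then approximate each part separately. Concretely, write $v = v_{\mathrm{hor}} + v_{\mathrm{vert}}$, where $v_{\mathrm{hor}}$ is the horizontal lift of $\pi_* v$ (this is well defined precisely because $v$ is $\pi$-compatible, so that $\pi_* v$ descends to a vector field $w$ on $X$), and $v_{\mathrm{vert}} := v - v_{\mathrm{hor}}$ is vertical. The horizontal summand $v_{\mathrm{hor}}$ is a horizontal vector field preserving $\pi$, so it is approximable by Lie combinations of complete vector fields by Lemma \ref{horizontal} (which uses the density property of $X$ together with Remark \ref{spanLie} and Proposition \ref{horiz}). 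The vertical summand $v_{\mathrm{vert}}$ is a vertical vector field on $E$, which is approximable by Lie combinations of complete \emph{vertical} vector fields by Lemma \ref{vertical}, using the hypothesis $\operatorname{rank}(E) \ge 2$.

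The one point requiring a small argument is that the decomposition is legitimate, i.e.\ that $v_{\mathrm{hor}}$ is genuinely a globally defined holomorphic $\pi$-preserving horizontal vector field. Since $v$ is $\pi$-compatible, there is a holomorphic vector field $w$ on $X$ with $\pi_*(v_p) = w_{\pi(p)}$ for all $p$; the connection $D$ then provides a canonical horizontal lift $\tilde w$ of $w$ to $E$, which is holomorphic because $D$ is holomorphic, and by construction $\pi_* \tilde w = w$, so $\tilde w$ preserves $\pi$. Setting $v_{\mathrm{hor}} := \tilde w$, the difference $v - v_{\mathrm{hor}}$ then projects to $0$ under $\pi_*$, hence is vertical. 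Both summands are holomorphic vector fields on $E$, so the sum of the two approximating sequences (which converge in the compact-open topology, in which addition is continuous) approximates $v$ by Lie combinations of complete vector fields.

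The genuine work has already been done in Lemma \ref{horizontal} and Lemma \ref{vertical}; the present proposition is essentially the bookkeeping that glues them together. The only place where one must be slightly careful is the role of the rank hypothesis: it enters exclusively through Lemma \ref{vertical}, whose proof invokes the density property of the fiber $\C^r$, valid only for $r \ge 2$. This is exactly why the line bundle case ($r = 1$) is deferred to a separate section, as announced in the introduction. I therefore expect no real obstacle here beyond stating the splitting cleanly.

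\begin{proof}
  Let $D$ be a holomorphic connection on $E$; such a connection exists
  because $X$ is Stein.

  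Let $v$ be a $\pi$-compatible holomorphic vector field on $E$.
  By definition of $\pi$-compatibility there is a holomorphic vector field
  $w$ on $X$ with $\pi_*(v_p)=w_{\pi(p)}$ for all $p\in E$.
  Let $v_{\mathrm{hor}}$ denote the horizontal lift of $w$ with respect to
  the connection $D$. Since $D$ is holomorphic, $v_{\mathrm{hor}}$ is a
  holomorphic vector field on $E$, and by construction
  $\pi_*(v_{\mathrm{hor}})=w$, so $v_{\mathrm{hor}}$ preserves $\pi$ and is
  horizontal.

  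Set $v_{\mathrm{vert}}:=v-v_{\mathrm{hor}}$. This is a holomorphic vector
  field on $E$ with $\pi_*(v_{\mathrm{vert}})\equiv 0$, hence it is a
  vertical vector field.

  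By Lemma \ref{horizontal}, $v_{\mathrm{hor}}$ can be approximated by Lie
  combinations of complete vector fields on $E$. Since $\operatorname{rank}(E)\ge 2$,
  Lemma \ref{vertical} shows that $v_{\mathrm{vert}}$ can be approximated by
  Lie combinations of complete (vertical) vector fields on $E$.

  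Adding the two approximating sequences, and using that addition is
  continuous for the topology of locally uniform convergence, we conclude
  that $v=v_{\mathrm{hor}}+v_{\mathrm{vert}}$ can be approximated by Lie
  combinations of complete vector fields on $E$.
\end{proof}
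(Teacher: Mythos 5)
Your proof is correct and follows essentially the same route as the paper: decompose the $\pi$-compatible field into the horizontal lift of its projection plus a vertical remainder, then apply Lemma \ref{horizontal} and Lemma \ref{vertical} to the two summands. The paper states this decomposition in one line; your version merely spells out why the splitting is well defined, which is a harmless (and welcome) elaboration.
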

\

\begin{proof}
A $\pi$-compatible vector field $W$ on $E$  can be decomposed as the sum of a vertical vector field $V$ and a $\pi$-compatible horizontal vector field $H$ .

The conclusion is obtained by applying Proposition \ref{vertical} and Lemma \ref{horizontal} to $V$ and $H$, respectively.
\end{proof}

\begin{theorem}\label{no-inter}
  Let $\pi:E\to X$ be a holomorphic vector bundle
  over a Stein manifold $X$.

  Let $L$ be the Lie algebra of all holomorphic vector fields on the manifold
  $E$ and let $P$ denote the subalgebra of $\pi$-preserving vector fields.

  Then there does not exist any intermediate closed Lie algebra, i.e., if
  $H$ is a closed Lie subalgebra of $L$ with $P\subset H$, then either
  $H=P$ or $H=L$.
\end{theorem}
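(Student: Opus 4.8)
I would reduce the whole statement to a claim about the "projection along $\pi$'' of vector fields. For $v\in L$ write $\rho(v):=\pi_*v$, a section of $\pi^*TX\to E$; this defines a $\CC$-linear map $\rho:L\to\Gamma(E,\pi^*TX)$ with kernel the vertical vector fields $\mathrm{Vert}$. Since $\mathrm{Vert}\subset P\subset H$ we get $H=\rho^{-1}(\rho(H))$, and likewise $\rho(P)=\Gamma(X,TX)$ (the fibrewise constant sections) while $\rho(L)=\Gamma(E,\pi^*TX)$ (lift along a holomorphic connection, which exists since $X$ is Stein). Thus $P\subsetneq H$ says exactly that $N:=\rho(H)$ is a closed subspace strictly containing $\Gamma(X,TX)$, and it suffices to prove $N=\Gamma(E,\pi^*TX)$. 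The Euler field $\Theta$ is vertical, hence lies in $H$, so $H$ is $\mathrm{ad}(\Theta)$-invariant and the Corollary to the Peter--Weyl theorem gives $N=\overline{\bigoplus_{m\ge-1}N_m}$ with $N_m$ the part of $N$ fibrewise homogeneous of degree $m$. A local computation shows $P_m=L_m$ for $m\le 0$, whereas for $m\ge 1$ every element of $P_m$ is vertical; hence $P\subsetneq H$ forces $N_m\neq 0$ for some $m\ge 1$. Bracketing such an element with the complete vertical translation fields $\hat\sigma$ attached to sections $\sigma\in\Gamma(X,E)$ (which lie in $P$) lowers the fibre-degree by one; choosing the $\sigma$'s with prescribed values at a suitable point (possible since $X$ is Stein) keeps the result nonzero, so after $m-1$ steps we get $N_1\neq 0$.

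\textbf{Structure of $N_1$.} Identify the fibrewise-linear sections of $\pi^*TX$ with $\Gamma(X,\mathrm{Hom}(E,TX))$, so $N_1$ becomes a closed subspace there. Bracketing $v\in H$ with $\rho(v)=\mathcal A$ against the complete linear vertical fields, which correspond to sections $B\in\Gamma(X,\mathrm{End}(E))$, one computes $\rho([v,\cdot])\leftrightarrow -\mathcal A\circ B$; taking $B=g\cdot\mathrm{id}_E$ shows $N_1$ is a closed $\Olo_X$-submodule. Hence $N_1=\Gamma(X,\mathcal S)$ for a coherent subsheaf $\mathcal S\subseteq\mathrm{Hom}(E,TX)$ (using Theorem~\ref{dense-sections} and closedness), and stability under precomposition with all of $\mathrm{End}(E_x)$ forces $\mathcal S(x)=\mathrm{Hom}(E_x,U_x)$ fibrewise for subspaces $U_x\subseteq T_xX$. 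Next, bracketing $v$ against the horizontal lifts $\tilde\xi$ of vector fields $\xi$ on $X$ (these lie in $P$) shows, after discarding vertical parts and absorbing the connection-form terms into the $\Olo_X$-module already generated, that $\mathcal S$ is invariant under a holomorphic connection on $\mathrm{Hom}(E,TX)$. A coherent subsheaf of a holomorphic vector bundle that carries a connection is a subbundle, so $\mathcal S=\mathrm{Hom}(E,\mathcal U)$ for a holomorphic subbundle $\mathcal U\subseteq TX$ with $\mathcal U\neq 0$.

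\textbf{The crux: $\mathcal U=TX$.} Fix $x_0\in X$. Choose $\eta\in\Gamma(X,\mathcal U)$ with $\eta(x_0)\neq 0$ and a fibrewise-linear function $\lambda$ on $E$ with $\lambda|_{E_{x_0}}\neq 0$; then $v':=\lambda\tilde\eta$ satisfies $\rho(v')=\lambda\,\pi^*\eta\in\Gamma(X,\mathrm{Hom}(E,\mathcal U))=N_1$, so $v'\in H$. Given any $\zeta\in\Gamma(X,TX)$, pick $h\in\Olo_X$ with $h(x_0)=0$ and $(\eta h)(x_0)\neq 0$ (possible since $X$ is Stein), and set $w:=(h\circ\pi)\tilde\zeta\in P$. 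Then $[v',w]\in H$, and expanding $[\lambda\tilde\eta,(h\circ\pi)\tilde\zeta]$ via the Leibniz rule one finds $\rho([v',w])=\lambda\,((\eta h)\circ\pi)\,\pi^*\zeta$ plus two terms each divisible by $h\circ\pi$; restricted to $E_{x_0}$ this is the nonzero $\mathrm{Hom}(E_{x_0},T_{x_0}X)$-map $e\mapsto(\eta h)(x_0)\,\lambda(e)\,\zeta(x_0)$, whose image is $\langle\zeta(x_0)\rangle$. Since $\rho([v',w])\in N_1=\Gamma(X,\mathcal S)$, its value at $x_0$ lies in $\mathcal S_{x_0}=\mathrm{Hom}(E_{x_0},\mathcal U_{x_0})$, so $\zeta(x_0)\in\mathcal U_{x_0}$. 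As $\zeta$ and $x_0$ are arbitrary, $\mathcal U=TX$, i.e.\ $N_1=\Gamma(X,\mathrm{Hom}(E,TX))$.

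\textbf{Bootstrapping and conclusion.} For $m\ge 2$, bracket elements of $H$ with $\rho$-value in $N_1$ against vertical fields of degree $m$, i.e.\ sections of $\mathrm{Sym}^mE^\vee\otimes E$: the resulting $\rho$-values span $\mathrm{Sym}^mE^\vee\otimes TX$ at every point of $X$, and (as in Step 2) $N_m$ is an $\Olo_X$-module, so Theorem~\ref{dense-sections} gives $N_m=\Gamma(X,\mathrm{Sym}^mE^\vee\otimes TX)$. Together with $N_0=\Gamma(X,TX)$ and $N_{-1}=0$ this yields $N=\overline{\bigoplus_m N_m}=\Gamma(E,\pi^*TX)$, whence $H=\rho^{-1}(N)=L$. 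I expect the two genuinely delicate points to be: (i) systematically passing from the local bracket computations to global statements on $X$ and on $E$ — discarding the vertical corrections (which always lie in $\mathrm{Vert}\subset H$) and absorbing connection-form contributions into modules already produced, in the spirit of the bookkeeping in the proof of Lemma~\ref{vertical}; and (ii) the step $\mathcal U=TX$, where the Lie bracket with the \emph{full} algebra $P$ (not merely with $\mathrm{Vert}$) is essential and where one must already know $\mathcal S$, hence $\mathcal U$, to be a subbundle before evaluating sections at $x_0$.
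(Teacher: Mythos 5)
Your strategy is sound and shares all the key ingredients with the paper's proof: the Euler grading together with Peter--Weyl to reduce to fibrewise-homogeneous elements, lowering the degree by bracketing with the vertical translation fields $\hat\sigma\in D_{-1}\subset P$, the $\Olo_X$-module structure of $H$ obtained from $[v,h\Theta]=hv$, pointwise generation by bracketing with elements of $P$, and Theorem~\ref{dense-sections} to pass from pointwise spanning to all sections. The organization differs in two respects. First, where you bootstrap degree by degree ($N_1$, then $N_2$, \dots), the paper, once it has a degree-one element of ``order zero'' at $p$, produces elements of $H$ whose values at $p$ are $y^\beta\partial/\partial x_l$ for arbitrary $\beta$ and $l$ via the double bracket $[z_\beta,[v,\tilde w_m]]$, so all degrees are generated at once. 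Second, and more importantly, the two arguments diverge at the delicate point you yourself flag: guaranteeing that the nonzero degree-one piece does not vanish at the point where you want to evaluate. The paper handles this with a minimal-vanishing-order argument (bracketing with horizontal lifts of base vector fields strictly lowers the order unless it is already zero), while you invoke ``a coherent subsheaf of a bundle carrying a connection is a subbundle.''

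That last step is the one place where your write-up is not yet a proof. The operator $\mathcal A\mapsto\rho([\tilde\xi,v])$ is a first-order operator with symbol $\xi$, but it is \emph{not} a connection: it fails to be $\Olo_X$-linear in $\xi$, since replacing $\xi$ by $f\xi$ produces the extra term $(df\circ\mathcal A)\otimes\xi$ --- exactly the term you later exploit in your crux step, and one which need not lie in $\mathrm{Hom}(E,\mathcal U)$ a priori. The constant-rank conclusion can still be salvaged: a coherent subsheaf preserved by first-order operators whose symbols span every tangent space has locally constant rank, by the standard wedge/determinant argument applied to local generators. But this must be argued, and ``invariant under a holomorphic connection'' is not literally what you have. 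Alternatively, you could replace this entire step by the paper's order-minimality argument, which uses only brackets already at your disposal and avoids any subbundle statement.
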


\begin{proof}
  Fix $p\in X$. In local coordinates near $p$, vector fields on $E$ may be
  written as
  \begin{equation}\label{loc-vf}
  v=\sum_{\alpha,k} f_{\alpha,k}(x)y^\alpha\frac{\partial}{\partial x_k}
  + \sum_j g_{j}(x,y)\frac{\partial}{\partial y_j}
  \end{equation}
  where $x=(x_1,\ldots,x_n)$ are local coordinates on the base
  and $y=(y_1,\ldots,y_r)$ local coordinates in the
  fiber direction with $\alpha=(\alpha_1,\ldots,a_k)$ being a
  multi-index. As usual, $|\alpha|=\sum_j\alpha_j$.

  Note that $v\in P$ if and only if
  $f_{\alpha,k}\equiv 0$ for $|\alpha|>0$.
    
  Thus any element $w$  in $P$ may locally be written in the form
    \begin{equation}\label{loc-pvf}
  w=\sum_{k} f_{0,k}(x)\frac{\partial}{\partial x_k}
  + \sum_j g_{j}(x,y)\frac{\partial}{\partial y_j}
    \end{equation}
  
  Consequently,
  elements in $L/P$ may be represented as
  \begin{equation}\label{loc-qvf}
  \sum_{\substack{\alpha,k\\ \alpha\ne 0}}
  f_{\alpha,k}(x)y^\alpha\frac{\partial}{\partial x_k}
  \end{equation}

  Assume that $H\ne P$.
  We consider
  the adjoint action of the Euler vector field
  \[
  \Theta=\sum_j y_j\frac{\partial}{\partial y_j}
  \]
  For $v$ as in \eqref{loc-vf} we have
  \[
  [\Theta,v]=
  \sum_{\alpha,k} |\alpha| f_{\alpha,k}(x)y^\alpha\frac{\partial}{\partial x_k}
  + \sum_j (\theta(g_j)-g_j)\frac{\partial}{\partial y_j}
  \]

  Hence $[\Theta,v]\in P$ for $v\in P$ and we obtain an induced
  action $ad(\Theta)$ on $L/P$ via $v\mapsto [\Theta,v]$.
  This action is given as
  \[
  ad(\Theta)(w)=
  \sum_{\substack{\alpha,k\\ \alpha\ne 0}}
  |\alpha| f_{\alpha,k}(x)y^\alpha\frac{\partial}{\partial x_k}
  \text{ for }
  w=  \sum_{\substack{\alpha,k\\ \alpha\ne 0}}
  f_{\alpha,k}(x)y^\alpha\frac{\partial}{\partial x_k}
  \]
  Note that $\Theta$ is complete and yields a $\C^*$-action on $L/P$
  given as
  \[
\C^ *\ni   \lambda:w\mapsto
  \sum_{\substack{\alpha,k\\ \alpha\ne 0}}
  \lambda^{|\alpha|} f_{\alpha,k}(x)y^\alpha\frac{\partial}{\partial x_k}
  \]
  with $w$ as in \eqref{loc-qvf}.
  $\C^*$ is a reductive group and $L$ is a Fr\'echet space.
  Hence we may apply the Peter-Weyl Theorem (Theorem \ref{pweyl}).
  This theorem implies that, given a $\C^ *$-action $\mu$ on a Fr\'echet space
  $V$, the subspace $\oplus_{d\in\Z} V_d$ is dense in $V$ where
  \[
  V_d=\{v\in V: \mu(\lambda)(v)=\lambda^d v\}.
  \]
  Since $L$ is a Fr\'echet space and $P$ and $H$ are closed invariant subspaces,
  the quotient space $H/P$ is a Fr\'echet space  on which $\C^*$ acts.

  It follows that if we take $w\in H/P$ and write it as in \eqref{loc-qvf}, we have
  \[
  \sum_{|\alpha|=c}f_{\alpha,k}(x)y^\alpha\frac{\partial}{\partial x_k}\in H/P
  \ \forall c\in\N.
  \]

  In other words, if $H\ne P$, then
  $H/P$ contains a non-zero vector field $w$ which is homogeneous in the
  $y$-graduation, i.e., $\exists m\in\N\setminus\{0\}$:
  \[
  H\ni w=\sum_{\substack{\alpha,k\\ |\alpha|=m}}
  f_{\alpha,k}(x)y^\alpha\frac{\partial}{\partial x_k}
  \quad \text{ modulo $P$}
  \]

  The $\C^*$-action induced by the Euler vector field yields
  gradings on $L$, $H$, $P$ and the algebra of vertical vector fields
  $V$.

  For any $d\in \N$ the vector spaces $L_d$, $P_d$ and $V_d$ can be realized
  as spaces of global section on $X$ of coherent sheaves $\sL_d$,
  $\sP_d$, $\sV_d$, e.g., for any open subset $U\subset X$, we define
  $\sP_d(U)$ as the set of all vector fields $w$ on $\pi^ {-1}(U)\subset E$
  which are in the form \eqref{loc-pvf} and satisfy $[\Theta,w]=dw$.
  
  Similarily the quotient space $(L/P)_d=L_d/P_d$ arises as space of
  global sections
  for a coherent sheaf $\Sc_d$ ($d\in \N$).

  Using this notions, $H\ne P$ implies that
  there exists a number $d\in\N$
  such that $(L/P)_d$ has non-zero intersection with $H/P$.

  Let us now consider vector fields which locally may be written as
  \[
  u=\sum_j c_j(x)\frac{\partial}{\partial y_j}
  \]
  The family of these vector fields define a coherent sheaf
  $\mathcal A=\sV_1$ on $X$. Since $X$ is Stein, this sheaf is spanned
  by global sections. Fix a non-zero such vector field $u$.

  The adjoint action $ad(u):v\mapsto [u,v]$ preserves $P$ and
  defines a morphism of the graded Lie algebra $L/P$ of degree $-1$,
  i.e.,
  \[
  ad(u):(L/P)_{k+1}\mapsto (L/P)_k
  \]

  Observe that $u\in P$, because it is a vertical vector field.
  Therefore
  \[
  ad(u):(H/P)\mapsto (H/P)
  \]

  Let
  \[
  v=
  \sum_{\substack{\alpha,k\\ |\alpha|=d}}
  f_{\alpha,k}(x)y^\alpha\frac{\partial}{\partial x_k}
  \]
  represent a non-zero homogeneous element of $H/P$.

  Since $\mathcal A$ is spanned by global sections, there is a vector
  field $u\in P$ as above with
  \[
  \sum_{\substack{\alpha,k\\ |\alpha|=d}}
  f_{\alpha,k}(x)u\left(y^\alpha\right)\frac{\partial}{\partial x_k}\not\equiv 0
  \]

  It follows: {\em If $H/P$ contains a non-zero  homogeneous element of
  degree $d+1$, it also contains a non-zero  homogeneous element of
  degree $d$.}

  By repeated application of this argument we deduce that
  $H/P$ contains a non-zero homogeneous element of degree $1$.
  
  In other words,  for the grading of $L/P=\oplus_d (L/P)_d$
  defined by the eigenspace
  decomposition with respect to $ad(\Theta)$, we have
  $H/P$ has non-zero intersection with $(L/P)_1$.

  Next we claim: {\em There is such a homogeneous element $v$ of degree $1$
  in $H/P$ which may locally be written as
  \[
  v=
  \sum_{\substack{\alpha,k\\ |\alpha|=1}}
  f_{\alpha,k}(x)y^\alpha\frac{\partial}{\partial x_k}
  \]
  such that there exists an index $\alpha$ (with $|\alpha|=1$)
  such that $f_{\alpha,k}(p)\ne 0$.}

  For any $v$ as in the claim we call the minimum of the vanishing orders
  in $p$
  of the components $f_{\alpha,k}$ ($|\alpha|=1$) simply the {\em order} of $v$.
  Thus our goal is to show that $H$ contains an element $v$ of degree $1$ and
  order $0$.
  Let $N$ denote the minimal number $N\in\N_0$ for which $H$ contains an
  element of degree $1$ and order $N$ and fix such an element $v$.

  Next let $w$ be a vector field on $X$ which does not vanish in $p$.
  In appropriately chosen local coordinates near $p$ on $X$ we have
  \[
  w=\frac{\partial}{\partial x_1}
  \]
  and
  \[
  v=
  \sum_{\substack{\alpha,k\\ |\alpha|=1}}
  f_{\alpha,k}(x)y^\alpha\frac{\partial}{\partial x_k}
  =
  \sum_{j,k} f_{j,k}(x)y_j\frac{\partial}{\partial x_k}
  \]
  with $N$ being the minimal vanishing order of the $f_{\alpha,k }$.
  
  Next let $\tilde w$ be the lift of $w$ with respect to some holomorphic
  connection on the vector bundle $E\to X$. Then
  \[
  \tilde w=\frac{\partial}{\partial x_1}
  +\sum_{j,l}A_{j,l}(x)y_j\frac{\partial}{\partial y_l}
  \]
  where the $A_{j,l}$ are holomorphic functions which depend on the
  choice of the holomorphic connection.
  
  As a horizontal vector field, $\tilde w\in P$.
  Hence $[\tilde w,v]\in H$. Now
  \[
    [\tilde w,v]=\sum_{j,k} \frac{\partial f_{j,k}}{\partial {x_1}}y_j
    \frac{\partial}{\partial x_k}\text{ $+$ terms vanishing of order $\ge N$}
  \]
  By the minimality of $N$, it follows that all the $f_{j,k}$
  vanish of order at least
  $N+1$ along the integral curve of $w$ (i.e.~in the $x_1$-direction).

  But $w$ was arbitrary vector field on $X$ (with $w(p)\ne 0$).
  Hence we obtain: {\em If $N>0$, then for every integral curve of every
    vector field not vanishing in $p$ the restriction of every $f_{j,k}$
    to this curve vanishes of order at least $N+1$ in $p$.}
  The vanishing order of a holomorphic function equals the minimum of
  the vanishing order of the restriction to every such integral curve.
  Hence the assumption $N>0$ yields a contradiction to the
  minimality assumption.
  This proves the claim, i.e., $H/P$ admits an element $v\ne 0$ of degree $1$
  and order $0$.
   
  Next claim: {\em $H$ defines a sub $\O_X$-module sheaf $H_1$}:
  Indeed, if $h$ is a holomorphic function on $X$, then
  $[v,h\Theta]=hv$ for any $v\in L/P$ (note that $\Theta\in P$).
  $H$ is a Lie subalgebra, and $\Theta\in P\subset H$,
  hence $[v,h\Theta]\in H$ for every $v\in H$.

  Thus we have
  \[
  H\ni v=\sum_{j,k}f_{j,k}(x)y_j
  \frac{\partial}{\partial x_k}
  \]
  such that $\exists j,k:f_{j,k}=1$.
  Wlog $f_{1,1}=1$.

  Fix $m$.
  Since $X$ is Stein and the tangent sheaf is coherent, we can find
  a global vector field $w$ on $E$ such that locally
  \[
  w=w_m=\sum_{j,l} h_{j,l}(x) x_j  \frac{\partial}{\partial x_l}
  \]
  with
  \[
  h_{j,l}(p)=\begin{cases} 1 & \text{ if $j=1,l=m$}\\
  0 & \text{ else. }\\
  \end{cases}
  \]
  We lift $w_m$ to a horizontal vector field $\tilde w_m$ on $E$
  (with respect to some holomorphic connection).
  \[
  \tilde w_m=\sum_{j,l}
  h_{j,l}(x) x_j  \left(\frac{\partial}{\partial x_l}
  +\sum_{i,k} A_{i,k,l}(x)y_i\frac{\partial}{\partial y_k}
  \right).
  \]
  We recall: A holomorphic connection on a vector bundle is unique
  only up to
  adding a section in $End(E)\tensor\Omega^1$. Since $X$ is a
  Stein manifold, the vector bundle $End(E)\tensor\Omega^1$
  is spanned by global sections. 
  It follows that by an appropriate choice of the holomorphic
  connection we may assume that $A_{i,k,l}(p)=0$.

  Then
  \[
    [v,\tilde w_m]=\sum_{\beta,r} g_{\beta,r}(x)
    y^ \beta\frac{\partial}{\partial x_r}
    \text{ modulo vertical vector fields and terms vanishing in $p$}
  \]
  with
  \[
  g_{\beta,r}=
  \begin{cases} 1 & \text{ if $\beta=e_1,r=m$}\\
  0 & \text{ else. }\\
  \end{cases}
  \]

  In other words, there is (for every $m$)
  an element $u=[v,\tilde w_m]\in H$
  with
  \[
  u(p)=y_1 \frac{\partial}{\partial y_m}.
  \]
  
  Recall that  $\sV_d$ is a coherent sheaf on the Stein manifold $X$
  (for every $d\in   \N$).
  Fix a multi-index $\beta$.
  Then there exists a vector field $z$ with
  \[
  z=z_{\beta}=
  \sum_{\gamma,l} b_{\gamma,l}(x)
  y^{\gamma}
  \frac{\partial}{\partial y_l}
  \]
  with
  \[
  b_{\gamma,l}(p)=\begin{cases} 1 & \text{ if $\beta=\gamma,l=1$}\\
  0 & \text{ else }\\
  \end{cases}
  \]

  Now
  \[
    [z,u](p)=y^ \beta \frac{\partial}{\partial x_l}
    \]
    Since $\beta$ and $l$ are arbitrary, we see that elements in $H$ span
    the vector bundle corresponding to the sheaf $\sL_d$ in $p$ for
    every $d$. Note that $p$ was also arbitrary. Combined with the fact that
    $H$ is the set of sections in a $\O_X$-module sheaf
    $\sH$ we may deduce that
    $\sH_d=\sL_d$ for all $d$, which implies that $H_d=L_d$.

    Finally, recall that $H$ is assumed to be closed in $L$.
    Therefore $L_k\subset H\ \forall k$ implies $H=L$.
    \end{proof}

\section{Supporting Lemmas}

The present section and the one immediately following it establish a few facts needed in the proof of the line bundle case.

\begin{lemma}\label{lem-x-0}
  Let $V,W ,Y$ be vector fields on a manifold, and $\phi$ a function.

  Assume that $V\phi=0$, $W\phi=0$, $[W,Y]=0$ and $[W,V]=V$.

  Define $\zeta=VY\phi$.

  Then $W\zeta=\zeta$
  and $\zeta W$ is contained in the Lie algebra generated
  by $\phi W$, $Y$, $\phi Y$ and $V$.
\end{lemma}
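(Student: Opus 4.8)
The plan is to prove both assertions by direct manipulation of Lie brackets, using repeatedly the Leibniz identities $[fA,B]=f[A,B]-(Bf)A$ and $[A,fB]=f[A,B]+(Af)B$ for vector fields $A,B$ and a function $f$, and feeding in the four hypotheses $V\phi=0$, $W\phi=0$, $[W,Y]=0$, $[W,V]=V$ at the appropriate moments.

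First I would establish $W\zeta=\zeta$. Writing $\zeta=V(Y\phi)$ and using $WV=[W,V]+VW$ applied to the function $Y\phi$, one gets $W\zeta=[W,V](Y\phi)+V\bigl(W(Y\phi)\bigr)$. The hypothesis $[W,V]=V$ turns the first summand into $V(Y\phi)=\zeta$; for the second, $W(Y\phi)=[W,Y]\phi+Y(W\phi)=0$ because $[W,Y]=0$ and $W\phi=0$, so the second summand vanishes. Hence $W\zeta=\zeta$.

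For the membership statement, let $\mathfrak g$ be the Lie algebra generated by $\phi W$, $Y$, $\phi Y$, $V$, and I would reach $\zeta W$ by a short chain of brackets. Since $V\phi=0$ and $[W,V]=V$, we have $[\phi W,V]=\phi V$, so $\phi V\in\mathfrak g$. Bracketing with $Y$ gives $[Y,\phi V]=(Y\phi)V+\phi[Y,V]$, while $[\phi Y,V]=\phi[Y,V]$ (again using $V\phi=0$); subtracting yields $(Y\phi)V=[Y,[\phi W,V]]-[\phi Y,V]\in\mathfrak g$. Separately, $[\phi W,Y]=-(Y\phi)W$ because $[W,Y]=0$, and one last bracket with $V$ gives $[[\phi W,Y],V]=\zeta W-(Y\phi)V$. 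Combining,
\[
\zeta W=[[\phi W,Y],V]+[Y,[\phi W,V]]-[\phi Y,V]\in\mathfrak g .
\]

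The one point requiring care — and what I would flag as the crux — is that the natural candidate bracket $[[\phi W,Y],V]$ delivers $\zeta W$ only modulo the spurious term $(Y\phi)V$, so one is forced to exhibit $(Y\phi)V$ inside $\mathfrak g$ on its own. This is exactly where the generator $\phi Y$ is needed: it lets us cancel $\phi[Y,V]$ in the identity $(Y\phi)V=[Y,[\phi W,V]]-[\phi Y,V]$. Apart from this, the argument is bookkeeping with the Leibniz rule, and the only real risk is mishandling a sign or invoking the wrong hypothesis in a given term.
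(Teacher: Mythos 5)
Your proof is correct and follows essentially the same route as the paper: a direct computation for $W\zeta=\zeta$, and for the membership claim your combination $[[\phi W,Y],V]+[Y,[\phi W,V]]-[\phi Y,V]$ is exactly the Jacobi-identity expansion of the paper's expression $[\phi W,[Y,V]]-[\phi Y,V]=\zeta W$. Nothing further is needed.
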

  
\begin{proof}  
  \begin{align*}
    &\phantom{=}[\phi W,[Y,V]]-[\phi Y,V]\\
    &=-[Y,[V,\phi W]]-[V,[\phi W,Y]]-\phi[Y,V]+\underbrace{(V\phi)}_{=0}Y\\
    &=-[Y,\phi
      \underbrace{[V,W]}_{=-V}]-
    [Y,\underbrace{(V\phi)}_{=0}W]
    - [V,\phi\underbrace{[W,Y]}_{=0}]+[V,(Y\phi)W]-\phi[Y,V] \\
    &=[Y,\phi V]-0
    -0+(VY\phi)W+(Y\phi)\underbrace{[V,W]}_{=-V}-\phi[Y,V]\\
&=
    \colorbox{yellow}{$\phi[Y,V]$}
    +\colorbox{pink}{$(Y\phi)V$}
   +(VY\phi)W-
    \colorbox{pink}{$(Y\phi)V$}
    -\colorbox{yellow}{$\phi[Y,V]$}
    \\
    &=\underbrace{(VY\phi)}_{\zeta}W \\
    &=\zeta W\\
  \end{align*}
  Thus $\zeta W$ is in the Lie algebra generated by
  $\phi W$, $Y$, $\phi Y$ and $V$.

  On the other hand
  \begin{align*}
    W\zeta &= WVY\phi\\
    &= WY\underbrace{V\phi}_{=0}+W[V,Y]\phi\\
    &= 0+[V,Y]\underbrace{W\phi}_{=0}+[W,[V,Y]]\phi\\
    &= -\left([V,\underbrace{[Y,W]}_{=0}]+
      [Y,\underbrace{[W,V]}_{=V}]\right)\phi\quad\text{\sl by Jacobi-Identity}\\
      &=-[Y,V]\phi=-\left(Y\underbrace{V\phi}_{=0}-VY\phi\right)=\zeta
  \end{align*}
\end{proof}

\begin{lemma}\label{lem-x}
  Let $X$ be a smooth affine variety of dimension at least two,
  $V$ a non-zero LND on $X$. Then there exists an algebraic vector field $Y$
  and a regular function $f$ on $X$ such that
  $Vf\equiv 0$ and $VYf\not\equiv 0$.
\end{lemma}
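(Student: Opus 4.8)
The statement to be proved is Lemma~\ref{lem-x}: on a smooth affine variety $X$ of dimension $\ge 2$ carrying a nonzero LND $Y$, there exist an algebraic vector field $V$ and a regular function $f$ with $Vf\equiv 0$ but $VYf\not\equiv 0$. The plan is to reduce to a local normal-form computation in the spirit of Lemma~\ref{ex-stein-vf}. First I would pick a point $p\in X$ at which $Y(p)\ne 0$; since $Y$ is nonzero and $X$ is a variety, such points form a dense open set. In a suitable analytic (or étale-local) chart near $p$ one can straighten $Y$ to $\partial/\partial x_1$, and since $\dim X\ge 2$ there is a second coordinate $x_2$. The natural candidate is to take $f$ to be a regular function with $f(x)=x_2+(\text{higher order})$ near $p$ so that $Yf=\partial f/\partial x_1$ vanishes to high order at $p$ in the $x_1$-direction, and $V$ a regular vector field with $V=x_2\,\partial/\partial x_1+(\text{higher order in }x_2)$ arranged so that $Vf\equiv 0$ while $VYf$ is nonzero.

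The cleaner route, which avoids fighting with higher-order correction terms, is to use the algebraic structure of the LND directly. Since $Y$ is a nonzero LND on the affine variety $X$, its ring of invariants $A=\ker Y\subset\C[X]$ is large: by Rentschler/standard LND theory there exists a local slice, i.e. an element $s\in\C[X]$ with $Ys=:a\in A$, $a\ne 0$, and (after inverting $a$) a slice $\tilde s$ with $Y\tilde s=1$; moreover $\C[X][a^{-1}]=A[a^{-1}][\tilde s]$ is a polynomial ring in $\tilde s$ over $A[a^{-1}]$. Now I would choose $f\in A=\ker Y$ — automatically $Yf=0$ — but this makes $VYf=0$ for every $V$, so instead one must choose $f$ \emph{not} invariant. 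The fix: pick $f\in A$ nonconstant (possible because $\dim X\ge 2$ forces $\operatorname{trdeg}A\ge 1$), and separately pick $V$ so that $Vf\equiv 0$ but $V$ moves things transversally to the $Y$-direction. Concretely, take $V$ to be a vector field tangent to the fibers of $f$ (these exist in abundance since $X$ is affine and smooth and $df\not\equiv 0$, so $\ker df$ is a rank-$(\dim X-1)\ge 1$ subsheaf spanned by global sections) and chosen so that at a generic point $V$ has a nonzero component in the $Y$-direction — equivalently $Y$ and $V$ and the level set of $f$ are in "general position" at some point. Then $Vf\equiv 0$ by construction, and $VYf$: wait, $Yf=0$ again. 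So this choice also fails.

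The correct choice must therefore have $f\notin\ker Y$ and $f\notin$ any $V$-invariant subring simultaneously. Using the slice: set $f=\tilde s$ (or a regular multiple clearing denominators, e.g. $f$ with $Yf=a\in A\setminus\{0\}$), so $Yf=a$ is a nonconstant invariant. Now I need $V$ with $Vf\equiv 0$ and $Va\not\equiv 0$. The level hypersurface $\{f=0\}$ (or the fibers of $f$) again carries many tangent vector fields $V$ since $\dim X\ge 2$; among them I must find one with $Va\not\equiv 0$, which holds unless \emph{every} vector field tangent to the fibers of $f$ annihilates $a$ — but that would force $a$ to be a function of $f$ alone on a dense open set, i.e. $a\in\C[f]\cap A$; since $Ya=0$ while $Yf=a\ne$ const, $a$ cannot be a nonconstant polynomial in $f$ unless it is constant, a contradiction. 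Hence such $V$ exists, giving $Vf\equiv 0$ and $VYf=Va\not\equiv 0$ as required.

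\textbf{Main obstacle.} The delicate point is globalization: the slice $\tilde s$ and the polynomial structure $\C[X][a^{-1}]=A[a^{-1}][\tilde s]$ live only over the open set $\{a\ne 0\}$, so one must clear denominators to produce an honest regular function $f$ on all of $X$ with $Yf\in A$ nonconstant, and then verify that a globally regular vector field $V$ tangent to the fibers of $f$ with $V(Yf)\not\equiv 0$ can be found using only that $X$ is smooth affine (coherence plus global generation of the appropriate subsheaf of the tangent sheaf). The argument that "$a$ cannot be a nonconstant polynomial in $f$" uses $Ya=0\ne Yf$ and is robust, but the passage from "some analytic-local vector field works" to "a global algebraic $V$ works" is exactly where the affine hypothesis and a Cartan-type global generation statement are essential, just as in Lemma~\ref{ex-stein-vf} and Theorem~\ref{dense-sections}.
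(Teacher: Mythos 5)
Your route is genuinely different from the paper's. The paper's proof does not work with the given $Y$ at all: it first fixes a vector field $V$ nonvanishing at a point $q$ and a nonconstant $V$-invariant function $f$ with $(df)_q\ne 0$, and then \emph{constructs} $Y$ with prescribed $2$-jet at $q$ (namely $Y=x_1\partial/\partial x_2+O(2)$ in coordinates where $V=\partial/\partial x_1$), so that $(VYf)(q)=\tfrac{\partial f}{\partial x_2}(q)\ne 0$. This matches how the lemma is actually invoked in Proposition~\ref{l-vertical}, where $Y_0$, $V_0$ and $\phi_0$ are all chosen. Your argument instead treats the given LND seriously and manufactures $f$ and $V$ from its kernel and a preslice; if completed it proves the lemma as literally stated (for an arbitrary prescribed $Y$), which is stronger, and your globalization step -- the coherent subsheaf $\ker\bigl(TX\to\O_X,\ V\mapsto Vf\bigr)$ is generated by global sections because $X$ is affine, so a pointwise solution upgrades to a global algebraic $V$ -- is exactly the right mechanism and parallels Lemma~\ref{ex-stein-vf} and Theorem~\ref{dense-sections}.

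There is, however, one genuine gap. You take $f$ to be a (pre)slice, so $Yf=a\in\ker Y\setminus\{0\}$, and you assert that $a$ is a \emph{nonconstant} invariant; your closing contradiction (``since $Ya=0$ while $Yf=a\ne\mathrm{const}$\,'') depends on this. But for the most natural choice, a genuine slice $\tilde s$ with $Y\tilde s=1$, the element $a$ is the constant $1$, and then $VYf=Va\equiv 0$ for \emph{every} $V$, so the construction yields nothing. The repair is short but must be made: either observe that $Y(\C[X])\not\subset\C$ when $\dim X\ge 2$ (if $Yg$ were constant for all $g$, the Leibniz rule applied to $g_0h$ with $Yg_0\ne 0$ would force $\C[X]=\C+\C g_0$, which is absurd), or, concretely, take a preslice $s$ with $Ys=a\ne 0$ and, in case $a$ is constant, replace $f=s$ by $f=bs$ for some nonconstant $b\in\ker Y$ (such $b$ exists since $\operatorname{trdeg}\ker Y=\dim X-1\ge 1$); then $Yf=ba$ is a nonconstant invariant. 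With that fixed, the rest of your argument is correct: if every algebraic $V$ tangent to the fibres of $f$ annihilated $Yf$, then $d(Yf)$ would be proportional to $df$ on the dense open set $\{df\ne 0\}$, so $Yf$ would locally be a function $P(f)$, and $0=Y(Yf)=P'(f)\,Yf$ would force $Yf$ to be constant, a contradiction; hence some such $V$ satisfies $Vf\equiv 0$ and $VYf\not\equiv 0$.
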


\begin{proof}
  Let $f$ be a  non-constant function which is invariant under
  the flow of $V$ i.e. $Vf\equiv 0$.
  (This is possible, because the flow of $V$ is unipotent and
  $\dim X>1$.)
  Let $q\in X$ be any point such that
  $(df)_q\ne 0$ and $V(q) \ne 0$. Then we may chosen local holomorphic coordinates
  near $q$ such that
  \[
  V=\frac{\partial}{\partial x_1}\text{ and }
    \frac{\partial f}{\partial x_2}(q)= 1
    \]
    Now we choose $Y$ such that near $q$ we have
    \[
    Y=x_1\frac{\partial }{\partial x_2}+
    \left \{\text{terms vanishing with multiplicity $\ge 2$ in $q$}
    \right\}
    \]
    (This choice is possible, because $X$ is affine.)
    Our choice of $Y$ implies $(VYf)(q)=1\ne 0$.
\end{proof}

\begin{proposition}\label{l-vertical}
  Let $\pi:E \to X$ be a holomorphic vector bundle over a flexible smooth
  affine variety, $\dim(X)\ge 2$.
  Let $\Theta$ denote the Euler vector field on $E$.
  Let $k\ge 1$.  Let $D_k$ denote the space of
  vector fields on $E$ of weight $k$
  (i.e.,~$[\Theta,V]=kV$ for $V\in D_k$).
  Let $V_k\subset D_k$ denote
  the subspace of {\em vertical} ones. 

  Let $A$ be a Lie subalgebra of $\Gamma(E,TE)$ which
  contains all vector fields of weight $\le 0$.

  Assume that
  \begin{equation}\label{x-2}
    (A \cap D_k)+V_k=D_k
  \end{equation}

  Then $A\cap V_k\ne\{0\}$.
\end{proposition}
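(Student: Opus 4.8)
The plan is to exploit the hypothesis \eqref{x-2} together with the flexibility of the base $X$ to produce, by a bracket computation, a nonzero vertical vector field of weight $k$ inside $A$. First I would argue that it suffices to find a single weight-$k$ vector field $V \in A$ that is \emph{not} vertical, together with a vertical weight-$k$ vector field whose bracket structure with $A$-members forces a nonzero element of $A \cap V_k$. Indeed, by \eqref{x-2} either $A \cap D_k$ already contains a nonzero vertical field (and we are done), or the projection $A \cap D_k \to D_k/V_k$ is surjective, so there is some $V \in A \cap D_k$ projecting to a prescribed nonzero class. Concretely, pick $p \in X$ and $u \in T_p X$; by Lemma~\ref{ex-stein-vf}-type reasoning (or directly since the relevant sheaves on the affine $X$ are globally generated) one finds $V \in A \cap D_k$ whose ``horizontal part'' at $p$ is a prescribed nonzero tangent vector to $X$, up to vertical correction.

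The heart of the argument is then to bracket such a $V$ against the \emph{complete} vector fields coming from flexibility. Since $X$ is flexible and $\dim X \ge 2$, Lemma~\ref{lem-x} supplies a regular function $f$ on $X$ and an algebraic field $V_0$ with $V_0 f \equiv 0$ but $V_0 Y f \not\equiv 0$ for a suitable LND $Y$; pulling $f$ back via $\pi$ gives a function $\phi = f\circ\pi$ on $E$ that is constant along the fibers. I would then invoke Lemma~\ref{lem-x-0} with $W = \Theta$ (rescaled so that $[\Theta, V] = V$ after passing to weight $1$, or more carefully tracking the weight-$k$ grading): the lemma shows that $\zeta\,\Theta$ lies in the Lie algebra generated by $\phi\Theta$, $Y$, $\phi Y$, and $V$, where $\zeta = V Y\phi$ is a nonzero function (by the choice in Lemma~\ref{lem-x}) satisfying $\Theta\zeta = \zeta$, i.e. $\zeta\Theta$ is a nonzero \emph{vertical} field. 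The weight-$0$ fields $Y$, the horizontal lift of $Y$, and $\phi\Theta$ all lie in $A$ by hypothesis (weight $\le 0$), and $V \in A$; hence $\zeta\,\Theta \in A \cap V_k$ is nonzero once one checks the weight bookkeeping places it in $D_k$.

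The step I expect to be the main obstacle is precisely this weight bookkeeping and the reduction that produces a workable $V \in A \cap D_k$ with the right leading behavior. Lemma~\ref{lem-x-0} is stated for $[W,V] = V$, i.e. weight~$1$; to apply it in weight $k$ one must either conjugate by the $\C^*$-action (replacing $V$ by a field of weight $k$ and $\Theta$ by $\tfrac1k\Theta$, noting $\tfrac1k\Theta$ is still complete and $[\tfrac1k\Theta, V] = V$), or redo the bracket computation of Lemma~\ref{lem-x-0} with the scalar $k$ inserted. One must also confirm that $\zeta\Theta$, built from $\zeta = VYf$ with $V$ of weight $k$ and $Y,\phi$ of weight $0$, indeed has weight $k$ — which follows because $Y f$ and $\phi$ are fiber-constant (weight $0$) so $\zeta$ inherits the weight $k$ of $V$, and $\Theta$ contributes a shift to make $\zeta\Theta$ vertical of the correct weight. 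Once these grading checks are in place, the non-vanishing of $\zeta$ at a generic point (from Lemma~\ref{lem-x}) gives $\zeta\Theta \ne 0$, completing the proof.
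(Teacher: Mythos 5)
Your proposal is correct and follows essentially the same route as the paper's proof: use Lemma~\ref{lem-x} to produce $V_0$, $Y$, $f$ with $V_0f\equiv 0$ and $V_0Yf\not\equiv 0$, lift and multiply by a power of a fiberwise linear function to reach weight $k$, use \eqref{x-2} to replace this field by an element $H\in A\cap D_k$ agreeing with it modulo $V_k$, and then apply the bracket identity of Lemma~\ref{lem-x-0} to land the nonzero vertical field $(HY\phi)\Theta$ in $A\cap V_k$. The ``weight bookkeeping'' issue you flag is real --- Lemma~\ref{lem-x-0} assumes $[W,V]=V$ while $[\Theta,H]=kH$ --- and your fix of replacing $\Theta$ by $\tfrac1k\Theta$ (which is still of weight $0$ and leaves $\phi\cdot\tfrac1k\Theta$, $Y$, $\phi Y$ in $A$) correctly handles a detail the paper's own proof passes over without comment.
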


\begin{proof}
  Using lemma~\ref{lem-x}
  we choose vector fields $Y_0$, $V_0$
  and a regular function
  $\phi_0$ on $X$ such that $V_0\phi_0\equiv 0$ and
  $V_0Y_0\phi_0\not\equiv 0$.

  We lift $Y_0$ and $V_0$ to horizontal vector fields $Y$ resp.~$V_1$
  on $E$ using some
  holomorphic connection and define $\phi=\phi_0\circ\pi$.
  Note that $[\Theta,Y]=0=[\Theta,V_1]$
  (Lemma~\ref{euler-hori-commute}) and $\Theta\phi=0$.

  Next we choose a non-zero
  section $\lambda\in\Gamma(X,E^*)$ and observe that
  $\lambda$ defines a fiberwise linear function on $E$.
  We define $V=\lambda^k V_1$.
  From $[\Theta,V_1]=0$ it follows that $[\Theta,V]=(\Theta \lambda^k)V_1=kV$.
  Hence $V\in D_k$.
  Due to \eqref{x-2}, $V-H\in V_k$ for some $H\in A$.

  We observe that $(Y_0\phi_0)\circ\pi=Y(\phi_0\circ\pi)=Y(\phi)$
  (as may be verified easily calculating in local coordinates).

  Similarily it is noted that
  \[
  H(Y\phi)=V(Y\phi)=\left(V_0Y_0\phi_0\right)\circ\pi\ne 0
  \]
  and
  \[
  H(\phi)=V(\phi)=(V_0\phi_0)\circ\pi\equiv 0
  \]

  Now we may conclude from lemma~\ref{lem-x-0}
  that
  \begin{equation}\label{finaleq}
  [\phi \Theta,[Y,H]]-k[\phi Y,H]=(HY\phi)\Theta
  \end{equation}

  We claim that $\phi \Theta,Y,H,\phi Y\in A$.
  $H\in A$ by the choice of $H$.
  Recall that  $[\Theta,Y]=0=\Theta\phi$, implying
  \[
    [\Theta,\phi Y] = \underbrace{(\Theta\phi)}_{=0}Y+\phi\underbrace{[\Theta,Y]}_{=0}=0.
    \quad [\Theta,\phi \Theta]=\underbrace{(\Theta\phi)}_{=0}\Theta+
    \phi\underbrace{[\Theta,\Theta]}_{=0}=0.
  \]
  Thus $\Theta$, $\phi \Theta$, $Y$ and $\phi Y$ are all of weight $0$.
  By assumption, $A$ contains all vector fields of weight $0$.

  Therefore the left hand side of \eqref{finaleq} is contained in $A$.

  On the other hand, by construction the function  $\phi$ is  constant
  along the $\pi$-fibers and $V-H$ is a {\em vertical} vector field.
  Hence $(V-H)\phi=0$ which implies that the right hand side of
  \eqref{finaleq} equals $(VY\phi)\Theta$. 
  This implies the assertion, since $V$ has weight $k$
  (implying that $(VY\phi)\Theta\in V_k$) and $VY\phi\not\equiv 0$.
\end{proof}

\begin{remark}
  The assumption $\dim(X)\ge 2$ in Proposition~\ref{l-vertical}
  is not seriously restrictive for the following reason:
  {\em
If $X$ is a one-dimensional flexible smooth affine
  variety, then $X\simeq\C$ and consequently $E\simeq\C^{N+1}$.
  
}  
\end{remark}

\section{Generalities}

Let $\pi:E\to X$ be a holomorphic vector bundle on a Stein manifold.

We decompose the space $D$ of all vector fields with respect to the weights
of the Euler vector field, i.e., let
\[
D_k=\{v\in D: [\Theta,v]=kv\}
\]

From the Jacobi identity we obtain:
\[
\forall k,m: \forall v\in D_k, w\in D_m: [v,w]\in D_{k+m}
\]

We consider the subspace of {\em vertical} vector fields
\[
V_k=\{ v\in D_k: (d\pi)_p(v)=0\ \forall p\in E\}
\]
and the quotient
\[
Q_k=D_k/V_k
\]

We observe that (locally) the flows corresponding to vector fields
in $D_{-1}\oplus D_0$ preserve the vector bundle structure
and
that those in $D_{-1}\oplus D_0\oplus\bigoplus_k V_k$
are $\pi$-compatible.

Every vector field $v\in V_{-1}\oplus V_0$ is complete,
because the associated flow is by fiber wise affine-linear
transformations.

For $k=-1$ we have
\[
D_{-1}=V_{-1}\simeq\Gamma(X,E)
\]
A vector field $v\in D_{-1}$ may be written in local coordinates
as
\[
\sum_j f_j(x)\dd{y_j}
\]
and corresponds to the section
\[
x\mapsto (x;f_1(x),\ldots,f_r(x))
\]



\begin{lemma}\label{reduce-k}
  Let $A\subset D$ be a vector subspace which is stable
  under $ad(w)$ for all $w\in D_{-1}$.
  Let $k>0$ and assume that $(A\cap D_k)\setminus V_k\ne \emptyset$.

  Then $(A\cap D_j)\setminus V_j \ne \emptyset$ for all $j\le k$.
\end{lemma}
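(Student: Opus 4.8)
The plan is to prove it by downward induction on $j$, the key point being to pass from weight $k$ to weight $k-1$ using a well-chosen $w\in D_{-1}$. So suppose $(A\cap D_k)\setminus V_k\neq\{0\}$ and fix $v\in A\cap D_k$ which is not vertical. In local coordinates near a point $p\in X$ write
\[
v=\sum_{\substack{\alpha,k'\\|\alpha|=k}} f_{\alpha,k'}(x)y^{\alpha}\frac{\partial}{\partial x_{k'}}+(\text{vertical part}),
\]
and choose $p$ together with a multi-index $\alpha_0$ with $|\alpha_0|=k$ such that $f_{\alpha_0,k'}(p)\neq 0$ for some $k'$; this is possible precisely because $v\notin V_k$, since the quotient $Q_k=D_k/V_k$ is (the space of global sections of) a coherent sheaf and a nonzero section is nonzero at some point. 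Now recall that elements of $D_{-1}\simeq\Gamma(X,E)$ act by $ad$, lowering the $y$-degree by one: writing $w=\sum_j c_j(x)\partial/\partial y_j$, one has $[w,y^{\alpha}\partial/\partial x_{k'}]=w(y^{\alpha})\,\partial/\partial x_{k'}+(\text{vertical})$, and $w(y^{\alpha})$ is a polynomial of degree $|\alpha|-1$ in $y$. Thus $ad(w)$ maps $D_k$ into $D_{k-1}$ and preserves $A$ by hypothesis, so $[w,v]\in A\cap D_{k-1}$.

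The remaining task is to choose $w\in D_{-1}$ so that $[w,v]$ is \emph{not} vertical. Since $D_{-1}$ corresponds to global sections of $E$ and $X$ is Stein, these sections span $E_p$, hence the vertical vector fields $\sum_j c_j(x)\partial/\partial y_j$ with arbitrary prescribed values $(c_j(p))_j$ are available; differentiating $y^{\alpha_0}$ with respect to the corresponding constant-coefficient vertical field gives, at $p$, a nonzero polynomial of degree $k-1$ in $y$ times $f_{\alpha_0,k'}(p)\partial/\partial x_{k'}$. Concretely, if $\alpha_0$ has $(\alpha_0)_i>0$ and $w=\partial/\partial y_i$ (realized globally as above), then $w(y^{\alpha_0})=(\alpha_0)_i\,y^{\alpha_0-e_i}\not\equiv 0$, so the horizontal part of $[w,v]$ evaluated near $p$ contains the term $(\alpha_0)_i f_{\alpha_0,k'}(x)y^{\alpha_0-e_i}\partial/\partial x_{k'}$, which is nonzero at $p$. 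One must check that contributions from the other terms $f_{\alpha,k'}$ with $|\alpha|=k$ and from the vertical part of $v$ cannot conspire to cancel this; the vertical part of $v$ contributes only to the vertical part of $[w,v]$, and among the horizontal terms one can, if necessary, first arrange by a generic linear change of fiber coordinates (or by choosing $w$ with generic constant coefficients $c_j(p)$) that no cancellation occurs at $p$. Hence $[w,v]\in(A\cap D_{k-1})\setminus V_{k-1}$.

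This completes the inductive step: from a non-vertical element of $A\cap D_k$ we have produced a non-vertical element of $A\cap D_{k-1}$, and iterating down to $j$ gives the claim for every $1\le j\le k$ (for $j\le 0$ there is nothing in the statement, as only $j\le k$ with the same positivity is relevant; in fact the argument continues to $j=1$ and the case $j=0$ would need a separate remark, but the statement only asserts $j\le k$). I expect the genuine obstacle to be the non-cancellation argument in the last step: one needs to be sure that lowering the degree via a single $ad(w)$ does not kill the non-vertical part, which is why the freedom to pick $w$ with generic values at $p$ — guaranteed by $X$ being Stein so that $\Gamma(X,E)$ spans every fiber — is essential.
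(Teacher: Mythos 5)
Your proof is correct and follows essentially the same route as the paper's: bracket the non-vertical element $v\in A\cap D_k$ with a suitable $w\in D_{-1}$ (a global section of $E$, which exists with any prescribed value at $p$ since $X$ is Stein) to produce a non-vertical element of $A\cap D_{k-1}$, and iterate. Your extra worry about cancellation is a refinement the paper glosses over, and it is easily settled: with $w=\partial/\partial y_i$ at $p$ the differentiation sends distinct monomials $y^{\alpha}$ to distinct monomials $y^{\alpha-e_i}$ (or to $0$), so the coefficient of $y^{\alpha_0-e_i}\,\partial/\partial x_{k'}$ is exactly $(\alpha_0)_i f_{\alpha_0,k'}(p)\ne 0$ and no cancellation can occur (you are also right that the conclusion is only meaningful for $j\ge 1$).
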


\begin{proof}
  Let $v\in A\cap D_k\setminus V_k$. 
  In local coordinates around some point $p$:
  \[
  v=\sum_{\alpha,j} f_{\alpha,j}(x) y^\alpha\dd{x_j}+\text{vertical terms}
  \]
  Fix $\beta$ such that $\exists j: f_{\beta,j}\ne 0$.
  Observe that elements in $D_{-1}$ are locally written as
  \[
  w=\sum_h g_h(x)\dd{y_h}
  \]
  Now we choose an element $w\in D_{-1}$ with $w(y^ \beta)\not\equiv 0$.
  Then
  \[
    [w,v]=\sum_{\alpha,j}f_{\alpha,j}(x) w\left(y^ \alpha\right)\dd{x_j}
    +\text{ vertical terms }
  \]
  Therefore $[w,v]\not\in V_{k-1}$ and consequently
  \[
  A  \ni [v,w]= D_{k-1}\setminus V_{k-1}
  \]
  Iterating this process yields the assertion.
\end{proof}
  
Now for every function $\phi$ on $X$,
$ad(\phi \Theta):v\mapsto [\phi \Theta,v]$ acts
naturally on $D_k$; on $V_k$ and $Q_k$ it acts by multiplication with $\phi k$:

\[
  [\phi \Theta,v]=\underbrace{(v\phi)}_{=0}\Theta+\phi
  \underbrace{[\Theta,v]}_{=kv}=\phi k v\text{ if $v\in V_k$}
\]

Note that $(v\phi)\Theta \in V_k$ for every $v\in D_k$.
Hence
\[
  [\phi \Theta,v]\equiv \phi kv
  \quad\text{ modulo $V_k$ $\forall v\in D_k$}
\]

As a consequence we obtain:
\begin{proposition}\label{o-module}
  Let $A$ be a vector subspace of $D$ which is invariant under
  $ad(\phi \Theta)$ for every holomorphic function $\phi$ on $X$.

  Then $A\cap V_k$ and $(A\cap D_k)/(A\cap V_k)$ are
  $\Olo_X$-sub modules of $V_k$ resp.~$Q_k$.
\end{proposition}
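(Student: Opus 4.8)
The plan is to use the identity recorded immediately before the statement: for $v\in D_k$ one has $[\phi\Theta,v]\equiv k\,\phi v$ modulo $V_k$, with honest equality $[\phi\Theta,v]=k\,\phi v$ when $v$ is vertical. In other words $ad(\phi\Theta)$ reproduces, up to the scalar factor $k$, multiplication by $\phi$ on $V_k$ and on $Q_k$. Since $A$ is assumed stable under $ad(\phi\Theta)$ for \emph{every} holomorphic function $\phi$ on $X$, its $k$-th graded piece, and the image of that piece in $Q_k$, will inherit stability under multiplication by holomorphic functions on $X$ --- which is exactly the asserted module structure. Throughout I take $k\neq 0$, the case relevant for the sequel; for $k=0$ the operators $ad(\phi\Theta)$ act as zero on $V_0$ and on $Q_0$, so the present argument produces nothing there.

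First I would handle $A\cap V_k$. Fix $v\in A\cap V_k$ and a holomorphic function $\phi$ on $X$. As $v$ is vertical and $\phi$ is constant along the $\pi$-fibres we have $v\phi=0$, so $[\phi\Theta,v]=\phi[\Theta,v]-(v\phi)\Theta=\phi[\Theta,v]=k\,\phi v$. Stability of $A$ under $ad(\phi\Theta)$ gives $k\,\phi v\in A$, hence $\phi v\in A$ since $A$ is a linear subspace and $k\neq 0$. Moreover $\phi v$ is again vertical, and $[\Theta,\phi v]=(\Theta\phi)v+\phi[\Theta,v]=k\,\phi v$ because $\Theta\phi=0$, so $\phi v\in V_k$. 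Thus $\phi v\in A\cap V_k$; as this set is visibly closed under addition, it is an $\Olo_X$-submodule of $V_k$.

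For the quotient, note that $(A\cap D_k)\cap V_k=A\cap V_k$, so the canonical map $(A\cap D_k)/(A\cap V_k)\to D_k/V_k=Q_k$ is injective and identifies $(A\cap D_k)/(A\cap V_k)$ with the image of $A\cap D_k$ under the projection $D_k\to Q_k$. Fix $v\in A\cap D_k$ and a holomorphic function $\phi$ on $X$. Since $\phi\Theta\in D_0$, compatibility of the bracket with the grading gives $[\phi\Theta,v]\in D_k$, while stability of $A$ gives $[\phi\Theta,v]\in A$; hence $[\phi\Theta,v]\in A\cap D_k$. As noted before the statement, $(v\phi)\Theta\in V_k$, so $[\phi\Theta,v]\equiv k\,\phi v$ modulo $V_k$; therefore the class of $k\,\phi v$ in $Q_k$ lies in the image of $A\cap D_k$, and dividing by $k$ so does the class of $\phi v$. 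Hence this image is stable under multiplication by holomorphic functions on $X$, i.e.\ $(A\cap D_k)/(A\cap V_k)$ is an $\Olo_X$-submodule of $Q_k$.

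The computations are entirely routine and I foresee no real obstacle. The only points needing a moment's attention are that multiplication by $\phi$ preserves the $\Theta$-weight --- this is where $\Theta\phi=0$ is used, ensuring $\phi v$ stays in $D_k$, resp.\ $V_k$ --- and that one may divide by the scalar $k$, which is precisely why the degenerate case $k=0$ is set aside.
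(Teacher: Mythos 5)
Your argument is correct and is essentially the one the paper intends: Proposition \ref{o-module} is stated there as an immediate consequence of the displayed identity $[\phi\Theta,v]=k\phi v$ on $V_k$ and $[\phi\Theta,v]\equiv k\phi v$ modulo $V_k$ on $D_k$, which is exactly what you use. Your additional checks (the correct sign $[\phi\Theta,v]=\phi[\Theta,v]-(v\phi)\Theta$, the fact that $\Theta\phi=0$ keeps $\phi v$ in the same weight space, and the division by $k\neq 0$) only make explicit what the paper leaves implicit.
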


\begin{lemma}\label{all-vk}
  Assume $rank(E)=1$, i.e., $E$ is a line bundle.
  Assume in addition
  that a group $G$ of holomorphic automorphisms acts on $E$ such that
  $\pi:E\to X$ is equivariant and such that the induced action on the
  base manifold $X$ is transitive.

  Let $A\subset D$ be a closed $G$-invariant
  vector subspace. Assume that $A$ is invariant under
  $ad(\phi \Theta)$ for every holomorphic function $\phi$ on $X$.
  Then (for any $k\in\N$):
  
  {\em If $A\cap V_k\ne\{0\}$, then $V_k\subset A$.}
\end{lemma}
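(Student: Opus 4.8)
The plan is to translate the statement into the language of sheaves of sections and then apply the density theorem Theorem~\ref{dense-sections}. Since $E$ has rank one, the coherent sheaf $\sV_k$ whose sections are the vertical vector fields of weight $k$ is invertible: in a local linear fiber coordinate $y$ it is freely generated by $y^{k+1}\partial/\partial y$, so $V_k=\Gamma(X,\sV_k)$ is the space of global holomorphic sections of a line bundle over the Stein manifold $X$. Set $M:=A\cap V_k$. By Proposition~\ref{o-module}, the assumption that $A$ is invariant under $ad(\phi\Theta)$ for all holomorphic $\phi$ forces $M$ to be an $\Olo_X$-submodule of $V_k$; moreover $M$ is closed and $G$-invariant because $A$ is. Thus the claim becomes: \emph{a nonzero, closed, $G$-invariant $\Olo_X$-submodule of $\Gamma(X,\sV_k)$ equals $\Gamma(X,\sV_k)$.}

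The main step I would carry out is to show that the elements of $M$ span $\sV_k$ at every point of $X$, i.e.\ have no common zero. Granting this, Theorem~\ref{dense-sections} gives that the $\Olo_X$-module generated by $M$ is dense in $\Gamma(X,\sV_k)=V_k$; but $M$ is itself an $\Olo_X$-module containing its own elements, hence $M$ is dense, and being closed, $M=V_k$, i.e.\ $V_k\subset A$. To prove the spanning, I would pick $s\in M\setminus\{0\}$ and $x_0\in X$ with $s(x_0)\ne 0$; in a linear fiber coordinate $z$ on $E_{x_0}\cong\C$ one has $s|_{E_{x_0}}=c\,z^{k+1}\partial/\partial z$ with $c\ne 0$. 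Let $x\in X$ be arbitrary. By transitivity choose $g\in G$ with $g\cdot x_0=x$. Since $\pi$ is $G$-equivariant, $g$ carries $E_{x_0}$ biholomorphically onto $E_x$, and because every biholomorphism of $\C$ is affine, this restriction has the form $z\mapsto w=az+b$ with $a\ne 0$. Now $g_*s\in A$ (as $A$ is $G$-invariant), and $g_*s$ is vertical (as $g$ preserves fibers); restricting to $E_x$ one computes $g_*\!\left(c\,z^{k+1}\partial/\partial z\right)=(c/a^{k})(w-b)^{k+1}\partial/\partial w$, whose part that is homogeneous of degree $k+1$ in the fiber coordinate $w$, that is, its weight-$k$ component, equals $(c/a^{k})\,w^{k+1}\partial/\partial w\ne 0$. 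The global weight-$k$ component $(g_*s)_{k}$ lies in $A$, since $A$ is closed and invariant under the $\C^*$-action generated by $\Theta$, so the weight-$k$ isotypic projection (obtained by averaging over the circle in $\C^*$, cf.\ the corollary to Theorem~\ref{pweyl}) maps $A$ into $A$. Being also vertical of weight $k$, $(g_*s)_{k}\in A\cap V_k=M$, and by the computation it is nonzero at $x$. As $x$ was arbitrary, $M$ has no common zero.

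The delicate point is that a $G$-automorphism of $E$ need not be fiberwise linear, so $g_*$ does \emph{not} preserve $V_k$: the pushforward of a weight-$k$ vertical field is in general a sum of vertical fields of weights $-1,0,\dots,k$. This is exactly what forces the step of projecting onto the weight-$k$ component, and the explicit computation with the affine map $z\mapsto az+b$ is what guarantees that this projection does not annihilate the leading term, so that the section we extract is still nonvanishing at the chosen point. Everything else reduces to a direct application of Proposition~\ref{o-module}, Theorem~\ref{dense-sections}, and the Peter--Weyl corollary to Theorem~\ref{pweyl}.
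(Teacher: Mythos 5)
Your proof follows the same route as the paper's: identify $A\cap V_k$ as a closed $\Olo_X$-submodule of the space of sections of a line bundle via Proposition~\ref{o-module}, show it spans the fiber at every point of $X$ using the transitivity of $G$, and conclude with Theorem~\ref{dense-sections}. The paper dismisses the spanning step as ``obvious''; your extra care --- pushing forward $s$ by $g$, projecting onto the weight-$k$ isotypic component via the $\C^*$-averaging implicit in Theorem~\ref{pweyl}, and checking through the affine form $z\mapsto az+b$ of $g|_{E_{x_0}}$ that the leading coefficient $(c/a^{k})$ survives --- correctly handles the case where $G$ preserves fibers without acting fiberwise-linearly, a point the paper's one-line justification silently assumes away.
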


\begin{proof}
 Since $rank(E)=1$, $V_k$ is the space of global sections
 of a line bundle.
 
 Recall that  $V_k\cap A$ is
 a $\mathcal \Olo_X$-module (Proposition~\ref{o-module}).
 Hence, using Theorem~\ref{dense-sections},
 it suffices to show that for every $p\in X$ there exists
 an element $s\in V\cap A_k$ with $s(p)\ne 0$.
 However, this is obvious, because $G$ acts transitively on $X$
 and $A\cap V_k\ne\{0\}$.
\end{proof}
  
\begin{lemma}\label{all-qk}
  Let $E\to X$ be a holomorphic line bundle over a Stein manifold $X$.
  Assume that a Lie group $G$ acts transitively on $X$ such that
  the action can be lifted to an action by vector bundles automorphisms
  on $E$.

  Let $k\in\N$, and let $A$ be a
  closed $G$-invariant Lie subalgebra of $D$ with $D_0\subset A$.
  
  Then:
  
  If $(A\cap D_k)/(A\cap V_k)\ne\{0\}$, then
    $D_k\subset V_k+A$.
\end{lemma}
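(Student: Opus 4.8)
The plan is to exploit the $\O_X$-module structure provided by Proposition~\ref{o-module} together with the transitivity of $G$, just as in the proof of Lemma~\ref{all-vk}, but now working with the quotient sheaf $\Sc_k$ (or $\Q_k$) rather than the vertical sheaf $\sV_k$. Since $A$ is invariant under $ad(\phi\Theta)$ for every holomorphic $\phi$ on $X$, Proposition~\ref{o-module} tells us that $M:=(A\cap D_k)/(A\cap V_k)$ is an $\O_X$-submodule of $Q_k$, where $Q_k=D_k/V_k$ is the space of global sections of the coherent sheaf $\Sc_k$. First I would observe that because $G$ acts by vector bundle automorphisms and transitively on $X$, the submodule $M$ and the ambient module $Q_k$ are both $G$-invariant, and the $G$-action covers a transitive action downstairs. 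By hypothesis $M\ne\{0\}$, so there is at least one point $p\in X$ where some element of $M$ does not vanish; transporting by $G$, the fiber of the sheaf generated by $M$ is nonzero at \emph{every} point of $X$.

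Next I would localize the statement: the claim $D_k\subset V_k+A$ is equivalent to $M=Q_k$, i.e.\ to the assertion that the $\O_X$-submodule $M\subset\Gamma(X,\Sc_k)$ is everything. Here $\Sc_k$ is coherent on the Stein manifold $X$, so by Cartan's Theorem A it is generated by its global sections, and by Theorem~\ref{dense-sections} the $\O_X$-module generated by any family of global sections spanning $\Sc_k$ everywhere is dense in $\Gamma(X,\Sc_k)$. So it suffices to show that the sections in $M$ span the fiber $(\Sc_k)_p$ at every point $p\in X$, and then invoke closedness of $A$ to upgrade density to equality. The transitivity of $G$ reduces this fiberwise spanning statement to a single point: if the $G$-translates of a nonzero element of $M$ already span $(\Sc_k)_p$ at one $p$, we are done; in general we get that the span of $M$ in $(\Sc_k)_p$ is a $G_p$-invariant subspace, and I would need to argue it is the whole fiber.

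The main obstacle, and the point where more care is needed than in Lemma~\ref{all-vk}, is precisely this last spanning statement: for a line bundle, $V_k$ is the sections of \emph{one} line bundle and nonvanishing at a point immediately gives a generator of the fiber, but $Q_k$ has rank $n=\dim X$ (it is roughly $\Omega^1_X$ twisted by $(E^*)^{\otimes k}$ via the isomorphism $v\mapsto$ its leading symbol), so a single nonzero section need not span the fiber. To handle this I would use that $D_0\subset A$: the weight-zero vector fields include the horizontal lifts of \emph{all} holomorphic vector fields on $X$ (via a holomorphic connection, which exists since $X$ is Stein), and bracketing such a lift $\tilde w$ with a degree-$k$ element $v$ of $A$ shifts its leading symbol by the Lie action of $w$ on the base — concretely, $ad(\tilde w)$ acts on $Q_k\cong\Gamma(X,\Omega^1_X\otimes(E^*)^{\otimes k})$ essentially as the Lie derivative along $w$ plus a zeroth-order term, and since vector fields on the Stein manifold $X$ span every tangent space, these Lie derivatives move a nonzero covector at $p$ to span all of $T_p^*X\otimes(E^*_p)^{\otimes k}=(\Sc_k)_p$. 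Combined with the $\O_X$-module property and Theorem~\ref{dense-sections}, this yields $M=Q_k$, hence $D_k\subset V_k+A$. I would phrase the zeroth-order correction term carefully, noting it can be absorbed because we already have the $\O_X$-module structure, so only the symbol-level (first-order) behavior matters.
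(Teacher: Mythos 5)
Your proposal is correct and follows essentially the same route as the paper's proof: identify $(A\cap D_k)/(A\cap V_k)$ as an $\O_X$-submodule of the sections of $TX\otimes(E^*)^{\otimes k}$ via Proposition~\ref{o-module}, reduce to fiberwise spanning at a single point by $G$-transitivity and Theorem~\ref{dense-sections}, and achieve the spanning by bracketing with weight-zero horizontal lifts from $D_0\subset A$, whose action on the leading symbol is the Lie derivative (the paper packages this last step as Lemma~\ref{ex-stein-vf}, and handles the zeroth-order connection term by subtracting a multiple of $\Theta$ rather than absorbing it into the module structure as you do). The only slip is notational: the quotient $Q_k$ is $TX\otimes(E^*)^{\otimes k}$, not $\Omega^1_X\otimes(E^*)^{\otimes k}$, so one moves a nonzero tangent vector, not a covector.
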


\begin{proof}
  The elements in $D_k/V_k$ may be regarded as global sections in
  the corresponding vector bundle. This vector bundle is
  isomorphic to $TX \tensor(E^*)^k$.

  Let $v\in (A\cap D_k)\setminus V_k$ and let $p\in X$ such that
  $v(p)$ is a non-zero element in $T_p X\tensor(E_p^ *)^k$.
  
  In local coordinates on a neighorhood $\Omega$ of $p$ ($x_1,\ldots,x_n$ on the base, $y$
  on the fiber):

  \[
  v=v_0y^k,\quad v_0\in\Gamma(\Omega,TX)
  \]
  Let $u\in T_pX$.
  Using lemma~\ref{ex-stein-vf} we may
  choose a vector field $w_0$ on $X$
  with $[w_0,v_0](p)=u$.
  Using a holomorphic connection we may lift $w_0$ to an element
  $\tilde w\in D_0$ with
  \[
  \tilde w=w_0+h(x)y\dd y
  \]
  in local coordinates.
  We choose a global holomorphic function $g\in\Olo(X)$ with
  $g(p)=h(p)$ and define $f(x)=h(x)-g(x)$ and
  \[
  w\stackrel{def}{=}\tilde w-g(x)\Theta=w_0+
  \underbrace{(h(x)-g(x))}_{f(x)}y\dd{y}
  \]
  Note that by construction $f(p)=0$ and $w\in D_0$.
  Now
  \[
    [ w,v]=[w_0+f(x)y\dd y,v_0y^ k]=[w_0,v_0]y^k
    +kf(x)y^kv_0-y^{k+1}(v_0(f))\dd y
  \]
  Hence
  \[
    [ w,v](p) =\underbrace{[w_0,v_0]}_{=u}y^k+
   k \underbrace{f(p)}_{=0}y^kv_0-\underbrace{y^{k+1}(v_0(f))\dd y}_{\in V_k}
  \]  
  Thus $A\cap D_k$ contains an element $r$ with $r(p)\equiv u\ \text{mod}\ V_k$.

  Recall that $u$ was arbitrary. 
  Recall further that $A$ is $G$-invariant and that $G$ acts transitively
  on $X$. It follows that the elements in $(A\cap D_k)/(A\cap V_k)$
  generate the vector bundle $\Omega_X\tensor(E^ *)^k$ everywhere.

  Due to lemma~\ref{o-module},
  $(A\cap D_k)/(A\cap V_k)$ is a $\Olo_X$-sub module of $D_k/V_k$.
  From Theorem~\ref{dense-sections} it follows that $D_k\subset A+V_k$.
  \end{proof}
  
\begin{lemma}\label{go-up}
  If $V_1\subset A$ and
       $(A\cap D_{k})/(A\cap V_{k})\ne\{0\}$,
      then $(A\cap D_{k+1})/(A\cap V_{k+1})\ne\{0\}$.
\end{lemma}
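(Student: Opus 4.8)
The statement to be proved is Lemma~\ref{go-up}: under the hypotheses that $V_1 \subset A$ and $(A\cap D_k)/(A\cap V_k)\neq\{0\}$, one concludes $(A\cap D_{k+1})/(A\cap V_{k+1})\neq\{0\}$. In other words, having a non-vertical vector field of weight $k$ inside $A$ (modulo verticals) forces the existence of a non-vertical vector field of weight $k+1$ inside $A$. The natural mechanism is multiplication by a fiberwise-linear function: if $v \in A\cap D_k$ is not vertical and $\lambda$ is a fiberwise linear function on $E$ (a local section of $E^*$), then $\lambda v$ has weight $k+1$ and is still non-vertical wherever $\lambda$ and the horizontal part of $v$ are both non-zero. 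The obstruction is that $\lambda v$ need not lie in $A$; we must produce it as a Lie bracket of elements of $A$.

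**Key steps.** First I would fix $v \in (A\cap D_k)\setminus V_k$ and a point $p$ where the image of $v$ in $Q_k = D_k/V_k$ is non-zero; in local coordinates write $v = v_0 y^k + (\text{vertical})$ with $v_0 \in \Gamma(\Omega, TX)$ and $v_0(p)\neq 0$. Second, I would search for a complete (or at least admissible) vertical vector field $w$ of weight $1$, i.e. $w \in V_1 \subset A$, such that the bracket $[w,v]$ produces, modulo lower-order/vertical corrections, exactly the wanted weight-$(k+1)$ non-vertical field. Here one must be careful about which way the weight goes: a vertical field of weight $1$ has the shape $w = \sum_j c_j(x,y)\,\partial/\partial y_j$ with the $c_j$ affine-linear in $y$, say $c_j(x,y) = \sum_i a_{ij}(x)y_i$; bracketing such a $w$ against $v = v_0 y^k\partial/\partial x_k + \dots$ gives $[w,v] = v_0(\text{coefficients})\cdot(\text{stuff of weight } k)$ from the $\partial/\partial x$-part, which does \emph{not} raise the weight. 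So the honest route is the reverse: we should bracket $v$ with a \emph{weight-$0$} or weight-$(-1)$ element. Let me reconsider: to go from weight $k$ up to weight $k+1$, we want to bracket with something that, modulo verticals, acts as multiplication by a fiberwise-linear function — and $[\phi\Theta, v] \equiv \phi k v \bmod V_k$ only multiplies by a \emph{base} function $\phi$, not raising the weight. The correct device is: if $\mu$ is a fiberwise-linear function on $E$ (local section of $E^*$) then $\mu\Theta$ has weight $0$, and $[\mu\Theta, v] = (v\mu)\Theta + \mu[\Theta,v] = (v\mu)\Theta + k\mu v$. The term $(v\mu)\Theta$ is vertical; the term $k\mu v$ has weight $k+1$ and is non-vertical wherever $\mu(p)\neq 0$ and $v_0(p)\neq 0$. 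So the real content is: $\mu\Theta$ should lie in $A$, or at least $k\mu v$ should be recoverable inside $A$ from data we control.

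**Main obstacle and resolution.** The hard part is therefore showing $\mu\Theta \in A$ for a suitable fiberwise-linear $\mu$, or else bypassing it. Since $A$ is only assumed to be a Lie subalgebra closed under the ad-$\Theta$ grading with $V_1 \subset A$ (and, from the ambient context of Section~8, presumably closed under $ad(w)$ for $w \in D_{-1}$ and $ad(\phi\Theta)$), one cannot expect $\mu\Theta \in A$ directly. Instead I expect the intended argument uses $V_1 \subset A$ as follows: $V_1$ consists of fields $\sum a_{ij}(x)y_i\,\partial/\partial y_j$, i.e. $\mathrm{End}(E)$-valued, and these include the Euler field $\Theta$ itself locally; more usefully, bracketing a weight-$1$ vertical field $u \in V_1$ with $v \in D_k\setminus V_k$ raises the weight of the \emph{vertical} part to $k+1$ but one checks via Lemma~\ref{lem-x} / Lemma~\ref{lem-x-0} that a non-vertical weight-$(k+1)$ component is forced by a non-vanishing expression of the form $(\text{derivative of coefficients of }v)$. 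Concretely, I would invoke Lemma~\ref{lem-x} to choose $Y_0, V_0, \phi_0$ on $X$ with $V_0\phi_0 \equiv 0$, $V_0 Y_0\phi_0 \not\equiv 0$, lift appropriately, and apply the identity of Lemma~\ref{lem-x-0}, $[\phi\Theta,[Y,H]] - [\phi Y, H] = (HY\phi)\Theta$, exactly as in the proof of Proposition~\ref{l-vertical}, to land a non-zero element $(HY\phi)\Theta$ of weight $k+1$ in $A$; since $(HY\phi)$ involves a fiberwise-linear factor it is genuinely of weight $k+1$ and $\Theta$-multiple, hence non-vertical, giving $(A\cap D_{k+1})\setminus V_{k+1}\neq\{0\}$, i.e. $(A\cap D_{k+1})/(A\cap V_{k+1})\neq\{0\}$. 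The step I anticipate being most delicate is bookkeeping the weights: verifying that the particular combination produced by Lemma~\ref{lem-x-0}, with $H \in A\cap D_k$ (the non-vertical weight-$k$ element) playing the role of "$V$" and with a weight-$1$ fiberwise-linear twist built from $V_1 \subset A$, indeed has total weight $k+1$ rather than $k$, and that the output $\zeta W = (VY\phi)\Theta$ is non-vertical and non-zero at $p$. Once the weight count checks out, closedness of $A$ under brackets and the hypothesis $V_1\subset A$ deliver the conclusion without any further analytic input.
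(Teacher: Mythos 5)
Your proposal contains the right mechanism in its middle portion, but you then talk yourself out of it because of an off-by-one error in the grading, and the argument you substitute does not prove the statement. The weight of a vertical field $\sum_j c_j(x,y)\,\partial/\partial y_j$ with $c_j$ homogeneous of degree $d$ in $y$ is $d-1$, since $[\Theta,\cdot]$ acts on it as multiplication by $d-1$. Hence $V_1$ is \emph{not} the space of linear-in-$y$ ($\mathrm{End}(E)$-valued) vertical fields as you assert; that is $V_0$. In the line-bundle case $V_1$ consists exactly of the fields $g(x)y^2\,\partial/\partial y=\phi\Theta$ with $\phi=g(x)y$ a fiberwise-linear function, i.e.\ a section of $E^*$. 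Consequently the element $\mu\Theta$ that you correctly identify as the key ingredient --- note it has weight $1$, not weight $0$ as you write --- lies in $V_1$ and is therefore in $A$ \emph{by hypothesis}; that is precisely what the assumption $V_1\subset A$ is for. The paper's proof is exactly this one bracket: for $0\ne\phi\Theta\in V_1\subset A$ and $w\in(A\cap D_k)\setminus V_k$ one has $[\phi\Theta,w]=k\phi w-(w\phi)\Theta\in A\cap D_{k+1}$, whose class modulo verticals is $k\phi\cdot\bar w\ne 0$ (a product of two not-identically-vanishing holomorphic sections). Nothing more is needed.

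The fallback you propose instead --- running Lemma~\ref{lem-x-0} to land an element of the form $(HY\phi)\Theta$ in $A$ --- cannot prove this lemma: any multiple of the Euler field is tangent to the fibers, so $(HY\phi)\Theta$ is \emph{vertical}, contrary to your assertion that it is ``non-vertical''. At best that argument shows $A\cap V_{k+1}\ne\{0\}$, which is the conclusion of Proposition~\ref{l-vertical}, not of Lemma~\ref{go-up}, whose content is non-triviality of $A\cap D_{k+1}$ \emph{modulo} $V_{k+1}$. So as written the proposal has a genuine gap; the repair is to delete the final paragraph and keep the $[\mu\Theta,w]$ computation together with the observation that $\mu\Theta\in V_1\subset A$.
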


\begin{proof}
  Let $v\in V_1\setminus \{0\}$.
  Then $v=\phi\Theta$ for some non-zero holomorphic section $\phi$
  of $E^*$, where $\Theta$ denotes the Euler vector field.

  Let $w\in (A\cap D_k)\setminus V_k$.
  In local coordinates on some open neighborhood $\Omega$
  of $p$ in $X$:
  \[
  v=g(x)y^2\dd{y}\quad\text{ $\phi=g(x)y$}
  \]
  and
  \[
  w=w_0y^k+f(x)y^{k+1}\dd{y} \quad\text{ with $w_0\in\Gamma(\Omega,T\Omega)$}
  \]
  Then
  \[
    [v,w]=[g(x)y^2\dd{y},w_0y^k+f(x)y^{k+1}\dd{y} ]
    =ky^{k+1}g(x)w_0+\text{ vertical terms}
  \]
%
\end{proof}


\section{Main result for rank at least two}
  
\begin{theorem} \label{mainrank2}
  Let  $X$ be a Stein manifold with the density property, $\pi:E\to X$ a holomorphic
  vector bundle of rank greater than one.

  Assume that the total space $E$ admits an automorphism which is not
  compatible with $\pi$
  (in these sense of Definition~\ref{def-compat}).

  Then $E$ satisfies the density property.
\end{theorem}

\begin{proof}
 Let $L$ be the Lie algebra of all holomorphic vector fields on the manifold
 $E$ and let $P$ denote the subalgebra of those vector fields
 (whose flows are) preserving the projection map $\pi$.

 Vertical vector fields are approximable due to lemma \ref{vertical}.
 Complete vector fields on $X$ may be lifted to complete vector fields
 on $E$ which are compatible with the projection, by using a holomorphic
 connection (Proposition \ref{horiz}).
  
 Combined, we obtain that every vector field in $P$ is approximable.

 Let $H$ denote the Lie subalgebra of $L$ containing all approximable
 vector fields.

 Then
 \[
 P\subsetneq H\subset L
 \]

 Now Theorem \ref{no-inter} implies that $H=L$.
\end{proof}

\section{The line bundle case} \label{LineBundle}

\begin{theorem}\label{mainrank1}
  Let $\pi:L\to X$ be a holomorphic line bundle over a Stein manifold $X$
  with the
  density property.
  Assume that there exists a holomorphic automorphism of $L$ which
  is not $\pi$-compatible
    (in these sense of Definition~\ref{def-compat}).

  Then the total space $L$ is a Stein manifold with the density property,
  too.
\end{theorem}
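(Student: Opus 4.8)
The plan is to prove that the closed Lie algebra $H$ generated by the complete holomorphic vector fields on the total space $L$ equals the full Lie algebra $D$ of holomorphic vector fields on $L$; since $L$ is the total space of a holomorphic vector bundle over a Stein manifold it is itself Stein, so this gives the density property. As in the rank $\ge 2$ case I would argue weight by weight with respect to the Euler vector field $\Theta$, using $D=\bigoplus_{k\ge -1}D_k$ with vertical subspaces $V_k\subset D_k$ and quotients $Q_k=D_k/V_k$, and reducing by the Peter--Weyl corollary to proving $D_k\subset H$ for every $k$. If $\dim X=1$ then $X$ is a non-compact Riemann surface, $L$ is holomorphically trivial, and $L\cong X\times\C$ has the density property by \cite{Varolin2001} (see the remark following Proposition~\ref{l-vertical}); so I may assume $\dim X\ge 2$.

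First I would secure the low weights and the ambient structure. The spaces $D_{-1}=V_{-1}=\Gamma(X,L)$ of fibrewise translations and $V_0$ of fibrewise scalings consist of complete vector fields, while the complement of $V_0$ in $D_0$ is spanned by horizontal lifts of vector fields on $X$, which are approximable by Lemma~\ref{horizontal}; hence $D_{-1}\oplus D_0\subset H$. In particular $\phi\Theta\in V_0\subset H$ for every $\phi\in\Olo(X)$, so $H$ is $ad(\phi\Theta)$-invariant and Proposition~\ref{o-module} makes $H\cap V_k$ and $(H\cap D_k)/(H\cap V_k)$ into $\Olo_X$-submodules of $V_k$ and $Q_k$. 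Since conjugation by an automorphism of $L$ preserves completeness, $H$ is $\Aut(L)$-invariant; and the time-flows of the horizontal lifts of complete vector fields on $X$ (such lifts are complete by Proposition~\ref{horiz} and act by bundle automorphisms) generate a subgroup $G\subset\Aut(L)$ whose induced action on $X$ is transitive, because the density property makes $X$ holomorphically flexible. Thus $H$ and $G$ meet the hypotheses of Lemmas~\ref{all-vk} and~\ref{all-qk}.

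Next I would bring in the $\pi$-incompatibility. By Proposition~\ref{from-auto-to-vf} there is a $\pi$-incompatible complete vector field in $H$; its $\Theta$-homogeneous components lie in $H$ (closed and $ad(\Theta)$-invariant), and at least one, of some weight $m\ge 1$, is still $\pi$-incompatible, so $(H\cap D_m)\setminus V_m\ne\emptyset$. Lemma~\ref{reduce-k} then gives $(H\cap D_1)\setminus V_1\ne\emptyset$, and Lemma~\ref{all-qk} upgrades this to $D_1\subset V_1+H$. Now Proposition~\ref{l-vertical} applied with $A=H$ (which contains every field of weight $\le 0$ and satisfies $(H\cap D_1)+V_1=D_1$) produces a nonzero element of $H\cap V_1$, and Lemma~\ref{all-vk} then yields $V_1\subset H$, hence $D_1\subset H$. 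From here one iterates: using $V_1\subset H$, Lemma~\ref{go-up} turns a nonzero class in $(H\cap D_k)/(H\cap V_k)$ into one in degree $k+1$, after which Lemma~\ref{all-qk}, Proposition~\ref{l-vertical} and Lemma~\ref{all-vk} give successively $D_{k+1}\subset V_{k+1}+H$, $H\cap V_{k+1}\ne\{0\}$ and $V_{k+1}\subset H$, so $D_{k+1}\subset H$; the induction never stalls since $Q_k\cong\Gamma(X,TX\otimes(L^*)^k)\ne 0$ on the Stein manifold $X$ of dimension $\ge 2$. Thus $\bigoplus_k D_k\subset H$, and as $H$ is closed and this sum is dense, $H=D$.

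The step I expect to be the main obstacle is the production of nonzero vertical fields in $H$ of positive weight, i.e.\ Proposition~\ref{l-vertical}. In the rank $\ge 2$ case the density property of the fibre $\C^r$ supplied all vertical fields for free (Lemma~\ref{vertical}); the fibre $\C$ of a line bundle has no such property, so the weight-$k$ vertical fields with $k\ge 1$ must be built out of the $\pi$-incompatible field via the bracket identity of Lemma~\ref{lem-x-0}, which is exactly what forces the use of a locally nilpotent derivation on the base (Lemma~\ref{lem-x}). Beyond this, the work is bookkeeping: checking that $H$ really satisfies the hypotheses of Lemmas~\ref{all-vk},~\ref{all-qk} and Proposition~\ref{l-vertical} --- its closedness, $\Olo_X$-module structure, $G$-invariance, and containment of all weight $\le 0$ fields --- and that the homogeneous-decomposition and horizontal-lift steps behave as claimed.
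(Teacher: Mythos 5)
Your proposal follows the paper's proof essentially step for step: pass to the Euler-weight decomposition, use Proposition~\ref{from-auto-to-vf} and Peter--Weyl to find a positive-weight non-vertical element of the closed algebra of approximable fields, descend to weight $1$ via Lemma~\ref{reduce-k}, then cycle through Lemma~\ref{all-qk}, Proposition~\ref{l-vertical} and Lemma~\ref{all-vk}, climb with Lemma~\ref{go-up}, and conclude by density of $\bigoplus_k D_k$. The argument is correct, and your explicit verifications (the choice of the transitively acting group $G$ from flows of horizontal lifts, and the $ad(\phi\Theta)$-invariance giving the $\Olo_X$-module structure) are details the paper leaves implicit rather than a different route.
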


\begin{proof}
  Since $X$ satisfies the density property, $\dim(X)\ge 2$.

  Let $A$ denote the closure of the vector subspace generated
  by the complete vector fields on $L$
  in the Fr\'echet vector space $D$ of all vector fields on $L$.

  Note that $A$ is a Lie subalgebra of the Lie algebra $D$ due to
   Corollary~\ref{closure-is-LA}.
  
  Recall that $D_{-1}\oplus D_0\subset A$, where $D_k = \{ v \in D : [\Theta, v] = kv \}$ for $k \in \ZZ$ is the set of homogeneous vector fields of degree $k$ with respect to the grading induced by the Euler vector field $\Theta$.

  We assumed that there is a holomorphic automorphism of $L$ which
  is not $\pi$-compatible. Due to Proposition~\ref{from-auto-to-vf}
  it follows that there is a complete vector field $v$ which is not
  $\pi$-compatible.
  The vector subspace of all $\pi$-compatible vector fields
  equals
  \[
  C\stackrel{def}=D_{0}\oplus D_{-1}\oplus\bigoplus_{k>0}V_k
  \]
  The Euler vector field $\Theta$ is an element of $D_0$ and contained
  in the Lie algebra $A$.
  Applying Theorem~\ref{pweyl} to the Fr\'echet space $A/(A\cap C)$ for
  the $\C^ *$-action corresponding to $\Theta$, we conclude
  that
  \[
  \exists k>0: (A\cap D_k)/(A\cap C\cap D_k)\ne\{0\}
  \ \iff\ \exists k>0: (A\cap D_k)\setminus V_k\ne  \emptyset
  \]
  With lemma~\ref{reduce-k} this in turn implies
  \[
  (A\cap D_1)\setminus V_1\ne \emptyset
  \]
  Next $D_1\subset V_1+A$ due to lemma~\ref{all-qk}.
  By Proposition~\ref{l-vertical}, $A\cap V_1\ne\{0\}$.
  By lemma~\ref{all-vk}, $V_1\subset A$. In combination
  with $D_1\subset V_1+A$, this yields $D_1\subset A$.

  We have established that $V_1\subset A$ and
  $D_1\subset A$. Repeated application of lemma~\ref{go-up}
  implies that
  \[
  (A\cap D_k)\setminus V_k\ne \emptyset\ \forall k\in\N
  \]
  This in turn
  (using once again lemma~\ref{all-qk}, Proposition~\ref{l-vertical}
  and lemma~\ref{all-vk}) yields $D_k\subset A\ \forall k$.
  Since $A$ is closed and $\oplus_k D_k$ is dense in $D$
  (Theorem~\ref{pweyl}), we obtain $A=D$.
  \end{proof}

\section{Flexibility} \label{flexibility}

This Section aims to prove that the total space of an algebraic vector bundle over a flexible affine variety is flexible, too.

\begin{theorem}\label{bass-quillen}
  Let $E\to B$ be an algebraic vector bundle 
  over a smooth affine
  $k$-variety $B$.

  Let $v$ be a locally nilpotent derivation of $k[B]$.
  
  Then the vector bundle $E$ is invariant under the associated flow,
  i.e.,
  $\mu_t^*E\simeq E$ 
  where $\mu_t$ denotes the automorphism of $B$ corresponding to the
  $k$-algebra automorphism
  \[
  f\mapsto \sum_{n=0}^\infty \frac{t^nv^n(f)}{n!}
  \]
\end{theorem}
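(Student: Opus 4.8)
The plan is to reduce the statement to the Bass--Quillen philosophy: an algebraic vector bundle over $\A^1 \times B$ that is trivial on $\{0\}\times B$ is in fact pulled back from $B$, or more precisely, that bundles on $B$ do not change along a $\G_a$-action. First I would set up the right total space. Let $v$ be the given LND of $k[B]$, and consider the corresponding algebraic $\G_a$-action $\mu \colon \A^1 \times B \to B$, $(t,b)\mapsto \mu_t(b)$. Form the pullback bundle $\mu^* E$ on $\A^1 \times B$. The key claim is that $\mu^* E \simeq \mathrm{pr}_B^* E$ as bundles on $\A^1 \times B$, where $\mathrm{pr}_B \colon \A^1 \times B \to B$ is the second projection. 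Granting this, restriction to the slice $\{t\}\times B$ gives $\mu_t^* E \simeq E$, which is exactly the assertion.

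To prove the claim $\mu^* E \simeq \mathrm{pr}_B^* E$, observe first that the two bundles agree on the slice $\{0\}\times B$: since $\mu_0 = \mathrm{id}_B$, we have $(\mu^* E)|_{\{0\}\times B} \simeq E \simeq (\mathrm{pr}_B^* E)|_{\{0\}\times B}$. Now I would invoke the homotopy invariance of algebraic vector bundles for the $\A^1$-direction. The cleanest route is the theorem of Lindel (the algebraic, non-polynomial version of Bass--Quillen / Quillen--Suslin over a smooth affine base): if $B$ is a smooth affine variety over a field, then every algebraic vector bundle on $\A^1 \times B$ is extended from $B$, i.e.\ isomorphic to $\mathrm{pr}_B^* F$ for some bundle $F$ on $B$. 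Applying this to $\mu^* E$ gives $\mu^* E \simeq \mathrm{pr}_B^* F$ for some $F$; restricting to $\{0\}\times B$ identifies $F \simeq E$, and the claim follows. An alternative, if one prefers to stay self-contained, is to argue via $F(\A^1 \times B) = F(B)$ on the level of the Picard group when $\mathrm{rank}(E)=1$ and then to bootstrap using the splitting principle / the fact that $K_0(\A^1\times B) = K_0(B)$ together with a cancellation theorem; but Lindel's theorem handles all ranks uniformly and is the natural citation.

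The main obstacle is purely one of citation and hypotheses rather than of genuine difficulty: one needs the base field to be such that Lindel's extension theorem applies (a perfect field, or more generally a field over which the Bass--Quillen conjecture is known for smooth affine algebras — which includes all fields of characteristic zero, and in particular $\C$, the only case we need), and one needs $B$ smooth, which is assumed. A secondary point to check carefully is that $\mu$ is genuinely an algebraic morphism $\A^1 \times B \to B$ — this is standard: the exponential formula $f \mapsto \sum_{k\ge 0} t^k v^k(f)/k!$ defines a $k[t]$-algebra homomorphism $k[B] \to k[t]\otimes_k k[B]$ precisely because $v$ is \emph{locally} nilpotent (the sum is finite on each $f$), so no convergence issue arises and $\mu_t$ is a well-defined algebraic automorphism for every $t$. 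Once these formalities are in place the proof is immediate.
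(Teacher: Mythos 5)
Your argument is correct and is essentially the paper's own proof: both invoke Lindel's theorem (Bass--Quillen for smooth affine bases) to conclude that $\mu^*E$ on $\A^1\times B$ is extended from $B$, identify the extending bundle with $E$ by restricting to $\{0\}\times B$, and then restrict to $\{t\}\times B$ to get $\mu_t^*E\simeq E$. Your added remark that local nilpotence of $v$ makes $\mu$ a genuine algebraic morphism is a sensible detail the paper leaves implicit.
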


\begin{proof}
  In \cite{MR641133} (see also \cite{MR2235330})
  Lindel proved (partially) the Bass-Quillen conjecture.
  Lindel's result implies that, given a smooth affine variety $V$ and $d\in\N$,
  every
  vector bundle on $\A^d\times V$ is given as a pull-back of a vector bundle
  on $V$ via the projection map $\A^d\times V\to V$.

  Let $\mu:G_a\times  B\to B$ be the map defining the group action
  induced by $v$, i.e., $\mu(t,x)=\mu_t(x)$.
  and let $p:G_a\times  B\to B$ be the projection on the second factor.
  By Lindel's result there is a vector bundle $E_0$ on $B$ such that
  $\mu^*E  \simeq p^ *E_0$. 
  We observe that $\mu=p$ on $\{0\}\times B$.
  Hence $E_0\simeq E$.
  It follows that
  \[
  \mu_t^*E\simeq\mu^*E|_{\{t\}\times B}\simeq
  p^*E_0|_{\{t\}\times B}\simeq 
  p^*E|_{\{t\}\times B}\simeq E.
  \]
  \end{proof}

\begin{lemma}\label{split}
  Let
  \begin{equation}\label{uni-seq}
    1 \to N \to G \to U\to 1
  \end{equation}
  be a short exact sequence of linear algebraic groups
  with $U\simeq G_a$.

  Then the sequence splits.
\end{lemma}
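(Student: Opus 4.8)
The plan is to reduce the statement to the (well-known) vanishing of the relevant extension group for unipotent groups, or alternatively to give a direct construction of a section. I would proceed as follows. First, since $U\simeq G_a$ is unipotent and the sequence \eqref{uni-seq} is exact, the kernel $N$ is a normal subgroup of the linear algebraic group $G$, and $G$ itself is a linear algebraic group. I would argue by induction on $\dim N$. The key reduction is to pass to a minimal normal subgroup: if $N\neq\{1\}$, choose a nontrivial connected normal subgroup $N_0\trianglelefteq G$ contained in $N$ that is either a vector group $G_a^m$ (if $N$ is unipotent, or more generally if its unipotent radical is nontrivial) or a torus/semisimple piece; since $U$ is unipotent, the image of $N$ in any quotient controls the structure. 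Quotienting by $N_0$ gives a shorter exact sequence $1\to N/N_0\to G/N_0\to U\to 1$, which splits by induction; lifting the splitting back to $G$ reduces the problem to the case where $N=N_0$ is a minimal normal subgroup.

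In the base case I would treat the possibilities for $N_0$ separately. If $N_0$ is central and isomorphic to $G_a^m$, then the isomorphism classes of extensions of $G_a$ by $G_a^m$ are classified by the Hochschild cohomology group $H^2(G_a, G_a^m)$ as algebraic groups; over a field of characteristic zero this group vanishes (the relevant $2$-cocycles, being regular functions on $G_a\times G_a$ that are additive in each variable up to a coboundary, are all coboundaries), so the sequence splits. If $N_0$ is a torus or is semisimple, then $N_0$ is its own derived-or-reductive part and one uses that $G_a$ has no nontrivial homomorphisms to, and no nontrivial extensions by, a group of multiplicative type in characteristic zero — again a cohomology vanishing — to split off $N_0$ or to see it is central and reduce to the previous case. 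The noncentral case is handled by first splitting the conjugation action: the image of $G\to \Aut(N_0)$ must be trivial on the unipotent part because $U=G_a$ maps to $\Aut(N_0)$ and, $N_0$ being minimal, this action is by algebraic group automorphisms; a connected unipotent group acting on a torus acts trivially, and a connected unipotent group acting on a minimal normal vector group acts through a unipotent subgroup of $\GL_m$, which can be absorbed.

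The main obstacle I expect is bookkeeping the characteristic-zero cohomology vanishing cleanly — specifically, verifying that $\Ext^1_{\text{alg.gp.}}(G_a, V)=0$ and $H^2_{\text{alg.gp.}}(G_a,V)=0$ for $V$ a vector group with a unipotent $G_a$-action, and that $\Hom(G_a,T)=0=\Ext^1(G_a,T)$ for $T$ of multiplicative type. These are standard (see e.g.\ the structure theory of linear algebraic groups), so rather than reprove them I would cite the appropriate reference and assemble the induction. An alternative, perhaps cleaner route that avoids cohomology altogether: lift a generator, i.e.\ choose any morphism of varieties $s_0:U\to G$ with $s_0(0)=e$ splitting the underlying variety map (possible since $U\cong \A^1$ and $G\to U$ is a locally trivial fibration in the Zariski topology, being a torsor under $N$), then correct $s_0$ to a group homomorphism by averaging the resulting $2$-cocycle along the one-parameter structure of $G_a$ — concretely, replace $s_0(t)$ by an integral/formal-exponential expression in $t$ that forces multiplicativity, which terminates because everything in sight is unipotent modulo a reductive part that $G_a$ centralizes. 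I would present whichever of these the surrounding sections make most economical.
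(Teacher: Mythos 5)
Your plan is workable in characteristic zero and, once the cohomological inputs are assembled, it does prove the lemma; but it takes a genuinely different and considerably heavier route than the paper. The paper's proof is a single application of the Levi--Mostow decomposition: choosing maximal connected reductive subgroups $S_1\subset N$ and $S\subset G$ with $S_1\subset S$, one observes that $S/S_1$ would be a reductive group mapping nontrivially to $G_a$, hence is trivial; therefore the one extra dimension of $G$ over $N$ lies entirely in the unipotent radical $V$ of $G$, and exponentiating a line $L\subset Lie(V)$ with $L\not\subset Lie(N)$ gives a one-parameter unipotent subgroup mapping isomorphically onto $U\simeq G_a$. No d\'evissage, no classification of minimal normal subgroups, no $H^2$. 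Your induction through minimal normal subgroups buys generality (it isolates exactly which $\mathrm{Hom}$, $\mathrm{Ext}^1$ and $H^2$ groups must vanish, and would adapt to other unipotent quotients $U$), but at the price of a case analysis that your sketch does not fully close.

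Three points would need attention before your argument is complete. First, the induction is on $\dim N$, but a nontrivial \emph{finite} $N$ has dimension zero and admits no nontrivial connected normal subgroup; you need the separate (easy) observation that $G_a$ admits no nontrivial isogenies onto it in characteristic zero, so that $G^0\to U$ is already an isomorphism in that case. Second, in the noncentral vector-group case the phrase ``which can be absorbed'' is exactly where something must be proved, namely $H^2(G_a,V)=0$ for a nontrivial unipotent $G_a$-module $V$ (true, e.g.\ because in characteristic zero this reduces to Lie algebra cohomology of a one-dimensional Lie algebra, where $\Lambda^2$ vanishes); and the semisimple case is better handled by the centralizer argument via finiteness of the outer automorphism group than by a multiplicative-type $\mathrm{Ext}$ vanishing. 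Third, the alternative route at the end (lift a set-theoretic section and ``average the $2$-cocycle'') is too vague to count as a proof as stated. None of these is fatal, but the Levi-subgroup argument of the paper sidesteps all of them in a few lines.
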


\begin{proof}
  Let $S_1$ be a maximal connected reductive subgroup of $N$ and let
  $S$ be a maximal connected reductive subgroup of $G$ with $S_1\subset  S$.
  Because $N$ is normal in $G$, the intersection $S\cap N$
    is normal in $S$ and therefore reductive, too.
    Maximality of $S_1$ implies that $S_1$ equals the connected component
    of the neutral element of $S\cap N$, which we denote as
    $(S\cap N)^ 0$. Hence $S_1$ is a normal subgroup of $S$.
    
  Now $S/S_1$ is a reductive group of dimension at most one which admits
  a non-trivial morphism of algebraic groups into the additive group $G_a$.
  Thus $S/S_1$ is trivial, i.e., $S_1=S$.
  Let $V_1$ resp.~$V$ denote the unipotent radical
  (i.e. the maximal normal unipotent subgroup)
  of $N$ resp.~$G$.
  Since $\dim G/N=1$ and $S=S_1$, we have $\dim(V/V_1)=1$.
  In particular $V\not\subset N$. We choose a one-dimensional vector
  subspace $L$ of $Lie(V)$ such that
  $L\not\subset Lie(N)$.
  Now $\exp(L)$ is a one-dimensional unipotent subgroup of $G$ which admits
  a non-trivial morphism of algebraic groups $\exp(L)\to U$.
  Since $\exp(L)$ and $U$ are both one-dimensional and unipotent,
  this morphism is an isomorphism. Therefore it admit an inverse
  which gives us the desired splitting
  of \eqref{uni-seq}.
\end{proof}

\begin{proposition}\label{ga-lift}
  Let $E\to B$ be an algebraic vector bundle over a smooth affine
  variety $B$.

  Then every action of the additive group $G_a$ on $B$ lifts to an
  action on $E$.
\end{proposition}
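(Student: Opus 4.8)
The plan is to reduce the assertion to the construction of a single locally nilpotent ``connection along $v$'', where $v$ denotes the locally nilpotent derivation of $R:=k[B]$ generating the given $G_a$-action, and then to produce such a connection from Theorem~\ref{bass-quillen} together with the unipotence of $G_a$ (Lemma~\ref{split}).

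First I would translate everything into algebra. Let $M:=\Gamma(B,E)$, a finitely generated projective $R$-module. Giving an action of $G_a$ on the total space of $E$ which is linear on the fibres and covers the given action on $B$ is the same as giving a $k$-linear endomorphism $D\colon M\to M$ with $D(rm)=v(r)m+rD(m)$ for all $r\in R,\ m\in M$, which is \emph{locally nilpotent} (for every $m$ one has $D^{n}(m)=0$ for $n\gg0$); the lifted action is then $\rho_{t}=\exp(tD)=\sum_{n\ge0}\tfrac{t^{n}}{n!}D^{n}$. Such a $D$, not yet locally nilpotent, always exists because $B$ is affine: writing $M$ as a direct summand $R^{N}=M\oplus P$ with inclusion $\iota$ and projection $\pi$, and letting $v^{\oplus N}$ act coordinatewise on $R^{N}$, the composite $D_{0}:=\pi\circ v^{\oplus N}\circ\iota$ is a connection along $v$. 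The difficulty is that $D_{0}$ need not be locally nilpotent, so $\exp(tD_{0})$ is only a formal flow.

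Second I would feed in Theorem~\ref{bass-quillen}: it gives an isomorphism $\mu^{*}E\simeq p^{*}E$ of algebraic vector bundles over $G_a\times B$, where $\mu$ is the action map and $p$ the projection. Since the comorphism $\mu^{*}\colon R\to R[t]$ is exactly $\exp(tv)$, passing to sections yields an $R[t]$-module isomorphism $\Theta\colon M[t]\xrightarrow{\ \sim\ }\mu^{*}M$, normalised by composing with an $R$-automorphism of $M$ so that $\Theta\equiv\mathrm{id}\pmod t$. Writing $\Theta(m)=\sum_{n\ge0}\psi_{n}(m)\,t^{n}$ (a \emph{finite} sum, as $\Theta$ is polynomial in $t$), $R[t]$-semilinearity forces $\psi_{0}=\mathrm{id}$ and the higher Leibniz relations $\psi_{n}(rm)=\sum_{i+j=n}\tfrac{v^{i}(r)}{i!}\psi_{j}(m)$, and the family $(\psi_{n})$ is locally finite on $M$. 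A family with these properties comes from a genuine $G_a$-action — equivalently $\psi_{n}=D^{n}/n!$ for a \emph{locally nilpotent} connection $D$ — exactly when it is in addition iterative, $\psi_{i}\circ\psi_{j}=\binom{i+j}{i}\psi_{i+j}$; the family handed to us by Theorem~\ref{bass-quillen} need not be iterative, and the heart of the matter is to correct it to one that is.

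Third — and this is where I expect the real difficulty and where Lemma~\ref{split} enters — I would carry out this correction grade by grade, exploiting that $G_a$ is unipotent. If $\Theta$ is iterative up to order $k$, the failure in order $k+1$ is an $R$-linear obstruction cocycle which can be killed by modifying $\psi_{k+1}$ by an $R$-linear endomorphism of $M$; reducing this to the finite amount of data carried by a fixed finite generating set of $M$ together with the nilpotency bounds of $v$, the obstruction sits in a \emph{finite-dimensional} group, and a correction exists because the associated extension $1\to N\to G\to G_a\to1$ of linear algebraic groups splits by Lemma~\ref{split}. Since each step adds only an $R$-linear term and stabilises after finitely many steps on any given element, local finiteness of $(\psi_{n})$ is preserved; hence $D:=\psi_{1}$ is a locally nilpotent connection along $v$, and $\rho_{t}:=\exp(tD)$ is the desired lift of the $G_a$-action to $E$. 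The main obstacle is exactly this last step: upgrading the merely ``pointwise in $t$'' triviality $\mu_{t}^{*}E\simeq E$ of Theorem~\ref{bass-quillen} to a coherent family constituting a group action, i.e.\ killing the non-abelian coherence obstruction, for which the unipotence of $G_a$ via Lemma~\ref{split} is the essential input; the remaining steps are formal.
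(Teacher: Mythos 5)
Your proposal rests on exactly the two inputs the paper uses --- Theorem~\ref{bass-quillen} to produce lifts of the $\mu_t$ (in your formulation, the normalized trivialization $\mu^*E\simeq p^*E$ and the resulting family $(\psi_n)$), and Lemma~\ref{split} to upgrade these lifts to a genuine action --- and your first two steps are a correct algebraic reformulation of the geometric statement. The divergence is in your third step, and there it has a real problem. The paper does no order-by-order correction at all: it defines $G$ to be the group of fiberwise-linear automorphisms of $E$ covering some $\mu(t)$, notes that Theorem~\ref{bass-quillen} makes $G\to G_a$ surjective, and applies Lemma~\ref{split} to the extension $1\to N\to G\to G_a\to 1$; the section $G_a\to G$ so obtained \emph{is} the lifted action, and its derivative at $t=0$ is your locally nilpotent connection $D$. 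Your step three invokes this very splitting as the reason each correction of $\psi_{k+1}$ exists, which makes the iterativity/cocycle apparatus circular: if the extension splits you are finished without any grading argument, and if you do not yet know it splits, nothing in your step three actually identifies the order-$(k+1)$ obstruction or proves that it vanishes.

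Two concrete points inside that step would also need repair. First, the claim that the obstruction ``sits in a finite-dimensional group'' is unjustified and is in fact the delicate issue: the kernel $N$ is the unit group of $\Gamma(B,\mathrm{End}(E))$ (for trivial $E$ of rank $r\ge 2$ this contains $\GL_r(k[B])$), which is not a finite-dimensional algebraic group; to apply Lemma~\ref{split} one must first cut down to a finite-dimensional algebraic subgroup of $G$ that still surjects onto $G_a$, for instance the algebraic subgroup generated by the image of the morphism $t\mapsto\Phi_t$ coming from the normalized isomorphism $\mu^*E\simeq p^*E$. Second, ``local finiteness is preserved since each step adds only an $R$-linear term'' is not an argument when infinitely many corrections are performed and no uniform bound is given. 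The economical fix is to keep your steps one and two and replace step three by the paper's direct application of Lemma~\ref{split} to the extension of $G_a$ by the gauge group.
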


\begin{proof}
  For each $t\in G_a$, let $\mu(t)$ denote the corresponding
  automorphism of $B$. Since $\mu(t)^*E\simeq E$ (Theorem~\ref{bass-quillen})
  we may lift $\mu(t)$ to an automorphism of $E$.

  Let $G$ denote the group of all vector bundle automorphisms of $E$
  such that the respective automorphism of the base manifold $B$ is
  contained in $\mu(G_a)$.
  In other words $G$ consists of all variety automorphisms $\phi$ of $E$
  which are fiberwise linear and such that there exists a $t$ with
  $\pi(\phi(x))=\mu(t)(\pi(x))$ for all $x\in E$.

  Now there is a natural surjective morphism of algebraic groups
  from $G$ to $G_a$ and lemma~\ref{split} implies that the $G_a$-action
  on $B$ lifts to an action on $E$ via an embedding of $G_a$ into $G$.
\end{proof}

\begin{theorem}\label{vb-flexible}
  Let $E\to B$ be an algebraic vector bundle. Assume that $B$ is a flexible
  smooth affine variety.

  Then $E$ is flexible, too.
\end{theorem}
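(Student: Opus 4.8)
The plan is to check flexibility of $E$ pointwise. Fix $e\in E$ and put $b=\pi(e)\in B$; we may assume $r=\rank E\ge 1$, the rank-$0$ case being trivial since then $E=B$. Differentiating the projection gives a short exact sequence
\[
0\longrightarrow V_e\longrightarrow T_eE\xrightarrow{\ d\pi\ }T_bB\longrightarrow 0 ,
\]
where $V_e=T_e(E_b)$ is the tangent space at $e$ to the fibre $E_b=\pi^{-1}(b)\cong\A^r$. It therefore suffices to exhibit one-parameter unipotent subgroups of $\aut_{alg}(E)$ whose orbit tangent vectors at $e$, taken together, both span $V_e$ and map onto a spanning set of $T_bB$ under $d\pi$: a spanning family for $T_eE$ is then obtained by combining the two, since any $w\in T_eE$ differs from a suitable combination of the ``horizontal'' generators by an element of $\ker d\pi=V_e$. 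Note that $E$ is an affine variety, being the total space of a vector bundle over the affine variety $B$.

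For the vertical directions I would use fibrewise translations. Because $B$ is affine, the $\O_B$-module $\Gamma(B,E)$ generates $E$, so one may pick global sections $s_1,\dots,s_r\in\Gamma(B,E)$ with $s_1(b),\dots,s_r(b)$ a basis of $E_b=V_e$. For each $s\in\Gamma(B,E)$ the formula $\tau^s_t\colon v\mapsto v+t\,s(\pi(v))$ defines an algebraic $G_a$-action on $E$; in local fibre coordinates $y_1,\dots,y_r$ it is the flow of the locally nilpotent derivation $\sum_j s_j(x)\dd{y_j}$, and its orbit through $e$ has tangent vector $\tfrac{d}{dt}\big|_{t=0}\tau^s_t(e)=s(b)\in V_e$. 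Running $s$ through $s_1,\dots,s_r$ thus yields one-parameter unipotent subgroups whose orbit tangents at $e$ span $V_e$.

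For the horizontal directions, flexibility of $B$ provides, for a given $v\in T_bB$, finitely many locally nilpotent derivations of $k[B]$ — equivalently one-parameter unipotent subgroups $H_1,\dots,H_m\subset\aut_{alg}(B)$ — whose orbit tangent vectors at $b$ span $T_bB$. By Proposition~\ref{ga-lift}, each associated $G_a$-action on $B$ lifts to a $G_a$-action on $E$ by vector bundle automorphisms; let $\widehat H_i\subset\aut_{alg}(E)$ be the resulting one-parameter unipotent subgroup. Since the lift is $\pi$-equivariant, $d\pi$ carries the orbit tangent of $\widehat H_i$ at $e$ to the orbit tangent of $H_i$ at $b$; hence the images under $d\pi$ of the orbit tangents of $\widehat H_1,\dots,\widehat H_m$ at $e$ span $T_bB$. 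Combining these with the translation subgroups of the previous step produces a family of one-parameter unipotent subgroups whose orbit tangents at $e$ span all of $T_eE$, and since $e$ was arbitrary, $E$ is flexible.

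The genuine input has already been isolated in Theorem~\ref{bass-quillen} and Proposition~\ref{ga-lift} (the Bass--Quillen ingredient being exactly what makes $\pi$-equivariant lifts of $G_a$-actions available), so the remaining work is assembly. The step that I expect to need the most care is the vertical one: verifying that fibrewise translations by global sections are honest algebraic $G_a$-actions and that, thanks to global generation of a vector bundle over an affine base, they already span the whole of $V_e$. One should also keep in mind that in characteristic zero a nontrivial algebraic $G_a$-action is automatically a one-parameter unipotent subgroup, so both the translation actions and the lifted actions legitimately fall under the definition of flexibility.
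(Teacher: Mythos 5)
Your proof is correct, but it takes a genuinely different route from the paper's for the vertical directions. The paper also uses Proposition~\ref{ga-lift} (hence Lindel's solution of the Bass--Quillen conjecture) to get surjectivity of $\pi_*$ on the orbit-tangent directions, but it does not construct vertical $G_a$-actions by hand: instead it invokes a result of Gaifullin and Shafarevich on parabolic non-hyperbolic $G_m$-spaces (applied to the natural $\C^*$-action on $E$) to conclude that every point of the zero section is flexible, and then propagates flexibility to all of $E$ by observing that the flexible locus is open, automorphism-invariant, and the only open $\C^*$-invariant neighbourhood of the zero section is $E$ itself. Your replacement of this step by fibrewise translations $v\mapsto v+t\,s(\pi(v))$ for global sections $s$ spanning each fibre is sound: these are honest algebraic $G_a$-actions (the corresponding derivation is contraction with $s$ on $\bigoplus_d\Gamma(B,S^dE^*)$, which lowers fibre degree and is therefore locally nilpotent), and global generation over an affine base gives sections whose values span $V_e$. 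What your approach buys is a self-contained, pointwise argument that avoids the external citation and treats all points of $E$ uniformly rather than only the zero section; what the paper's approach buys is brevity, at the cost of importing the parabolic $G_m$-space machinery. The linear-algebra assembly (spanning $\ker d\pi$ plus surjecting onto $T_bB$ implies spanning $T_eE$) and the $\pi$-equivariance of the lifted orbit tangents are both handled correctly in your write-up.
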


\begin{proof}
  For $p\in E$ let $U_p$ denote the set of all tangent vectors $v\in T_pE$
  for which there exists a corresponding $G_a$-action.
  Recall that $p$ is a {\em flexible point}  if $U_p=T_pE$ and
  that $E$ is flexible if every $p\in E$ is a flexible point.

  Since $B$ is flexible, Proposition~\ref{ga-lift} implies that
  $\pi_*:U_p\to T_{\pi(p)}B$ is surjective for every $p\in E$.

  Next we employ the work of \cite{GaifullinShafarevic2019}.
  We consider the natural $\C^*$-action on the vector bundle $E$.
  In the language used in \cite{GaifullinShafarevic2019}  $E$ is a
  {\em parabolic non-hyperbolic} $G_m$-space.
  Using Proposition 3 in \S3 of \cite{GaifullinShafarevic2019}  we may
  conclude that every point $p$  in the zero-section of $\pi:E\to B$
  is a flexible point in $E$.

  Now we observe that the set of flexible points is open and invariant
  under all algebraic automorphisms of $E$.  The automorphisms of $E$ include
  fiberwise multiplication with scalars $\lambda\in \C^*$.
  Evidently the only open neighborhood of the zero-section which is
  invariant under this $\C^*$-action is $E$ itself.

  Thus $E$ is flexible at every point, i.e.,
  $E$ is a flexible variety.
\end{proof}

The following is an additional result about existence of vector fields.

\begin{proposition}
  Let $X$ be a smooth affine variety with a non-trivial action of
  $U=(\C,+)$. Let $\Theta$ denote the corresponding vector field.

  Then there exists a $U$-invariant  Zariski open subset $\Omega\subset X$
  such that for every $p\in\Omega$ and $v\in T_pX$ there is a vector field
  $W$ on $X$ with $[W,\Theta]=0$ and $W_p=v$.
\end{proposition}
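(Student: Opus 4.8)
The statement asserts that on a smooth affine variety $X$ with a nontrivial $U = (\C,+)$-action with LND-vector field $\Theta$, there is a $U$-invariant Zariski open $\Omega$ so that at every $p \in \Omega$ and every $v \in T_pX$ there is a vector field $W$ with $[W,\Theta] = 0$ and $W_p = v$. The natural strategy is to work on the open set where the action is ``as free and trivial as possible'', i.e.\ over a local slice, and to build $W$ as a pullback of an arbitrary vector field on the slice together with a correction term making it commute with $\Theta$. First I would invoke the standard structure theory of locally nilpotent derivations / free $G_a$-actions on affine varieties: there is a nonempty $U$-invariant principal Zariski open subset $\Omega \subset X$ on which the action is free and the quotient map $q : \Omega \to \Omega/U =: S$ is a trivial $U$-bundle, $\Omega \cong S \times \A^1$ with $\Theta = \dd t$ in the coordinate $t$ on the $\A^1$-factor (this is the ``slice theorem'' for LNDs; it holds after inverting a suitable invariant function, e.g.\ a local slice $s$ with $\Theta(s)$ invertible). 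On such an $\Omega$ the claim becomes completely transparent.

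\medskip

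\noindent\textbf{Key steps.} Work on $\Omega \cong S \times \A^1$ with coordinate $t$ on the $\A^1$ factor and $\Theta = \partial/\partial t$. Step one: given $p = (p_0, t_0)$ and $v \in T_p\Omega$, decompose $v = v_S + c\,\partial/\partial t$ with $v_S \in T_{p_0}S$ and $c \in \C$. Step two: since $S$ is affine, pick any algebraic vector field $W_S$ on $S$ with $W_S(p_0) = v_S$, and pull it back to $\Omega$ by $q$; the pullback $\tilde W_S$ is $t$-independent, hence $[\tilde W_S, \Theta] = 0$, and it realizes the ``horizontal'' part of $v$. Step three: to get the $\partial/\partial t$-component right without destroying the bracket condition, add $h \cdot \partial/\partial t$ where $h$ is a function on $\Omega$ with $\Theta(h) = 0$ (i.e.\ $h = q^* h_0$ for $h_0 \in \C[S]$) and $h_0(p_0) = c$; then $[h\,\partial/\partial t, \Theta] = -\Theta(h)\partial/\partial t = 0$. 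Set $W = \tilde W_S + h\,\partial/\partial t$; by construction $[W,\Theta] = 0$ and $W_p = v$. Step four: finally, one must pass from a vector field defined on the open set $\Omega$ to one defined on all of $X$: since $\Omega$ is a principal open subset $\{f \neq 0\}$ for some invariant $f \in \C[X]$, multiplying $W$ by a sufficiently high power $f^N$ clears the denominators and yields a global algebraic vector field on $X$; because $f$ is $\Theta$-invariant, $[f^N W, \Theta] = f^N[W,\Theta] = 0$ still holds, and at points of $\Omega$ the new field is $f(p)^N v$, which spans the same line — so after rescaling $v$ we realize every tangent vector at every $p \in \Omega$. (If one wants $W_p = v$ on the nose rather than up to scalar, shrink $\Omega$ further to $\{f = 1\}$-type loci or simply absorb the nonzero constant $f(p)^N$.)

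\medskip

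\noindent\textbf{Main obstacle.} The only genuinely nontrivial input is the slice/triviality statement for the $U$-action: that there is a $U$-invariant dense Zariski open subset on which $\Theta$ has a slice and the action is a trivial line bundle. This is a classical fact about locally nilpotent derivations on affine varieties — one takes any $a \in \C[X]$ with $\Theta(a) \not\equiv 0$ of minimal ``$\Theta$-degree'', so that $b := \Theta(a)$ satisfies $\Theta(b) = 0$, and then on the localization $\C[X]_b$ the element $s := a/b$ is a slice, $\Theta(s) = 1$, which exhibits $\C[X]_b \cong \C[X]_b^U[s]$; the corresponding $\Omega = \{b \neq 0\}$ is $U$-invariant since $b$ is invariant. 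Everything after that is the elementary bracket bookkeeping sketched above, and the descent from $\Omega$ to $X$ is the usual ``multiply by a power of the invariant defining function'' trick, which is harmless precisely because that function is $\Theta$-invariant.
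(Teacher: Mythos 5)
Your proof is correct and follows essentially the same route as the paper: produce a $U$-invariant principal open set on which the action trivializes as a product with $\A^1$, decompose a tangent vector into a slice component and a $\Theta$-component, realize each by a vector field commuting with $\Theta$, and clear denominators with a power of the invariant defining function. The only (minor) differences are that you obtain the invariant open set directly from the local slice theorem for LNDs, whereas the paper uses Rosenlicht's theorem plus a separate unipotency argument to find an invariant function vanishing on the complement, and that you explicitly address the rescaling by $f(p)^N$ after clearing denominators, a point the paper glosses over.
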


\begin{proof}
  By a result of Rosenlicht (see \cite{PopovVinberg}, Theorem~2.3)
  there is a $U$-invariant Zariski open subset
  $\Omega_0\subset X$ with $\Omega_0\simeq \C\times M_0$ such that the
  $U$-action on $\Omega_0$ is simply translation in the first factor.
  Let $I\subset \C[X]$ be the ideal of functions vanishing on
  $X\setminus\Omega_0$. Now $U$ is unipotent and
  acts linearly on $\C[X]$, stabilizing $I$.
  Therefore there is a non-zero $U$-invariant element $f$
  in $I$. Define $\Omega=\{x\in\Omega_0:f(x)\ne 0\}$.
  By construction, $\Omega$ is an open invariant subset of $\Omega_0$.
  Therefore $\Omega\simeq\C\times M$
  for some Zariski open subset $M\subset M_0$.
  Choose $p\in \Omega$, $v\in T_pX$.
  Then $v=c(\Theta_p)+w$ with $w\in T_pM$.
  We choose a vector field $W_0$ on $M$ with $(W_0)_p=w$.
  Now $W_1=c\Theta+W_0$ is a vector field on $\Omega$ with
  $(W_1)_p=v$ and $[W_1,\Theta]=0$. It extends to a rational
  vector field on $X$. Recall that $f$ is a $U$-invariant function which
  vanishes outside $\Omega$. Therefore $W=f^NW_1$ for $N$ sufficiently large
  is a regular vector field with $[W,\Theta]=0$ and $W_p=v$.
\end{proof}

\section{Tame sets}
The notion of {\em tame set} was first introduced by Rosay and Rudin \cite{RosayRudin1988} as they were studying the properties of the group of holomorphic automorphisms of complex Euclidean spaces.

\begin{definition}[\cite{RosayRudin1988},{Def.~3.3}]
\label{def-tame-RR}
Let $e_1$ be the first standard basis vector of $\CC^n$.
A set $A \subset \CC^n$ is called \emph{tame}
(in the sense of Rosay and Rudin) if there exists an $F \in \aut(\CC^n)$ such that $F(A) = \NN \cdot e_1$.
It is very tame if we can choose $F \in \aut_1(\CC^n)$ to be volume preserving.
\end{definition}

In recent years, this notion was generalized to arbitrary complex manifolds. The definition of {\em strongly tame set} is due to Andrist and the first named author \cite{AndristUgolini2019}; {\em weakly tame sets} were introduced by the second named author \cite{Winkelmann2019}.

\begin{definition}[Andrist, Ugolini]
Let $X$ be a complex manifold and let $G \subseteq \aut(X)$ be a subgroup of its group of holomorphic automorphisms. We call a closed discrete infinite set $A \subset X$ a \emph{strongly $G$-tame set} if for every injective mapping $f \colon A \to A$ there exists a holomorphic automorphism $F \in G$ such that $F|_{A} = f$. If $G = \aut(X)$, we only say {\em strongly tame set}.
\end{definition}

\begin{definition}[Winkelmann]
Let $X$ be a complex manifold.
An infinite discrete
subset $D$ is called {\em weakly tame}
if for every exhaustion function $\rho \colon X \to \RR$
and every function $\zeta \colon D \to \RR$
there exists an automorphism $\Phi$ of $X$
such that $\rho(\Phi(x)) \geq \zeta(x)$ for all $x \in D$.
\end{definition}

Any strongly tame set is automatically weakly tame, hence their names. When $X = \C^n, \ n >1$, both notions coincide with the classical definition by Rosay and Rudin.

It is clear that the existence of a tame set (strong or weak) requires the group of holomorphic automorphisms to be somewhat large. In this section, we provide existence results for total spaces of holomorphic vector bundles.

\begin{proposition}
Let $E \to B$ be a holomorphic vector bundle over a Stein manifold $B$. Then $E$ admits a weakly tame set.
\end{proposition}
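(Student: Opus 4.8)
The plan is to produce a weakly tame set consisting of a single point in each member of a discrete family of fibres, and to move it around by the fibrewise translation automorphisms of $E$ coming from global holomorphic sections of $E$ (these are precisely the time-one flows of the complete vector fields in $D_{-1}\simeq\Gamma(B,E)$ discussed in the ``Generalities'' section, so they are honest automorphisms of the complex manifold $E$). The whole argument is then an application of Cartan's Theorem~B on the Stein base.

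First I would fix a closed discrete infinite subset $Z=\{b_j:j\in\NN\}\subset B$; such a set exists because $B$, being a positive-dimensional Stein manifold, is non-compact (if $B$ is a finite set of points, $E$ is a finite disjoint union of Euclidean spaces and one falls back on the classical theory). In each fibre $E_{b_j}=\pi^{-1}(b_j)\cong\CC^{r}$, $r=\rank(E)$, choose a point $x_j$ and set $D=\{x_j:j\in\NN\}$. A short local argument shows $D$ is closed and discrete in $E$: over a trivialising neighbourhood $U$ of $b_{j_0}$ with $U\cap Z=\{b_{j_0}\}$ the preimage $\pi^{-1}(U)$ meets $D$ at most in $x_{j_0}$, and over $B\setminus Z$ there is nothing to check.

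The core step is to verify weak tameness directly. Given an exhaustion function $\rho\colon E\to\RR$ and an arbitrary function $\zeta\colon D\to\RR$: for each $j$ the fibre $E_{b_j}$ is a closed non-compact submanifold of $E$, so $\rho|_{E_{b_j}}$ is still an exhaustion of $E_{b_j}$ and in particular unbounded; pick $z_j\in E_{b_j}$ with $\rho(z_j)\ge\zeta(x_j)$, and note $z_j-x_j$ is a well-defined vector in $E_{b_j}$. Now invoke Cartan's Theorem~B for the coherent sheaf $\mathcal I_Z\otimes\sE$ on the Stein manifold $B$ (here $\sE$ is the sheaf of sections of $E$ and $\mathcal I_Z$ the ideal sheaf of $Z$): from $H^1(B,\mathcal I_Z\otimes\sE)=0$ and the exact sequence $0\to\mathcal I_Z\otimes\sE\to\sE\to\sE/\mathcal I_Z\sE\to 0$ the evaluation map $\Gamma(B,E)\to\prod_j E_{b_j}$ is surjective, so there is $s\in\Gamma(B,E)$ with $s(b_j)=z_j-x_j$ for all $j$. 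Finally the fibrewise translation $\tau_s\colon E\to E$, $\tau_s(e)=e+s(\pi(e))$, is a holomorphic automorphism of $E$ (inverse $\tau_{-s}$; in a local trivialisation $E|_U\cong U\times\CC^r$ it reads $(u,v)\mapsto(u,v+s(u))$), and $\tau_s(x_j)=z_j$, whence $\rho(\tau_s(x_j))=\rho(z_j)\ge\zeta(x_j)$ for every $j$. That is exactly the defining property of a weakly tame set.

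I do not anticipate a genuine obstacle: the statement is an elementary consequence of interpolation theory on Stein manifolds. The only non-formal ingredient is the simultaneous prescription of the fibre values of a global section along the infinite discrete set $Z$, which is precisely what Cartan's Theorem~B supplies; the remaining points — that $D$ is closed and discrete in $E$, and that the translations $\tau_s$ are biholomorphic — are routine. The one thing to be slightly careful about is excluding or separately treating the degenerate low-dimensional cases (e.g.\ $B$ a point and $\rank(E)\le 1$), where there may be too few automorphisms.
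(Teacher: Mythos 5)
Your argument is correct, and it follows the same skeleton as the paper's proof (a discrete set lying over a discrete subset of $B$, Stein interpolation along that subset, then a fibrewise affine automorphism of $E$ to push the points up the exhaustion), but with a different one-parameter family of automorphisms. The paper chooses the points $a_n$ to be \emph{nonzero} in their fibres and applies the time-one flow of $h\Theta$, i.e.\ the fibrewise dilations $e \mapsto e^{h(\pi(e))}\cdot e$, where $h$ is a scalar holomorphic function interpolated on the discrete set $\pi(A)\subset B$; the implicit point there is the same one you make explicit, namely that $\rho$ restricted to a closed fibre is an exhaustion of that fibre, so a sufficiently large dilation of a nonzero vector raises $\rho$ above $\zeta$. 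You instead use fibrewise translations $\tau_s$ by a global section $s$, produced via $H^1(B,\mathcal I_Z\otimes\sE)=0$. Your route requires interpolating sections of $E$ rather than scalar functions (a marginally heavier but equally standard application of Theorem B), and in exchange dispenses with the nonvanishing condition on the chosen points; it also works verbatim for line bundles. Both arguments tacitly assume $\rank E\ge 1$ (for $\rank E=0$ the statement degenerates to the base itself), and your closing remark correctly flags this. There is no gap.
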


See \cite{Winkelmann2019}, Proposition 8.4 for a closely related result.

\begin{proof}
   Choose a closed discrete sequence $A = \{a_n\}_{n \in \N} \subset E$ such that $\pi(a_n) \neq \pi(a_m)$ for $n \neq m$,  $a_n \neq 0 \in E_{\pi(a_n)}$, and such that $\pi(A)$ is also closed and discrete in $B$.
  
  Fix an exhaustion function $\rho$. Given a function $\zeta \colon A \to \RR$, we can find a holomorphic function $h$ on $B$ such that $\rho (e^{h(\pi(a_n))} \cdot a_n) > \zeta(a_n)$, because $A$ is a discrete subset in the
  Stein manifold $B$.

Let $\Theta$ be the Euler vector field, then the time-1 flow $\Phi$ of the vector field $h\Theta$ is such that $\rho(\Phi(x)) \geq \zeta(x)$ for all $x \in A$ i.e. $A$ is weakly tame.
\end{proof}

As we saw, the existence of a weakly tame set in the total space of a vector bundle over a Stein manifold does not require any additional hypothesis. The picture is quite different for strongly tame sets.

Given a complex manifold $X$, we denote by $\aut(X)^0$ the connected component of the identity in $\aut(X)$.

\begin{proposition}
Let $E \to B$ be a holomorphic vector bundle over a Stein manifold $B$ and assume that $B$ admits a strongly $\aut(B)^0$-tame set. Then, $E$ admits a strongly tame set.
\end{proposition}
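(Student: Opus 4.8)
The plan is to lift a strongly $\aut(B)^0$-tame set from the base to a strongly tame set in $E$, using the Euler vector field $\Theta$ to control the fiber coordinate. First I would choose a strongly $\aut(B)^0$-tame set $A_0=\{b_n\}_{n\in\N}\subset B$ together with a closed discrete set $A=\{a_n\}_{n\in\N}\subset E$ such that $\pi(a_n)=b_n$ and $a_n\neq 0\in E_{b_n}$ for every $n$; since $A_0$ is closed and discrete in the Stein manifold $B$, such an $A$ exists and is closed and discrete in $E$. The point of insisting $a_n\neq 0$ is that the $\C^*$-orbit through $a_n$ is one-dimensional, so a suitable flow of $h\Theta$ can move $a_n$ to any other nonzero point of its fiber $E_{b_n}$.

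Now let $f\colon A\to A$ be an arbitrary injective map. It induces an injective map $f_0\colon A_0\to A_0$ via $f_0(b_n)=\pi(f(a_n))$; strong $\aut(B)^0$-tameness of $A_0$ gives an automorphism $\psi\in\aut(B)^0$ with $\psi|_{A_0}=f_0$. The key step is to lift $\psi$ to an automorphism $\Psi$ of $E$. Because $\psi$ lies in the identity component, it is a finite composition of time-one flows of (not necessarily complete) holomorphic vector fields on $B$; lifting each such vector field horizontally with respect to a holomorphic connection on $E$ (which exists by Cartan's Theorem B, as in the proposition preceding this section) and composing the resulting flows yields an automorphism $\Psi$ of $E$ covering $\psi$ — more precisely, one should phrase this as: $\aut(B)^0$ is generated by such flows, and each horizontal lift produces an automorphism of $E$ over the corresponding automorphism of $B$, so the subgroup of $\aut(E)$ consisting of $\pi$-compatible automorphisms covering an element of $\aut(B)^0$ surjects onto $\aut(B)^0$. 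Hence there is a $\pi$-compatible $\Psi\in\aut(E)$ with $\pi\circ\Psi=\psi\circ\pi$.

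After applying $\Psi$, the set $\{\Psi(a_n)\}$ satisfies $\pi(\Psi(a_n))=b_{\sigma(n)}$ where $\sigma$ is the index permutation with $f(a_n)=a_{\sigma(n)}$, so $\Psi(a_n)$ and $f(a_n)=a_{\sigma(n)}$ lie in the \emph{same} fiber $E_{b_{\sigma(n)}}$ and are both nonzero. It then remains to produce an automorphism of $E$ fixing each fiber and carrying $\Psi(a_n)$ to $a_{\sigma(n)}$. For each $n$ there is a unique $\lambda_n\in\C^*$ with $\lambda_n\cdot\Psi(a_n)=a_{\sigma(n)}$ in the fiber $E_{b_{\sigma(n)}}$; choose $\mu_n\in\C$ with $e^{\mu_n}=\lambda_n$. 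Since $\{b_{\sigma(n)}\}=A_0$ is discrete in the Stein manifold $B$, interpolation gives a holomorphic function $h$ on $B$ with $h(b_{\sigma(n)})=\mu_n$ for all $n$, and the time-one flow $\Phi$ of $h\Theta$ is a (complete) automorphism of $E$ acting on the fiber over $b$ by multiplication with $e^{h(b)}$. Then $F=\Phi\circ\Psi\in\aut(E)$ satisfies $F(a_n)=a_{\sigma(n)}=f(a_n)$ for all $n$, so $A$ is strongly tame.

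The main obstacle is the second paragraph: making precise that an element of $\aut(B)^0$ lifts to a $\pi$-compatible automorphism of $E$. One must be careful that the horizontal lifts of the generating vector fields need not be complete, so one works with local flows / a finite composition representing $\psi$, and checks that the composition of the horizontal lifts is globally defined on $E$ (it is, since it covers the globally defined $\psi$ and the connection gives a unique horizontal lift along each path in $B$). Everything else is routine: the existence of the interpolating holomorphic functions uses Cartan's Theorem B on the Stein manifold $B$, and the completeness of the flow of $h\Theta$ is immediate because it integrates the $\C^*$-action fiberwise.
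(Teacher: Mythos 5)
Your overall strategy (lift the tame set from $B$, realize $f$ by composing a lift of a base automorphism with a fiberwise correction) is reasonable, but the two key steps both have genuine gaps. First, the lifting of $\psi\in\aut(B)^0$: you assert that $\psi$ is a finite composition of time-one flows of holomorphic vector fields on $B$ and that horizontal lifts of these produce an automorphism of $E$. Neither half is justified. A non-complete vector field does not have a globally defined time-one map at all, and there is no general reason why an element of the identity component of $\aut(B)$ (compact-open topology) should factor as a composition of flows of holomorphic vector fields. The actual obstruction to lifting $\psi$ is whether $\psi^*E\simeq E$ as holomorphic bundles; the paper resolves this by noting that $\psi\simeq\id$ forces $\psi^*E\simeq E$ \emph{topologically}, and then invoking Grauert's Oka principle (topological $=$ holomorphic classification over Stein manifolds) to get a holomorphic isomorphism $F\colon E\to\psi^*E$, so that $\tilde\psi\circ F$ is the desired lift. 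Your connection-based construction does not substitute for this.

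Second, the fiberwise correction fails for $\rank E\ge 2$: you claim there is $\lambda_n\in\C^*$ with $\lambda_n\cdot\Psi(a_n)=a_{\sigma(n)}$, but two nonzero vectors in a fiber of rank $\ge 2$ need not be proportional, and the flow of $h\Theta$ only rescales each vector along its own line through the origin. This step is salvageable (use fiberwise translations, i.e.\ the complete vertical fields of weight $-1$ given by a section $s$ of $E$ interpolating $a_{\sigma(n)}-\Psi(a_n)$ on the discrete set $\{b_{\sigma(n)}\}$), but the cleaner route, which the paper takes, is to place the $a_n$ on the \emph{zero section}: then any vector bundle morphism covering $\psi$ automatically sends $a_n$ to $f(a_n)$ and no fiberwise correction is needed at all. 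Your insistence on $a_n\ne 0$ creates work that the argument then cannot complete as written.
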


\begin{proof}
Let $A'  \subset B$ be a strongly $\aut(B)^0$-tame set. Denote by $A = \{a_n\}_{n \in \N}\subset E$ the image of $A'$ under the zero section of $E \to B$.

Let $f \colon A \to A$ be an injective self-map. Since $A'$ is $\aut(B)^0$-tame, there exists an automorphism $\Phi \in \aut(B)^0$ such that $\Phi(\pi (a_n)) = \pi (f(a_n))$. The map $\Phi$ induces a commutative diagram

\[ \begin{tikzcd}
\Phi^\ast E \arrow{r}{\tilde{\Phi}} \arrow[swap]{d}{} & E \arrow{d}{} \\%
B \arrow{r}{\Phi}& B
\end{tikzcd}
\]

Since the map $\Phi$ is connected to the identity, $E$ and $\Phi^\ast E$ are topologically isomorphic as vector bundles over $B$. As the topological and holomorphic classification of vector bundles over Stein manifolds is equivalent, there exists a holomorphic vector bundle isomorphism $F \colon E \to  \Phi^\ast E$ (covering the identity).

The map $H = \tilde{\Phi} \circ F$ satisfies $H(a_n) = f(a_n)$, hence $A$ is strongly tame.

\end{proof}

It is not clear whether reducing ourselves to the connected component of the identity is particularly restrictive. When the base space $B$ is an affine homogeneous space of a linear complex Lie group, the existence of strongly $\aut(B)^0$-tame sets is proved in \cite{AndristUgolini2022X}.

\bibliography{Biblio}
\bibliographystyle{alpha} 

\end{document}